\def\Z{\mathbb{Z}}
\def\Q{\mathbb{Q}}
\def\R{\mathbb{R}}
\def\F{\mathbb{F}}
\def\C{\mathbb{C}}
\def\L#1{{#1}[t^{\pm 1}]}
\def\O#1{#1(t)}
\def\LF{\L{\F}}
\def\LR{\L{\R}}
\def\LC{\L{\C}}
\def\OC{\O{\C}}
\def\ol#1{\overline{#1}}
\def\makeithash#1{#1^\#}
\def\makeithashT#1{#1^{\#T}}
\newcommand{\id}{\operatorname{id}}
\newcommand{\sign}{\operatorname{sign}}
\newcommand{\im}{\operatorname{im}}
\newcommand{\PD}{\operatorname{PD}}
\DeclareMathOperator{\lk}{lk}
\DeclareMathOperator{\ord}{ord}
\DeclareMathOperator{\res}{\operatorname{res}}
\DeclareMathOperator{\coker}{coker}
\DeclareMathOperator{\Hom}{Hom}
\DeclareMathOperator{\ev}{ev}
\DeclareMathOperator{\ind}{\operatorname{ind}}
\DeclareMathOperator{\Bl}{Bl}
\def\op{\operatorname}
\newcommand{\bsm}{\left(\begin{smallmatrix}}
    \newcommand{\esm}{\end{smallmatrix}\right)}
\newtheorem{theorem}{Theorem}[section]
\newtheorem{corollary}[theorem]{Corollary}
\newtheorem{lemma}[theorem]{Lemma}
\newtheorem{proposition}[theorem]{Proposition}
\theoremstyle{definition}
\newtheorem{definition}[theorem]{Definition}
\newtheorem{example}[theorem]{Example}
\theoremstyle{remark}
\newtheorem{remark}[theorem]{Remark}
\newtheorem{construction}[theorem]{Construction}
\theoremstyle{claim}
\newtheorem*{claim*}{Claim}
\newcommand{\smargin}[1]{\marginpar{\tiny{#1}}}
\numberwithin{equation}{section}
\title[Twisted Blanchfield pairings and twisted signatures II]{Twisted Blanchfield pairings and twisted signatures II: Relation to Casson-Gordon invariants}
\author{Maciej Borodzik}
\address{Institute of Mathematics, University of Warsaw, ul. Banacha 2, 02-097 Warsaw, Poland}
\email{mcboro@mimuw.edu.pl}
\author{Anthony Conway}
\address{Massachusetts Institute of Technology, Cambridge MA 02139}
\email{anthonyyconway@gmail.com}
\author{Wojciech Politarczyk}
\address{Institute of Mathematics, University of Warsaw, ul. Banacha 2, 02-097 Warsaw, Poland.}
\email{wpolitarczyk@mimuw.edu.pl}
\dedicatory{This paper is dedicated to the memory of Andrew Ranicki}
\begin{document}
\begin{abstract}
  This paper studies twisted signature invariants and twisted linking forms, with a view towards obstructions to knot concordance.
  Given a knot $K$ and a representation~$\rho$ of the knot group, we define a twisted signature function~$\sigma_{K,\rho} \colon S^1 \to \Z$.
  This invariant satisfies many of the same algebraic properties as the classical Levine-Tristram signature~$\sigma_K$.
  When the representation is abelian,~$\sigma_{K,\rho}$ recovers~$\sigma_K$, while for appropriate metabelian representations,~$\sigma_{K,\rho}$ is closely related to the Casson-Gordon invariants.
  Additionally, we prove satellite formulas for~$\sigma_{K,\rho}$ and for twisted Blanchfield forms.
\end{abstract}

\maketitle

\section{Introduction}

For a knot $K$, several invariants such as the Alexander polynomial $\Delta_K$, the Levine--Tristram signature
$\sigma_K \colon S^1 \to \Z$ and the Blanchfield form $\Bl(K)$ can be computed from homological data of
the infinite cyclic cover of the knot complement $S^3\setminus K$.
To access information about $K$ beyond the abelian covers of $S^3\setminus K$, the idea of twisted invariants is to additionally keep track of a representation of the group $\pi_1(S^3 \setminus K).$

In the case of polynomial invariants, the resulting theory of twisted Alexander polynomials, originally due to X.S. Lin~\cite{Lin} has been thoroughly developed, and we refer to~\cite{FriedlVidussiSurvey} for a survey of the vast literature on the subject.
The present article aims to further develop the theory of twisted signatures and twisted Blanchfield forms.
Before delving into the theory, we provide some motivation from knot concordance.

\subsection{Knot concordance}

A knot~$K$ is called \emph{slice} if it bounds a locally flat embedded disc in~$D^4$~\cite{FoxMilnor}.
The first obstructions to sliceness date back to Fox and Milnor’s original paper on the subject and involve the Alexander polynomial~$\Delta_K$.
Later work of Levine~\cite{LevineInvariants} describes the algebraic concordance group, which roughly speaking, encapsulates the obstructions to sliceness that come from the Seifert matrix, including~$\Delta_K$ and the piecewise constant Levine-Tristram signature 
$$ \sigma_K \colon S^1 \to \Z.~$$
Both~$\Delta_K$ and~$\sigma_K$ (and in fact the entire algebraic concordance class of~$K$) can be recovered from the so-called \emph{Blanchfield form}, a non-singular, sesquilinear, Hermitian form on the Alexander module $H_1(X_K;\Z[t^{\pm 1}])$ of $K$:
$$ \Bl(K) \colon H_1(X_K;\Z[t^{\pm 1}]) \times H_1(X_K;\Z[t^{\pm 1}]) \to \Q(t)/\Z[t^{\pm 1}].$$
A dozen years after Levine’s work, Casson and Gordon developed further signature invariants to obstruct certain algebraically slice knots from being slice~\cite{CassonGordon2}.
To a knot $K$ and a prime power order character~$\chi \colon H_1(\Sigma_n(K)) \to \C^\times$ (here $\Sigma_n(K)$ denotes the $n$-fold branched cover of $K$), their invariant takes the form of a Witt class
$$\tau(K,\chi) \in W(\C(t)) \otimes \Q.$$
Due to its 4-dimensional nature, this invariant is notoriously difficult to compute.
In fact, for~$\omega \in S^1$, even the signatures~$\sign_\omega^{av}(\tau(K,\chi))$, obtained by taking the (averaged) signature at $t=\omega$ of a matrix representative of $\tau(K,\chi)$, remain difficult to approach in general.
For instance, while~$\sign_1^{av}(\tau(K,\chi))$ can be computed for genus one knots~\cite[Theorem 3.5]{GilmerSliceKnots}, in general it can only be estimated in terms of simpler invariants; 
see e.g.~\cite[Theorem 3]{CassonGordon2} and~\cite[Theorem~3.4]{GilmerSliceKnots}.
While applications of Casson-Gordon theory are numerous, 
our aim is to develop new methods to calculate $\sign_\omega^{av}(\tau(K,\chi))$ and, following~\cite{HeddenKirkLivingston}, to apply them to study the linear independence of algebraic knots in the knot concordance~\cite{ConwayKimPolitarczyk}.

Returning to the broader picture, additional information from the Witt class~$\tau(K,\chi)$ was later made accessible by Kirk and Livingston, by means of twisted Alexander polynomials~\cite{KirkLivingston}.
As in the abelian setting, efforts to encompass both signature and polynomial obstructions, then led Miller and Powell~\cite{MillerPowell} to define a metabelian Blanchfield form
$$ \Bl_{\alpha(n,\chi)}(K) \colon H_1(M_K;\LC^n) \times H_1(M_K;\LC^n) \to \C(t)/\LC $$
in order to efficiently detect knots in the quotient~$\mathcal{F}_2/\mathcal{F}_{2.5}$ of the solvable filtration of~\cite{CochranOrrTeichner}.
Here,~$M_K$ denotes the $3$-manifold obtained by $0$-framed surgery on $K$.

Our goal is to use twisted Blanchfield pairings to define an algorithmically computable twisted signature function that generalizes both the Levine-Tristram signature $\sigma_K$ and the Casson-Gordon signatures~$\sign_\omega^{av}(\tau(K,\chi))$; the algorithm is described in~\cite[Subsection 1.4]{BCP_Compu} and can be implemented in a computer for any knot $K$, as long as the complex roots of the twisted Alexander polynomial $\Delta_K^{\alpha(K,\chi)}(t)$ that have modulus one can be determined exactly.
The analogy we have in mind being that the Levine-Tristram signature~$\sigma_K$ can be recovered from the Blanchfield form~$\Bl(K)$.
This goal can be formulated more broadly in the context of twisted knot invariants.

\subsection{The twisted signature function of a knot}

We now recall the set-up that goes into defining twisted invariants and discuss the properties of our twisted signature function.
Given a field $\F$ and a space $X$, a representation $\alpha \colon \pi_1(X) \to GL_n(\LF)$ is called \emph{acyclic} if the twisted homology groups $H_k(X;\LF^n_\alpha)$ are $\LF$-torsion for all $k$.
Write~$M_K$ for the 3-manifold obtained by 0-framed surgery on the knot~$K$, and fix a unitary acyclic representation~$\alpha \colon \pi_1(M_K) \to GL_n(\LF)$.
Here,  a matrix $A \in GL_n(\LF)$ is called \emph{unitary} if $A(A^\#)^T=\id$, where $\#$ denotes the involution whose componentwise effect is $\sum a_it^i \mapsto \sum \overline{a}_it^{-i}$ (we write $\overline{a}$ for the complex conjugate of $a \in \F$), and a representation \(\alpha\) is \emph{unitary} if \(\alpha(g)\) is a unitary matrix, for any \(g \in \pi_{1}(M_{K})\).

Associated to this data, there is a twisted Alexander polynomial~$\Delta^\alpha_{K}(t)$~\cite{Lin,Wada,KirkLivingston} and a twisted Blanchfield form $\Bl_\alpha(K)$~\cite{Powell,MillerPowell}.
When~$\alpha$ is induced by the abelianisation $ab$, these invariants respectively coincide with the classical Alexander polynomial $\Delta_K(t)$ and Blanchfield form $\Bl(K)$, while metabelian invariants, such as the Casson-Gordon invariants, are recovered by considering a representation 
\begin{equation}
  \label{eq:MetabelianIntro}
  \alpha_K(n,\chi) \colon \pi_1(M_K) \to GL_n(\LC)
\end{equation}
whose precise definition will be recalled in Subsection~\ref{sub:Metabelian}.

This paper develops a corresponding theory of twisted signature functions and further studies twisted Blanchfield forms.
The following theorem, which combines the results of Proposition~\ref{prop:LocallyConstant}, Remark~\ref{rem:LevineTristram}, Proposition~\ref{prop:SignatureSlice} and Corollary~\ref{cor:SignatureSatellite}, summarises some of the properties of our twisted signatures. 



\begin{theorem}
  \label{thm:Recap}
  Suppose~$\F$ is either~$\R$ or~$\C$.
  Given a knot $K$ and a unitary acyclic representation~$\alpha \colon \pi_1(M_K) \to GL_d(\LF)$, there is a function~$\sigma_{K,\alpha}\colon S^1 \to \Z$ with the following properties:
  \begin{enumerate}
  \item Piecewise continuity. The twisted signature function~$\sigma_{K,\alpha}$ is locally constant on the complement of the zeroes of the twisted Alexander polynomial $\Delta_K^\alpha(t)$.
  \item Obstruction to sliceness. The averaged signature function
    \begin{equation}\label{eq:averageKnotIntro}
      \sigma^{av}_{(K,\alpha)}(\omega)=\frac12\left(\lim_{\theta\to 0^+}\sigma_{(K,\alpha)}(e^{i\theta}\omega)+\lim_{\theta\to 0^-}\sigma_{(K,\alpha)}(e^{i\theta}\omega)\right)
    \end{equation} 
    provides an obstruction to sliceness: if $K$ is sliced by a disc $D \subset D^4$ and if~$\alpha$ extends to a unitary acyclic representation $\pi_1(D^4 \setminus \mathcal{N}(D)) \to GL_d(\LF)$, then $\sigma_{K,\alpha}^{av} \equiv 0$. 
  \item Behavior under satellite operations. Let $P,J$ be knots, and let~$\eta \subset S^3\setminus P \subset M_P$ be a simple closed curve.
    Assume that $\gamma \colon \pi_1(M_{P(J,\eta)}) \to GL_d(\LF)$ is a unitary acyclic representation such that $\det(\id -\gamma(\eta)) \neq 0$ and $\gamma(\mu_\eta)=\id$, where $\mu_\eta$ denotes the meridian of $\eta$.
    Then, for every $\omega \in S^1$, the averaged twisted signature satisfies
    $$  \sigma^{av}_{P(K,\eta),\gamma}(\omega)= \sigma^{av}_{P,\gamma_P}(\omega)+\sigma^{av}_{K,\gamma_K}(\omega).$$
  \item Relationship with the Levine-Tristram signature: if the representation~$\alpha$ is induced by the abelianisation, then~$\sigma_{K,\alpha}(\omega)$ coincides with $\sigma_K(\omega)$ for all $\omega \in S^1$.
  \end{enumerate}
\end{theorem}

Theorem~\ref{thm:Recap} shows that the twisted signature function is related to the twisted Alexander polynomial in the same way as the Levine-Tristram signature is related to the Alexander polynomial.
We refer to Proposition~\ref{prop:LocallyConstant} for some elementary properties of the twisted signature function, but now describe how the Casson-Gordon signatures can be recovered from our signature function.

\begin{theorem}
  \label{thm:Recover}
  Fix a prime power order character $\chi \colon H_1(\Sigma_n(K)) \to \Z_m$ and $\omega \in S^1$.
  When $\alpha=\alpha(n,\chi)$ is the metabelian representation mentioned in~\eqref{eq:MetabelianIntro}, the averaged twisted signature function is related to the Casson-Gordon signature $\sign_\omega^{av}(\tau(K,\chi))$ in the following way:
  $$ \sigma_{K,\alpha(n,\chi)}^{av}(\omega)=-\sign_\omega^{av}(\tau(K,\chi))+\sign_1^{av}(\tau(K,\chi)).$$
\end{theorem}

Theorem~\ref{thm:Recover} leads to what appears to be the first algorithm to explicitly compute the difference $-\sign_\omega^{av}(\tau(K,\chi))+\sign_1^{av}(\tau(K,\chi))$ of Casson-Gordon signatures from a knot diagram; the algorithm is described and implemented in \cite{BCP_Compu}.
In particular, while both~$\sign_\omega^{av}(\tau(K,\chi))$ and~$\sign_1^{av}(\tau(K,\chi))$ are rational numbers (as we recall in Definition~\ref{def:CassonGordonSignature}), Theorem~\ref{thm:Recover} implies that their difference is in fact an integer.

Summarising, the twisted signature function $\sigma_K \colon S^1 \to \Z$ satisfies properties analogous to the Levine-Tristram signature $\sigma_K$, recovers it when $\alpha$ is induced by the abelianisation, and is related to Casson-Gordon signatures for $\alpha=\alpha(K,\chi)$.

\begin{remark}
  In the abelian setting, the Levine-Tristram signature~$\sigma_K(\omega)$ of a knot $K$ at $\omega \in S^1$ is defined as
  $$\sigma_K(\omega)=\sign \left( (1-\omega)V +(1-\overline{\omega})V^T \right),$$
  where~$V$ is any Seifert matrix for $K$.
  In the twisted setting, the lack of a Seifert matrix means that there is no straightforward way of defining a twisted analogue of $\sigma_K$ by 3-dimensional means.

  On the other hand while $4$-dimensional approaches based on the eta invariant~\cite{AtiyahPatodiSinger} have been studied~\cite{LevineEta,FriedlEta}, the resulting signature invariants are unwieldly and, as far as we can tell, their relation to other twisted invariants is less apparent.
\end{remark}

As the signatures of Theorem~\ref{thm:Recap} are defined using twisted Blanchfield forms, we briefly review the set-up underlying these linking forms and our results concerning them.

\subsection{The twisted Blanchfield forms}

Given a closed, connected, oriented 3-manifold~$N$ and a unitary acyclic representation~$\alpha \colon \pi_1(N) \to GL_n(\LF)$, 
the twisted Blanchfield form is a Hermitian, non-singular sesqulinear form 
$$ \Bl_\alpha(N) \colon H_1(N;\LF^n_\alpha) \times H_1(N;\LF^n_\alpha) \to \F(t)/\LF.$$
While this pairing was first studied systematically by Powell~\cite{Powell} and, for $\alpha=\alpha(n,\chi)$, by Miller and Powell~\cite{MillerPowell}, its roots are to be found in the earlier work of Cochran-Orr-Teichner~\cite{CochranOrrTeichner} in the non-commutative setting.

The work of Powell laid the technical groundwork to the study of $\Bl_\alpha(N)$ and proved that the pairing is Hermitian and non-singular.
On the whole however, few general properties of $\Bl_\alpha(N)$ have been established so far.
A second aim of this paper is to prove further properties of $\Bl_\alpha(N)$, when $N=M_K$ is the $0$-framed surgery along $K$, in which case we write
$$\Bl_\alpha(K):=\Bl_\alpha(M_K).$$
When $\alpha$ is induced by abelianisation, $\Bl_\alpha(K)=\Bl(K)$ is the ``classical" Blanchfield pairing of $K$.

Our first result about twisted Blanchfield forms (which is similar to the work of Miller and Powell in the metabelian setting) describes how $\Bl_\alpha(K)$ leads to sliceness obstructions.

\begin{proposition}[see Proposition~\ref{prop:SignatureSlice}]
  Let \(K\) be a slice knot and let \(\beta \colon \pi_{1}(M_K) \to GL_d(\F[t^{\pm 1}])\) be a unitary acyclic representation. If $D\subset D^4 $ is a slice disk for $K$ and there is a unitary acyclic representation \(\gamma \colon \pi_{1}(D^4 \setminus \mathcal{N}(D)) \to GL_d(\F[t^{\pm 1}])\) extending $\beta$,  then the twisted Blanchfield pairing~\(\Bl_{\beta}(K)\) is metabolic.\footnote{\(\Bl_{\beta}(K)\) being \emph{metabolic} means that there exists a submodule $P \subset H_1(M_K;\LF^d_\beta)$ with $P=P^\perp$, where $P^\perp=\lbrace x \in H_1(M_K;\LF^d_\beta) \ | \ \Bl_{\beta}(K)(x,p)=0 \text{ for all } p \in P \rbrace.$}
\end{proposition}

One result we wish to emphasise concerns the behaviour of the twisted Blanchfield form under satellite operations.
For the classical Blanchfield form, a satellite formula was proved by Livingston-Melvin~\cite{LivingstonMelvin}: given a winding number $n$ satellite operator~$P \subset S^1 \times D^2$, this formula reads as
$$\Bl_{P(K)}(t) \cong \Bl(P)(t) \oplus \Bl(K)(t^n)$$ and can be seen as a generalization of the satellite formula for the Levine-Tristram signature~\cite{LitherlandIterated,Shinohara}.
In the metabelian case, a satellite formula for the Casson-Gordon invariants was proved by Litherland~\cite{LitherlandCobordism}, while satellite formulas for higher order signatures can be found in~\cite{CochranOrrTeichnerStructure}.
Building on the algebraic machinery from~\cite{BCP_Alg} and on work of Friedl-Leidy-Nagel-Powell~\cite{FriedlLeidyNagelPowell}, Theorem~\ref{thm:CablingTheorem} provides a satellite formula for twisted Blanchfield forms.
A particular case of this theorem
reads as follows.

\begin{theorem}
  \label{thm:SatelliteFormulaIntro}
  Let~$J,P$ be knots and let~$\eta \subset S^3\setminus P \subset M_P$ be a simple closed curve.
  Assume that~$\gamma \colon \pi_1(M_{P(J,\eta)}) \to GL_d(\LF)$ is a unitary acyclic representation such that $\det(\id -\gamma(\eta)) \neq~0$ and $\gamma(\mu_\eta)=\id$, where $\mu_\eta$ denotes the meridian of $\eta$.
  Then the twisted Blanchfield form $Bl_\gamma(P(J,\eta))$ is Witt equivalent
  to~$\Bl_{\gamma_P}(P) \oplus \Bl_{\gamma_J}(J).$
\end{theorem}

Here,  two linking forms \((M_1,\lambda_1)\) and \((M_2,\lambda_2)\) are \emph{Witt equivalent} if \((M_{1} \oplus M_{2}, \lambda_{1} \oplus -\lambda_{2})\) is metabolic. 
In particular, Theorem~\ref{thm:SatelliteFormulaIntro} implies the third item of Theorem~\ref{thm:Recap} because, as we recall in Proposition~\ref{prop:SignatureFunctionAlgebra} below,  Witt equivalent linking forms have identical averaged signature functions.

In Theorem~\ref{thm:metabelian-cabling-formula}, we prove a satellite formula for the metabelian Blanchfield form $\Bl_{\alpha(n,\chi)}(K)$; the proof uses Theorem~\ref{thm:SatelliteFormulaIntro}, and some representation theory.
This result then underpines further work of the second and third author with Min Hoon Kim concerning applications of our theory to the study of linear independence of algebraic knots in knot concordance~\cite{ConwayKimPolitarczyk}.


\subsection{Linking forms and their signature functions}

In order to outline the construction of the twisted signature $\sigma_{K,\alpha}$, we recall some properties of the signature function of a linking form from~\cite{BCP_Alg}.
First recall that given a PID~$R$ with involution $x \mapsto x^\#$
and field of fractions~$Q$, a \emph{linking form} refers to a sesquilinear, Hermitian form~
$$\lambda \colon M \times M \to Q/R$$
on a torsion $R$-module $M$.
Such a linking form is \emph{metabolic} if there exists a submodule $P \subset M$ with $P=P^\perp$, while~$(M,\lambda)$ is \emph{represented by} an~$(n\times n)$-non-degenerate Hermitian matrix~$A(t)$ if it is isometric to the linking form
\begin{align*}
  \lambda_{A(t)} \colon \LF^n/A(t) \times \LF^n/A(t) &\to \F(t)/\LF \\
  (x,y) &\mapsto x^TA(t)^{-1}y^\#.
\end{align*}
The next proposition summarises several properties of the signature functions built in~\cite{BCP_Alg}.

\begin{proposition}
  \label{prop:SignatureFunctionAlgebra}
  Given a non-singular linking form $(M,\lambda)$ over $\LF$, where \(\F = \R\) or \(\C\), there exists a signature function $\sigma_{(M,\lambda)} \colon S^1 \to \Z$ that satisfies the following properties:
  \begin{enumerate}[label=(S-\arabic*)]
  \item The signature function is locally constant on $S^1 \setminus \lbrace \xi \in S^1 \ | \ \Delta_M(\xi)=0 \rbrace$,  where $\Delta_M$ denotes the order of the $\LF$-module $M$.\label{item:S1}
  \item If \(\F = \R\), 
  then we have \label{item:S2}
    $$\sigma_{(M,\lambda)}(\overline{\xi})=\sigma_{(M,\lambda)}(\xi).$$
  \item The signature function is additive: \label{item:S3}
    $$\sigma_{(M_1,\lambda_1) \oplus (M_2,\lambda_2)}(\xi)=\sigma_{(M_1,\lambda_1)}(\xi)+\sigma_{(M_2,\lambda_2)}(\xi).$$
  \item The \emph{averaged signature function}\label{item:S4}
    $$  \sigma^{av}_{(M,\lambda)}(\xi)=\frac12\left(\lim_{\theta\to 0^+}\sigma_{(M,\lambda)}(e^{i\theta}\xi)+\lim_{\theta\to 0^-}\sigma_{(M,\lambda)}(e^{i\theta}\xi)\right)
    $$
    is locally constant, additive and vanishes if $(M,\lambda)$ is metabolic.
  \item If $(M,\lambda)$ is represented by a matrix $A(t)$, then the averaged signature function satisfies\label{item:S5}
    $$  \sigma^{av}_{(M,\lambda)}(\xi) =\sign^{av} A(\xi)-\sign^{av}A(1).$$
      \item \label{item:S6} A linking form $(M,\lambda)$ over $\LC$ is representable if and only if $\sigma^{av}_{(M,\lambda)}(1)=0$.
  \end{enumerate}
\end{proposition}
\begin{proof}
The five first properties are in listed in~\cite[Subsection 1.5]{BCP_Alg}.
Item~\ref{item:S6} is proved in~\cite[Proposition 5.7 and Corollary 5.15]{BCP_Alg}.
\end{proof}

The twisted signature function of Theorem~\ref{thm:Recap} is defined as the signature function of the non-singular linking form $(H_1(M_K;\LF_\alpha^n),\Bl_\alpha(K))$.
As a consequence, properties of the twisted signature (such as those listed in Theorem~\ref{thm:Recap}) are established by studying the behavior of the twisted Blanchfield form and then applying
Proposition~\ref{prop:SignatureFunctionAlgebra}.

\subsection*{Organisation}

In Section~\ref{sec:TwistedHomology}, we recall some background on twisted homology and twisted Blanchfield forms; we also relate twisted Blanchfield forms to their non-commutative analogues.
In Section~\ref{sec:TwistedSign3ManifKnot}, we define our twisted signature invariants, and prove several of the properties stated in Theorem~\ref{thm:Recap}; additionally in Theorem \ref{thm:CablingTheorem}, we prove our satellite formula for the twisted Blanchfield form.
In Section~\ref{sec:CassonGordon}, we focus on metabelian representations, relate our invariants to Casson-Gordon signatures (Theorem~\ref{thm:BlanchfieldCG}) and prove the satellite formulas for the metabelian Blanchfield form (Theorem~\ref{thm:metabelian-cabling-formula}).

\subsection*{Acknowledgments}
This paper together with~\cite{BCP_Alg,BCP_Compu} was originally part of a single paper titled ``Twisted Blanchfield pairings and Casson--Gordon invariants". 
During that project, the second named author was at University of Geneva and later at Durham University, supported by the Swiss National Science Foundation; during a subset of the revision and splitting process, he was a visitor at the Max Planck Institute for Mathematics.
The first named author is supported by the National Science Center grant 2019/B/35/ST1/01120.
The third named author is supported by the National Science Center grant 2016/20/S/ST1/00369.
All three authors wish to thank the University of Geneva and the University of Warsaw at which part of this work was conducted.

\subsection*{Conventions}
  If $R$ is a commutative ring and $f,g\in R$, we write $f\doteq g$ if there exists a unit $u\in R$ such that $f=ug$.
  For a ring $R$ with involution, we denote this involution by $x\mapsto x^\#$; the symbol~$\ol{x}$ is reserved for the complex conjugation.
  In particular, for $R=\LC$ the involution~$(-)^\#$ is the composition of the complex conjugation with the map $t\mapsto t^{-1}$.
  Given an $R$-module $M$, we denote by $M^\#$ the $R$-module that has the same underlying additive group as $M$, but for which the action by $R$ on $M$ is precomposed with the involution on $R$.
  For a matrix $A$ over $R$, we write $\makeithashT{A}$ for the transpose followed by the involution.

We work in the topological category unless otherwise stated.
All manifolds are assumed to be compact, connected, based and oriented; if a manifold has a nonempty boundary, then the basepoint is assumed to be in the boundary.
Maps between based spaces are understood to be basepoint preserving.

\section{Twisted homology and twisted Blanchfield pairings}
\label{sec:TwistedHomology}

This section is organized as follows: in Subection~\ref{sub:TwistedHomology}, we review twisted (co)homology, in Subsection~\ref{sub:Blanchfield}, we discuss twisted Blanchfield forms, and in Subsection~\ref{sub:Ore}, we relate twisted Blanchfield forms to their non-commutative counterparts.

\subsection{Twisted homology and cohomology}
\label{sub:TwistedHomology}
In this subsection, we briefly review twisted (co)ho\-mo\-lo\-gy and twisted intersection forms.
While standard references include~\cite{KirkLivingston, FriedlKim},  we also refer the reader to~\cite[Section 3]{ChaMillerPowell} for a thorough discussion of the twisted homology of disconnected manifolds and the resulting long exact sequence of pairs.
Additionally,  for ease of exposition, we will focus on CW complexes.
This is not cause a cause for concern,  when working with 3-manifolds and smooth 4-manifolds both of which admit the structure of finite CW complexes of the appropriate dimension; we refer to~\cite[Section  4]{FriedlNagelOrsonPowell} for the precise statements and references.
On the other hand,  while (topological) $4$-manifolds need not admit CW structures,  arguments involving twisted homology and Poincar\'e duality can nevertheless be carried out as in the smooth category (using singular chain complexes instead of cellular chain complexes): this is non-trivial and we refer the interested reader to~\cite[Appendix A]{FriedlNagelOrsonPowell} for many more details and references.\footnote{Most of this article is  $3$-dimensional and this is why our background sections are in the cellular setting: the only moment where we encounter a (potentially) non smooth  $4$-manifold is in the proof of Proposition~\ref{prop:witt-equiv-concordance}.
}

\medbreak


Let $X$ be a path connected CW complex and let $Y \subset X$ be a possibly empty subcomplex.
Use $p \colon \widetilde{X} \to X$ to denote the universal cover of $X$ and set $\widetilde{Y}:=p^{-1}(Y)$.
The left action of $\pi_1(X)$ on~$\widetilde{X}$ endows the chain complex $C_*(\widetilde{X},\widetilde{Y})$ with the structure of a left $\Z[\pi_1(X)]$-module.
Moreover, let~$R$ be a ring and let $M$ be a $(R,\Z[\pi_1(X)])$-module.
The chain complexes
\begin{align*}
  & C_*(X,Y;M):=M \otimes_{\Z[\pi_1(X)]}C_*(\widetilde{X},\widetilde{Y}) \\
  & C^*(X,Y;M):=\op{Hom}_{\text{right-}\Z[\pi_1(X)]}(\makeithash{C_*(\widetilde{X}, \widetilde{Y} )},M)
\end{align*}
of~left $R$-modules will be called the \textit{twisted (co)chain complex} of $(X,Y)$ with coefficients in $M$.
The corresponding homology left $R$-modules~$H_*(X,Y;M)$ and $H^*(X,Y;M)$ will be called the \textit{twisted (co)homology} modules of $(X,Y)$ with coefficients in $M$.

Assume that $R$ is endowed with an involution $x \mapsto x^\#$. Let $M,M'$ be $(R,\Z[\pi_1(X)])$-bimodules and let $S$ be a $(R,R)$-bimodule.
Furthermore, let $\langle -,-\rangle \colon M \times M' \to S$ be a non-singular
$\pi_1(X)$-invariant sesquilinear pairing, in the sense that $\langle m\gamma , n\gamma  \rangle=\langle m,n\rangle$ and $\langle rm,sn \rangle=r \langle m,n \rangle \makeithash{s}$ for all $\gamma \in  \pi_1(X)$, all $r,s \in R$ and all $m\in M, n \in M'$.
Non-singularity means that the induced map $M\to \Hom_{\text{left-}R}(M',S)^\#$ is an isomorphism.
In this setting, as recalled in the appendix (specifically Construction~\ref{cons:Evaluation}),
there is an \textit{evaluation map}
$$ \ev \colon H^i(X,Y;M) \to \makeithash{\op{Hom}_{\text{left-}R}(H_i(X,Y;M'),S)}. $$
The evaluation map need not be an isomorphism.
If $R$ is a principal ideal domain, then the universal coefficient theorem implies that the cohomology group $H^i(X,Y;M)$ decomposes as the direct sum of $\makeithash{\op{Hom}_{\text{left-}R}(H_i(X,Y;M'),S)}$ and $\makeithash{\op{Ext}^1_{\text{left-}R}(H_i(X,Y;M'),S)}$.
In general, the evaluation map can be studied using the universal coefficient spectral sequence~\cite[Theorem 2.3]{LevineKnotModules}.
Returning to $\pi_1(X)$-invariant pairings, we now discuss two examples which we shall use throughout this section.

\begin{example}
  \label{ex:UsualCoeff}
  Let $R$ be an integral domain with field of fractions $Q$.
  Given a positive integer~$d$, let $\beta \colon \pi_1(X) \to GL_d(R)$ be a representation and use~$R^d_\beta$ to denote the $(R,\Z[\pi_1(X)])$-module whose right $\Z[\pi_1(X)]$-module structure is given by right multiplication by~$\beta(\gamma)$ on row vectors for~$\gamma \in \pi_1(X)$.
  Use $\widecheck{\beta} \colon \pi_1(X) \to GL_d(R)$
  to denote the representation defined by $\widecheck{\beta}(\gamma)=\makeithashT{\beta(\gamma^{-1})}$ and consider the pairings
  \begin{align*}
    \left(Q/R \otimes_R R_\beta^d\right) \times R_{\widecheck{\beta}}^d &\to Q/R   \ \ \ \ \ \ \ \ \ \ \ \ \ \
                                                    R_\beta^d \times R_{\widecheck{\beta}}^d \to R \\
    (q \otimes v, w) & \mapsto v \makeithashT{w} \cdot q \ \ \ \ \ \ \ \ \ \ \ \ \ \   (v, w) \mapsto v \makeithashT{w}.
  \end{align*}
  Note that the use of the representation $\widecheck{\beta}$ guarantees that these pairings are indeed $\pi_1(X)$-invariant.
  In particular, if $\beta$ is a unitary representation (so that $\widecheck{\beta}=\beta$), then the two pairings displayed above give rise to evaluation maps  $H^i(X,Y;Q/R \otimes_R R_\beta^d) \to \makeithash{\op{Hom}_{\text{left-}R}(H_i(X,Y;R_{\beta}^d),Q/R)}$ and  $H^i(X,Y; R_\beta^d) \to \makeithash{\op{Hom}_{\text{left-}R}(H_i(X,Y; R_{\beta}^d),R)}$.
\end{example}

Next, we briefly review the definition of twisted intersection forms.
Let $W$ be a compact oriented $n$-manifold and let $M$ be a $(R,\Z[\pi_1(W)])$-bimodule.
As we mentioned above,  if $W$ does not admit a CW structure, we can use singular homology; see~\cite[Appendix A]{FriedlNagelOrsonPowell}.
The cap product with the fundamental class \([W, \partial W] \in H_{4}(W, \partial W)\) induces twisted Poincar\'e duality isomorphisms
$$ H_k(W,\partial W;M) \cong H^{n-k}(W;M) \quad H_k(W;M) \cong H^{n-k}(W,\partial W;M),$$
both of which we denote by $\text{PD}$, 
for more details we refer to~\cite[Appendix A]{FriedlNagelOrsonPowell}.

In order to describe twisted intersection pairings, fix an $(R,R)$-bimodule $S$ and a non-singular $\pi_1(W)$-invariant sesquilinear pairing $M \times M \to S$.
Compose the homomorphism induced by the inclusion $(W,\emptyset) \to (W,\partial W)$ with Poincar\'e duality and the evaluation homomorphism described above.
The result is the following homomorphism of left $R$-modules.
$$ \Phi \colon H_k(W;M) \to H_k(W,\partial W;M) \xrightarrow{\text{PD}} H^{n-k}(W;M) \xrightarrow{\ev}\makeithash{\op{Hom}_{\text{left-}R}(H_{n-k}(W;M),S)}.$$
Restricting to the case where $W$ is $2n$-dimensional, the \emph{twisted intersection pairing}
$$\lambda_{M,W} \colon H_n(W;M) \times H_n(W;M) \to S $$
is defined by $\lambda_{M,W}(x,y)=\Phi(y)(x)$.
Note that while $\lambda_{M,W}$ is $(-1)^n$-Hermitian, it may be singular: the submodule $\im (H_{n}(\partial W;M) \to H_{n}(W;M)) $ is annihilated by~$\lambda_{M,W}$ and the evaluation map need not be an isomorphism.

\subsection{The twisted Blanchfield pairing}
\label{sub:Blanchfield}
In this subsection, we review twisted Blanchfield pairings.
While the construction can be performed over non-commutative rings~\cite{Powell, CochranOrrTeichner,CochranNonCommutative}, we focus first on the case $R=\F[t^{\pm 1}]$.
References on twisted Blanchfield pairings include~\cite{MillerPowell, Powell}.
\medbreak

Let $N$ be a closed $3$-manifold.
Just as in Example~\ref{ex:UsualCoeff}, let $\beta \colon \pi_1(N) \to GL_d(\LF)$ be a unitary representation and use $\F[t^{\pm 1}]^d_\beta$ to denote the $(\LF,\Z[\pi_1(X)])$-bimodule whose right~$\Z[\pi_1(N)]$-module structure is given by right multiplication by $\beta(\gamma)$ on row vectors. In what follows, we shall think of $\F[t^{\pm 1}], \F(t)$ and $\F(t)/\F[t^{\pm 1}]$ as $(\F[t^{\pm 1}],\F[t^{\pm 1}])$-bimodules.
Since
\[\F(t)^{d}_{\beta} := \F(t^{\pm 1}) \otimes_{\F[t^{\pm 1}]}~\F[t^{\pm 1}]^d_\beta \quad \text{and} \quad (\F(t)/\F[t^{\pm1}])^{d}_{\beta} := \F(t)/\F[t^{\pm 1}] \otimes_{\F[t^{\pm 1}]}~\F[t^{\pm 1}]^d_\beta\]
are $(\F[t^{\pm 1}],\Z[\pi_1(N)])$-bimodules, there is a short exact sequence
$$ 0 \to C^*(N; \F[t^{\pm 1}]^d_\beta) \to C^*(N,\F(t)^{d}_{\beta}) \to C^*(N;(\F(t)/ \F[t^{\pm1}])^{d}_{\beta}) \to 0$$
of cochain complexes of left $\F[t^{\pm 1}]$-modules, where we identified $\F[t^{\pm 1}]^d_\beta$ with $\F[t^{\pm 1}] \otimes_{\LF} \F[t^{\pm 1}]^d_\beta $.
Passing to cohomology, we obtain a long exact sequence in which the connecting map 
$$ \operatorname{BS} \colon  H^{k}(N;(\F(t)/\F[t^{\pm 1}])^{d}_{\beta}) \to H^{k+1}(N;\F[t^{\pm 1}]^d_\beta) $$
is called the \emph{Bockstein homomorphism}.
We say that $\beta$ is \emph{$H_1$-null}
 if $H_1(N;\F(t)^{d}_{\beta})$ vanishes, i.e.~if the $\LF$-module $H_1(N;\LF_\beta^d)$ is torsion. 
Observe that if $\beta$ is $H_1$-null, then the corresponding Bockstein homomorphism is an isomorphim: indeed Poincar\'e duality and the universal coefficient theorem respectively imply that $H^2(N;\F(t)^d_\beta)$
and $H^1(N;(\F(t) /\F[t^{\pm1}])^d_\beta)$ vanish.

Consider the composition
\begin{align*}
  \Theta \colon H_1(N;\F[t^{\pm 1}]^d_\beta) \,
  & \xrightarrow{PD}\, H^2(N;\F[t^{\pm 1}]^d_\beta)  \\
  & \xrightarrow{BS^{-1}}   H^1(N;(\F(t)/\F[t^{\pm 1}])^d_\beta) \\
  & \xrightarrow{\text{ev}}  \makeithash{\Hom_{\F[t^{\pm 1}]}(H_1(N;\F[t^{\pm 1}]^d_\beta),\F(t)/\F[t^{\pm 1}])}
\end{align*}
of the three following $\F[t^{\pm 1}]$-homomorphisms: Poincar\'e duality, the inverse Bocktein and the evaluation map described in Example~\ref{ex:UsualCoeff}.
The main definition of this section is the following.

\begin{definition}
  \label{def:Blanchfield}
  Let $N$ be a closed oriented $3$-manifold and let $\beta \colon \pi_1(N) \to GL_d(\F[t^{\pm 1}])$ be a unitary $H_1$-null representation.
  The \emph{twisted Blanchfield pairing associated to $\beta$}
  $$ \Bl_\beta(N) \colon  H_1(N;\F[t^{\pm 1}]^d_\beta) \times H_1(N;\F[t^{\pm 1}]^d_\beta) \to \F(t)/\F[t^{\pm 1}]$$
  is defined as $\Bl_\beta(N)(x,y)=\Theta(y)(x)$.
  If $N=M_K$ is the $0$-framed surgery along a knot $K$, then we write $\Bl_\beta(K)$ instead of $\Bl_\beta(M_K)$.
\end{definition}

We refer to~\cite{Powell} for a more general treatment of twisted Blanchfield pairings, but make two remarks nonetheless.
Firstly, Powell proved that $\Bl_\beta(N)$ is Hermitian~\cite{Powell}.
Secondly, since $\F[t^{\pm 1}]$ is a PID, it follows that $\Bl_\beta(N)$ is non-singular: indeed $\F(t)/\F[t^{\pm 1}]$ is an injective $\F[t^{\pm 1}]$-module and thus the evaluation map is an isomorphism.

Finally, although we mostly focus on \emph{closed} 3-manifolds, we conclude this subsection with a remark on manifolds with boundary.  To describe this case, we say that a representation $\beta \colon \pi_1(N) \to~GL_d(\LF)$ is \emph{acyclic} if $H_i(N;\F[t^{\pm 1}]_\beta^d)$ is a torsion $\F[t^{\pm 1}]$-module for each $i$.

\begin{remark}
  \label{rem:Boundary}
  If $N$ is a 3-manifold with boundary and $\beta \colon \pi_1(N) \to GL_d(\LF)$ is a unitary acyclic representation, then one also obtains a twisted Blanchfield pairing $\operatorname{Bl}_\beta(N)$.
  Indeed, one starts from the map $i \colon H_1(N;\LF_\beta^d) \to H_1(N,\partial N;\LF_\beta^d)$ induced by the inclusion $(N,\emptyset) \to~(N,\partial N)$ and then proceeds as in the closed case, using duality, the inverse of the Bockstein homorphism and evalutation.
  The acyclicity assumption is used to guarantee that the Bockstein homomorphism is an isomorphism: $H_1$-nullity is not enough when the boundary is non-empty, although an alternative is to work on the torsion submodule of $H_1(N;\LF_\beta^d)$ and to proceed as in~\cite{Powell, Hillman} or~\cite[Subsection 2.2]{ConwayBlanchfield}.
  Finally, note that when $\partial N$ is non-empty, $\operatorname{Bl}_\beta(N)$ may be singular: the map~$i$ need not be an isomorphism.
\end{remark}

\subsection{Relation to Blanchfield pairings over noncommutative rings}
\label{sub:Ore}
Twisted Blanchfield pairing are also defined in the non-commutative setting, using Ore rings~\cite{CochranOrrTeichner, FriedlLeidyNagelPowell,Powell}.
In this subsection, after briefly reviewing some basics on these rings, we relate the resulting non-commutative Blanchfield pairings to the  twisted Blanchfield pairings of Subsection~\ref{sub:Blanchfield}.
The reason for this incursion into the non-commutative setting is that the proof of Theorem~\ref{thm:CablingTheorem} requires that we import a result  from~\cite{FriedlLeidyNagelPowell} (which uses non-commutative Blanchfield forms) into the setting of twisted Blanchfield forms.

\medbreak
We start with some brief recollections on Ore rings, referring to~\cite[Chapter 4, Section 10]{Lam-modules} for further details.
Let $\mathcal{R}$ be a ring.
A multiplicative subset $S \subset \mathcal{R}$ is \emph{right permutable} if, for any~$r \in \mathcal{R}$ and $s \in S$, we have $r S \cap s\mathcal{R} \neq \emptyset$.
An element $r \in \mathcal{R}$ is \emph{regular} if it is neither a left zero divisor nor a right  zero divisor.
We say that $\mathcal{R}$ is a \emph{right Ore ring} if the set $S$ of regular elements is right permutable.
In this case, $\mathcal{R}$ can be localized at $S$ and $\mathcal{R}S^{-1}$ is called the \emph{right ring of quotients of $\mathcal{R}$}.
The left analogues of these notions are defined similarly.
If $\mathcal{R}$ is both a left Ore ring and a right Ore ring, then $\mathcal{R}$ is called an \emph{Ore ring}.
In this case, the right and left rings of quotients agree, and we simply write $\mathcal{Q}$.

Commutative rings are Ore rings~\cite[Chapter 4, Section 10, 10.18]{Lam-modules}.
In what follows however, we shall mostly be concerned with rings of matrices.

\begin{example}
  \label{ex:MatrixOre}
  If $R$ is a commutative ring, then the matrix ring \(\mathcal{M}_{d}(R)\) is an Ore ring~\cite[Example 11.21(1)]{Lam-modules}.
  For concreteness, we focus on the case where $R=\LF$.
  In this case, by~\cite[Example~11.21(1)]{Lam-modules} and~\cite[Proposition~10.21]{Lam-modules} the ring of quotients of \(\mathcal{M}_{d}(\F[t^{\pm1}])\) is isomorphic to \(\mathcal{M}_{d}(\F(t))\).
\end{example}

Next, suppose that \(R\) is a commutative ring with unit.
Right multiplication on row vectors endows $R^{d}$ with a right \(\mathcal{M}_{d}(R)\)-module structure. We write \(R^{d}_{\mathcal{M}}\) for emphasis.
An alternate description of \(R^{d}_{\mathcal{M}}\) is obtained by considering the elementary matrix \(E_{i,j}\) whose only nonzero coefficient is a~$1$ in its $(i,j)$-entry.
Indeed, one has the isomorphim \( E_{i,i}\mathcal{M}_d(R) \cong R^{d}_{\mathcal{M}}\) of right $\mathcal{M}_d(R)$-modules.

The next lemma shows that $\mathcal{M}_d(R)$ decomposes as a direct sum of $d$ copies of~$R^{d}_{\mathcal{M}}$.

\begin{lemma}\label{obs:proj-modules}
  If \(R\) is a commutative ring with unit, then \(R^{d}_{\mathcal{M}}\) is a projective right \(\mathcal{M}_{d}(R)\)-module.
  In fact, we have the following isomorphism of right \(\mathcal{M}_{d}(R)\)-modules:
  \begin{equation}
    \label{eq:DecompositionOre}
    \mathcal{M}_{d}(R) \cong  \bigoplus_{i=1}^d E_{i,i}\mathcal{M}_{d}(R) \cong  \bigoplus_{i=1}^d R^{d}_{\mathcal{M}}.
  \end{equation}
\end{lemma}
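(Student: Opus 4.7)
The plan is to exploit the standard decomposition of the identity matrix into orthogonal idempotents, which immediately gives the displayed module decomposition, and then deduce projectivity from the general fact that direct summands of free modules are projective.

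First I would observe that the diagonal elementary matrices $E_{1,1},\dots,E_{d,d}$ form a complete system of orthogonal idempotents in $\mathcal{M}_d(R)$, i.e. $E_{i,i}E_{j,j}=\delta_{ij}E_{i,i}$ and $\sum_{i=1}^d E_{i,i}=I_d$. Viewing $\mathcal{M}_d(R)$ as a right module over itself, any $A\in\mathcal{M}_d(R)$ therefore decomposes uniquely as $A=I_d\cdot A=\sum_i E_{i,i}A$ with $E_{i,i}A\in E_{i,i}\mathcal{M}_d(R)$, and the orthogonality ensures that $E_{i,i}A=0$ for all $i$ forces $A=0$. This proves the first isomorphism
\[\mathcal{M}_d(R)\cong\bigoplus_{i=1}^d E_{i,i}\mathcal{M}_d(R)\]
as right $\mathcal{M}_d(R)$-modules.

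Next I would identify each summand $E_{i,i}\mathcal{M}_d(R)$ with $R^d_{\mathcal{M}}$. The matrix $E_{i,i}A$ has as its only possibly nonzero row the $i$-th row of $A$, so the map sending $E_{i,i}A$ to this $i$-th row (viewed as an element of $R^d_{\mathcal{M}}$) is a well-defined $R$-linear bijection. To see that it is $\mathcal{M}_d(R)$-equivariant, note that right multiplication by any $B\in\mathcal{M}_d(R)$ acts on $E_{i,i}A$ by producing $E_{i,i}(AB)$, whose $i$-th row is the $i$-th row of $A$ multiplied on the right by $B$; this is exactly the right $\mathcal{M}_d(R)$-action on $R^d_{\mathcal{M}}$. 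Combining both isomorphisms yields the displayed formula.

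Finally, for projectivity, I would simply remark that $\mathcal{M}_d(R)$ is free of rank one as a right module over itself, and hence the isomorphism above exhibits $R^d_{\mathcal{M}}$ as a direct summand of the free module $\mathcal{M}_d(R)$. Direct summands of free modules are projective, so $R^d_{\mathcal{M}}$ is a projective right $\mathcal{M}_d(R)$-module. There is no serious obstacle here; the only point worth being careful about is keeping the sidedness straight (we are decomposing the regular \emph{right} module, so we use idempotents acting on the left), but once that is set up the argument is essentially bookkeeping.
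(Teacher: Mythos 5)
Your proof is correct and follows essentially the same route as the paper's: both arguments rest on the observation that $E_{1,1},\dots,E_{d,d}$ are orthogonal idempotents summing to the identity, yielding the direct sum decomposition of the regular right module, with $E_{i,i}\mathcal{M}_d(R)\cong R^d_{\mathcal{M}}$ read off as the $i$-th row. You have simply spelled out the details (the row identification and the equivariance) that the paper leaves implicit.
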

\begin{proof}
  First, notice that the \(E_{i,i}\) are orthogonal idempotents: one has  $E_{i,i} E_{j,j}=\delta_{ij}E_{i,j}$.
  The conclusion of the lemma now follows since $\id = E_{1,1} + \cdots +E_{d,d}$.
\end{proof}

Next, assume that $X$ is a CW complex together with a homomorphism $\gamma \colon \Z[\pi_1(X)] \to \mathcal{R}$ to an Ore ring $\mathcal{R}$.
Just as in Subsection~\ref{sub:TwistedHomology}, this endows $\mathcal{R}$ with a right $\Z[\pi_1(X)]$-module structure which we denote by $\mathcal{R}_\gamma$ for emphasis.
Given a commutative ring $R$ with unit, note that any representation \(\gamma \colon \pi_{1}(X) \to GL_d(R)\) canonically extends to a ring homomorphism \(\gamma \colon \Z[\pi_{1}(X)] \to \mathcal{M}_{d}(R)\).
In particular $\gamma$ endows $R^d$ and $\mathcal{R}=\mathcal{M}_d(R)$ with right $\Z[\pi_{1}(X)]$-module structures.

The following proposition is a topological application of Lemma~\ref{obs:proj-modules}.

\begin{proposition}
  \label{prop:OreTwisted}
  Let \(R\) be a commutative ring with unit.
  If $\gamma \colon \pi_1(X) \to GL_d(R)$ is a representation, then we have the following chain isomorphisms of chain complexes of left $R$-modules:
  \begin{align*}
    & C_*(X;\mathcal{M}_d(R)_\gamma) \cong \bigoplus_{i=1}^d C_*(X;R_\gamma^d),  \\
    &C^*(X;\mathcal{M}_d(R)_\gamma) \cong \bigoplus_{i=1}^d C^*(X;R_\gamma^d).
  \end{align*}
  Furthermore, one also has the following isomorphisms of left $R$-modules:
  \begin{align*}
    & H_*(X;R_\gamma^d) \cong R^d_{\mathcal{M}} \otimes_{\mathcal{M}_d(R)} H_*(X;\mathcal{M}_d(R)_\gamma) ,  \\
    & H^*(X;R_\gamma^d) \cong R^d_{\mathcal{M}} \otimes_{\mathcal{M}_d(R)} H^*(X;\mathcal{M}_d(R)_\gamma).
  \end{align*}
\end{proposition}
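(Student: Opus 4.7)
The plan is to derive all four isomorphisms from the single decomposition given in Lemma~\ref{obs:proj-modules} by applying the standard functors used to build twisted (co)chains, and then passing to (co)homology using exactness.

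First, I would observe that the isomorphism $\mathcal{M}_d(R) \cong \bigoplus_{i=1}^d E_{i,i}\mathcal{M}_d(R) \cong \bigoplus_{i=1}^d R^d_{\mathcal{M}}$ of right $\mathcal{M}_d(R)$-modules is also an isomorphism of right $\Z[\pi_1(X)]$-modules via $\gamma$, since $\pi_1(X)$ acts on both sides through $\mathcal{M}_d(R)$. Applying the functor $(-) \otimes_{\Z[\pi_1(X)]} C_*(\widetilde{X})$ then yields the chain isomorphism
\[
C_*(X;\mathcal{M}_d(R)_\gamma) \;\cong\; \bigoplus_{i=1}^d C_*(X;R^d_\gamma)
\]
of left $R$-modules (here the left $R$-action is the one obtained by restricting the left $\mathcal{M}_d(R)$-action along $R \hookrightarrow \mathcal{M}_d(R)$, $r \mapsto rI$). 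For the cochain version, the analogous computation applies the functor $\op{Hom}_{\text{right-}\Z[\pi_1(X)]}(\makeithash{C_*(\widetilde{X})},-)$, which carries the finite direct sum inside to the outside, yielding $C^*(X;\mathcal{M}_d(R)_\gamma) \cong \bigoplus_{i=1}^d C^*(X;R^d_\gamma)$.

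For the second pair of isomorphisms, I would work on the chain level first. Using associativity of the tensor product and $R^d_{\mathcal{M}} \otimes_{\mathcal{M}_d(R)} \mathcal{M}_d(R) \cong R^d_{\mathcal{M}}$, one computes
\[
R^d_{\mathcal{M}} \otimes_{\mathcal{M}_d(R)} C_*(X;\mathcal{M}_d(R)_\gamma) \;\cong\; R^d_{\mathcal{M}} \otimes_{\mathcal{M}_d(R)} \bigl(\mathcal{M}_d(R)_\gamma \otimes_{\Z[\pi_1(X)]} C_*(\widetilde{X})\bigr) \;\cong\; C_*(X;R^d_\gamma).
\]
The same idea works on cochains once we identify $R^d_{\mathcal{M}} \otimes_{\mathcal{M}_d(R)} N$ with $E_{1,1} \cdot N$ for any left $\mathcal{M}_d(R)$-module $N$ (using $R^d_{\mathcal{M}} \cong E_{1,1}\mathcal{M}_d(R)$); applied to $N = C^*(X;\mathcal{M}_d(R)_\gamma)$, this picks out precisely the cochains valued in $E_{1,1}\mathcal{M}_d(R)_\gamma \cong R^d_\gamma$.

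Finally, to pass from chains/cochains to homology/cohomology, I would invoke the projectivity of $R^d_{\mathcal{M}}$ as a right $\mathcal{M}_d(R)$-module (which is part of Lemma~\ref{obs:proj-modules}): the functor $R^d_{\mathcal{M}} \otimes_{\mathcal{M}_d(R)} (-)$ is therefore exact, so it commutes with taking (co)homology. Combining this with the chain-level identifications from the previous paragraph gives the desired isomorphisms
\[
H_*(X;R^d_\gamma) \cong R^d_{\mathcal{M}} \otimes_{\mathcal{M}_d(R)} H_*(X;\mathcal{M}_d(R)_\gamma), \qquad H^*(X;R^d_\gamma) \cong R^d_{\mathcal{M}} \otimes_{\mathcal{M}_d(R)} H^*(X;\mathcal{M}_d(R)_\gamma).
\]
The main point requiring care is keeping the bimodule structures straight: in particular, remembering that the direct sum decomposition of $\mathcal{M}_d(R)$ is only as right $\mathcal{M}_d(R)$-modules (so the chain-level splitting is a splitting of left $R$-modules, not of left $\mathcal{M}_d(R)$-modules), and consequently that extracting a single summand at the $\mathcal{M}_d(R)$-module level must be done via tensoring with $R^d_{\mathcal{M}}$ rather than by projecting to a fixed direct summand. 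Apart from this bookkeeping, the proof is essentially formal.
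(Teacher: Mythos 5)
Your proof is correct and follows essentially the same route as the paper: both derive the chain-level isomorphisms from the decomposition $\mathcal{M}_d(R)\cong\bigoplus_{i=1}^d R^d_{\mathcal{M}}$ of Lemma~\ref{obs:proj-modules} by applying the twisted (co)chain functors, and both obtain the (co)homology isomorphisms by using the projectivity (hence flatness) of $R^d_{\mathcal{M}}$ over $\mathcal{M}_d(R)$ to commute $R^d_{\mathcal{M}}\otimes_{\mathcal{M}_d(R)}(-)$ with passage to (co)homology. Your version is a bit more explicit (e.g.\ in identifying $R^d_{\mathcal{M}}\otimes_{\mathcal{M}_d(R)}N$ with $E_{1,1}N$ on the cochain level, and in tracking the bimodule structures), which incidentally fixes a small slip in the paper's prose where the roles of $R^d_{\mathcal{M}}$ and $\mathcal{M}_d(R)$ are accidentally swapped in the sentence asserting projectivity.
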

\begin{proof}
  The chain isomorphisms follow from the decomposition of $\mathcal{M}_d(R)$ displayed in~\eqref{eq:DecompositionOre}.
  To deal with the second assertion, we once again use Lemma~\ref{obs:proj-modules} to deduce that $\mathcal{M}_d(R)$ is a projective (and thus flat) $R_\mathcal{M}^d$-module.
  We therefore obtain the following isomorphisms:
  $$ H_*(X;R_\gamma^d) \cong H_*(X; R_{\mathcal{M}}^d \otimes_{\mathcal{M}_d(R)} \mathcal{M}_d(R)_\gamma) \cong R_{\mathcal{M}}^d \otimes_{\mathcal{M}_d(R)} H_*(X;  \mathcal{M}_d(R)_\gamma).$$
  The proof for cohomology is analogous.
  This concludes the proof of the lemma.
\end{proof}


Next, we deal with non-commutative Blanchfield pairings.
Let $N$ be a closed oriented $3$-manifold.
Let $\mathcal{R}$ be an Ore ring with involution and use~$\mathcal{Q}$ to denote its ring of quotients.
Given a homomorphism $\gamma \colon \Z[\pi_1(N)] \to \mathcal{R}$, we can view $\mathcal{R},\mathcal{Q}$ and~$\mathcal{Q}/\mathcal{R}$ as right $\Z[\pi_1(N)]$-modules.
In particular, as in Subsections~\ref{sub:TwistedHomology} and~\ref{sub:Blanchfield}, there are Poincar\'e duality isomorphisms and a Bockstein homomorphism on cohomology. Thus, if $H_1(N;\mathcal{Q}_\gamma)=0$, then one can consider the composition
$$ \Phi \colon H_1(N;\mathcal{R}_\gamma) \xrightarrow{\operatorname{PD}}  H^2(N;\mathcal{R}_\gamma) \xrightarrow{BS^{-1}} H^1(N;\mathcal{Q}/\mathcal{R}_\gamma) \xrightarrow{\operatorname{ev} \circ \kappa} \operatorname{Hom}_\mathcal{R}(H_1(N;\mathcal{R}_\gamma);\mathcal{Q}/\mathcal{R})^\#.$$
We refer to~\cite[Section 2]{FriedlLeidyNagelPowell} for further details, but the upshot is that there is a Blanchfield pairing in the non-commutative setting which is defined exactly as in Subsection~\ref{sub:Blanchfield}.

\begin{definition}
  \label{def:NonCommutativeBlanchfield}
  Let $N$ be a closed oriented $3$-manifold together with a map $\gamma \colon \Z[\pi_1(N)] \to \mathcal{R}$ to an Ore ring with involution.
  Assuming that $H_1(N;\mathcal{Q}_\gamma)=0$, there is a \emph{non-commutative Blanchfield pairing}
  $$ \Bl_{\mathcal{R},\gamma}(N) \colon H_1(N;\mathcal{R}_\gamma) \times H_1(N;\mathcal{R}_\gamma) \to \mathcal{Q}/\mathcal{R}$$
  which is defined by setting $\Bl_{\mathcal{R},\gamma}(N)(x,y)=\Phi(y)(x)$.
\end{definition}

Note that in the literature $\mathcal{R}$, is usually assumed to be an Ore \emph{domain}, however the definition extends \emph{verbatim} to the case of Ore rings.
The next proposition relates the non-commutative Blanchfield pairing to the twisted Blanchfield pairing.




\begin{proposition}
  \label{prop:BlanchfieldOreTwisted}
  Let \(N\) be a closed oriented \(3\)-manifold, and let \(\gamma \colon \pi_{1}(N) \to GL_d(\LF)\) be a representation.
  The following assertions hold:
  \begin{enumerate}
  \item We have $H_*(N;\F(t)^d_\gamma)=0$ if and only if \(H_*(N;\mathcal{M}_{d}(\F(t))_\gamma)=0\).
  \item The following $\LF$-linking forms are canonically isometric:
    \[\F[t^{\pm1}]^{d}_{\mathcal{M}} \otimes_{\mathcal{M}_{d}(\F[t^{\pm1}])} \Bl_{\mathcal{M}_{d}(\F[t^{\pm1}]),\gamma}(N) \cong \Bl_{\gamma}(N).\]
  \end{enumerate}
\end{proposition}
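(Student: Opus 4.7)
Part (1) follows quickly from Proposition~\ref{prop:OreTwisted}. Extending $\gamma$ along $GL_d(\LF) \hookrightarrow GL_d(\F(t))$ and noting that $\F(t)^d_\gamma \cong \F(t) \otimes_{\LF} \LF^d_\gamma$, the proposition produces a chain isomorphism
\[C_*\bigl(N;\mathcal{M}_d(\F(t))_\gamma\bigr) \;\cong\; \bigoplus_{i=1}^d C_*\bigl(N; \F(t) \otimes_{\LF}\LF^d_\gamma\bigr).\]
Passing to homology in every degree yields the equivalence of the two vanishing conditions.

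The heart of part (2) is that the bimodule $\LF^d_\mathcal{M}$ implements the Morita equivalence between $\mathcal{M}_d(\LF)$ and $\LF$ (it corresponds to the idempotent $E_{1,1}$), under which the construction of the Blanchfield pairing is preserved. The plan is to write out the composition
\[H_1\!\bigl(N;\mathcal{M}_d(\LF)_\gamma\bigr) \xrightarrow{\PD} H^2\!\bigl(N;\mathcal{M}_d(\LF)_\gamma\bigr) \xrightarrow{\operatorname{BS}^{-1}} H^1\!\bigl(N;\mathcal{M}_d(\F(t)/\LF)_\gamma\bigr) \xrightarrow{\ev} \Hom_{\mathcal{M}_d(\LF)}\!\bigl(-,\mathcal{M}_d(\F(t)/\LF)\bigr)^{\#}\]
defining $\Bl_{\mathcal{M}_d(\LF),\gamma}(N)$ (using the tautological identification $\mathcal{M}_d(\F(t))/\mathcal{M}_d(\LF) \cong \mathcal{M}_d(\F(t)/\LF)$), apply the functor $\LF^d_\mathcal{M} \otimes_{\mathcal{M}_d(\LF)} -$ term by term, and identify the result with the analogous composition defining $\Bl_\gamma(N)$.

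The first two arrows are routine. Poincar\'e duality is cap product with the fundamental class, a chain-level operation that commutes with the direct-sum decomposition of Proposition~\ref{prop:OreTwisted}. The Bockstein is the connecting morphism of the coefficient short exact sequence $0 \to \mathcal{M}_d(\LF)_\gamma \to \mathcal{M}_d(\F(t))_\gamma \to \mathcal{M}_d(\F(t)/\LF)_\gamma \to 0$; by Lemma~\ref{obs:proj-modules} the right $\mathcal{M}_d(\LF)$-module $\LF^d_\mathcal{M}$ is flat, so tensoring produces the short exact sequence $0 \to \LF^d_\gamma \to \F(t)^d_\gamma \to (\F(t)/\LF)^d_\gamma \to 0$, and naturality of the connecting homomorphism handles the compatibility.

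The main obstacle is the evaluation map. Here the plan is to invoke the Morita-theoretic natural isomorphism
\[\LF^d_\mathcal{M} \otimes_{\mathcal{M}_d(\LF)} \Hom_{\mathcal{M}_d(\LF)}(H,V)^{\#} \;\stackrel{\cong}{\longrightarrow}\; \Hom_{\LF}\!\bigl(\LF^d_\mathcal{M}\otimes_{\mathcal{M}_d(\LF)} H,\;\LF^d_\mathcal{M}\otimes_{\mathcal{M}_d(\LF)} V\bigr)^{\#},\]
applied with $H = H_1(N;\mathcal{M}_d(\LF)_\gamma)$ and $V = \mathcal{M}_d(\F(t)/\LF)$. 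The subtle point to verify is that the involution $\#$ on $\mathcal{M}_d(\LF)$ (the conjugate transpose induced from the involution on $\LF$) is carried correctly across this isomorphism, and that pairing values originally living in $\mathcal{M}_d(\F(t)/\LF)$ are recovered in $\F(t)/\LF$ as the $(1,1)$-entries under the identification $E_{1,1}\mathcal{M}_d(\F(t)/\LF)E_{1,1} \cong \F(t)/\LF$. A direct check on cocycle representatives, once all identifications are made explicit, then yields the desired isometry of $\LF$-linking forms.
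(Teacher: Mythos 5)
Your proposal is correct and takes essentially the same approach as the paper: both arguments apply the direct-sum decomposition of Proposition~\ref{prop:OreTwisted} to each of the three maps (Poincar\'e duality, inverse Bockstein, evaluation) composing to give the Blanchfield pairing, then identify the result of tensoring with $\LF^d_{\mathcal{M}}$. You spell out the Morita-theoretic naturality for the evaluation map in somewhat more detail than the paper, which simply records the final formula $(E_{i,i}\cdot x, E_{j,j}\cdot y)\mapsto \Bl_\gamma(x,y)E_{i,j}$, but the underlying reasoning is the same.
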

\begin{proof}
  The first assertion follows immediately from the first assertion of Proposition~\ref{prop:OreTwisted}.
  Next, we deal with the second assertion.
  Using once again the first assertion of Proposition~\ref{prop:OreTwisted}, the linking form $\Bl_{\mathcal{M}_d(\LF),\gamma}(N)$ can canonically be thought of as a linking form
  $$\Bl_{\mathcal{M}_d(\LF),\gamma}(N) \colon \bigoplus_{i=1}^d H_1(N;\LF_\gamma^d) \times \bigoplus_{i=1}^d H_1(N;\LF_\gamma^d)  \to \mathcal{M}_d(\F(t))/\mathcal{M}_d(\LF),$$
  where the above decomposition is obtained by using the matrices \(E_{i,i}\), for \(1 \leq i \leq d\).
  The result now follows by tensoring with $\LF_\mathcal{M}^d$ and using the second assertion of Proposition~\ref{prop:OreTwisted}.
  Indeed, both Blanchfield pairings are defined by composing Poincar\'e duality isomorphisms, inverse Bockstein isomorphisms and evaluation maps.
  The first two maps preserve the direct sum decomposition displayed above. After applying the adjoint of evaluation map, and using the identification from Proposition~\ref{prop:OreTwisted}, $\Bl_{\mathcal{M}_d(\LF)}$ is given by
  \[(E_{i,i} \cdot x, E_{j,j} \cdot y) \mapsto\Bl_{\gamma}(x,y) E_{i,j} \in \mathcal{M}_{d}(\F(t) / \LF),\]
  for \(x,y \in H_{1}(N;\LF^{d}_{\gamma})\).
  This concludes the proof of the proposition.
\end{proof}

Informally, Proposition~\ref{prop:BlanchfieldOreTwisted} implies that several statements that hold for non-commutative Blanchfield pairings also hold for twisted Blanchfield pairings.
This will be used in Theorem~\ref{thm:CablingTheorem}.

\section{Twisted signatures of 3-manifolds and knots}
\label{sec:TwistedSign3ManifKnot}

This section is organized as follows.
In Subsection~\ref{sub:TwistedSignatures}, we introduce our twisted signature function and describe its basic properties, in Subsection~\ref{sub:BordismSignature}, we provide conditions for its bordism invariance and in Subsection~\ref{sub:Satellite}, we investigate its behavior under satellite operations.

\subsection{Definition of the twisted signatures }
\label{sub:TwistedSignatures}
In this short subsection, we define our twisted signature invariants for knots and 3-manifolds.
To this effect, recall that Proposition~\ref{prop:SignatureFunctionAlgebra} associated to a linking form $(M,\lambda)$ a locally constant functions
$\sigma_{(M,\lambda)} \colon S^1 \to \Z $ and $\sigma_{(M,\lambda)}^{av} \colon S^1 \to \Z$ with desirable properties.

\begin{definition}
  \label{def:TwistedSignature}
  Let $N$ be a closed connected oriented $3$-manifold and let $\beta \colon \pi_1(N) \to GL_d(\F[t^{\pm 1}])$ be a unitary $H_1$-null representation.
  The \emph{twisted signature function} $\sigma_{N,\beta} \colon S^1 \to \Z$ of the pair~$(N,\beta)$ is the signature function of the non-singular linking form $\Bl_\beta(N)$.
  The \emph{averaged signature function}~$\sigma_{N,\beta}^{\operatorname{av}}$ is the averaged signature function of $\Bl_\beta(N)$.
\end{definition}

The corresponding invariants for a knot $K$ are obtained by applying Definition~\ref{def:TwistedSignature} to the 0-framed surgery $M_K$.

\begin{definition}
  \label{def:TwistedKnotSignature}
  Let $K$ be an oriented knot and let $\beta \colon \pi_1(M_K) \to GL_d(\F[t^{\pm 1}])$ be a unitary~$H_1$-null representation.
  The \emph{twisted signature function} $\sigma_{K,\beta}\colon S^1\to\Z$ of $K$ is the twisted signature function of~$M_K$ and similarly for the \emph{averaged signature function} $ \sigma_{K,\beta}^{\operatorname{av}}$.
\end{definition}

Before proving properties, we show how Definition~\ref{def:TwistedKnotSignature} recovers the Levine-Tristram signature.
This establishes the fourth property of Theorem~\ref{thm:Recap} from the introduction.

\begin{remark}
  \label{rem:LevineTristram}
  If $\beta$ is the abelianization map $\phi_K$, then $\Bl_\beta(K)$ coincides with the untwisted Blanchfield pairing $\operatorname{Bl}(K)$ and the corresponding signature function recovers the Levine-Tristram signature $\sigma_K$ thanks to~\cite[Lemma 3.2]{BorodzikFriedl}. Note that $\operatorname{Bl}(K)$ is usually defined on the Alexander module~$H_1(X_K;\LF)$, where $X_K=S^3 \setminus \mathcal{N}(K)$ is the complement of a tubular neighborhood~$\mathcal{N}(K)$ of~$K$ in $S^3$.
  It is however well known that the inclusion $X_K \hookrightarrow M_K$ induces a Blanchfield form preserving isomorphism $H_1(X_K;\LF) \cong H_1(M_K;\LF)$. 
\end{remark}


Contrarily to $\sigma_K$, the twisted signature function~$\sigma_{K,\beta}$ is not known to be symmetric (i.e.~$\sigma_{K,\beta}(\overline{\omega})$ is not known to equal~$\sigma_{K,\beta}(\omega)$).
More precisely,  in~\cite{BCP_Alg} we show that signature functions associated to abstract linking forms over \(\LC\) are not symmetric but it remains unknown whether such linking forms can be realized as twisted Blanchfield forms.
Despite this difference, the next proposition shows that the twisted signature shares several properties with its classical counterpart.
This establishes the first item of Theorem~\ref{thm:Recap} from the introduction. 

\begin{proposition}
  \label{prop:LocallyConstant}
  Let $K$ be an oriented knot and let $\beta \colon \pi_1(M_K) \to GL_d(\F[t^{\pm 1}])$ be a unitary $H_1$-null representation.
  \begin{enumerate}
  \item The twisted signature $\sigma_{K,\beta}$ is constant on the complement in~$S^1$ of the zero set of the twisted Alexander polynomial $\Delta_{M_K}^\beta(t):=\operatorname{Ord}(H_1(M_K;\F[t^{\pm 1}]_\beta^d))$.
  \item Let $\overline{K}$ denote the mirror image of $K$. The representation $\beta$ canonically defines a representation on $\pi_1(M_{\overline{K}})$ (which we also denote by $\beta$) and, for all $\omega \in S^1$, we have
    \begin{align*}
      &\Bl_\beta(\overline{K})=-\Bl_\beta(K), \\
      &\sigma_{\overline{K},\beta}=-\sigma_{K,\beta}.
    \end{align*}
  \item Let $K^r$ denote $K$ with its orientation reversed. Assume that $\beta(g)=\phi_K(g)\rho(g)$, where $\rho \colon \pi_1(M_K) \to GL_n(\F)$ is a unitary representation. The representation $\beta$ canonically defines a representation on $\pi_1(M_{K^r})$ (which we also denote by $\beta$) and, for all $\omega \in S^1$, we~have
    \begin{align*}
      & \Bl_{\beta}(K^r)(t) =\Bl_{\beta}(K^r)(t^{-1}) , \\
      & \sigma_{K^r,\beta}(\omega) =\sigma_{K,\beta}(\overline{\omega}).
    \end{align*}
  \end{enumerate}
\end{proposition}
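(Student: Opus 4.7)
The plan is to establish the three items in turn, using the algebraic machinery of Sections~\ref{sec:LinkingFormClassification}--\ref{sec:Signatures} combined with the definition of the twisted Blanchfield pairing from Subsection~\ref{sub:Blanchfield}.

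For item (1), the twisted Blanchfield pairing $\operatorname{Bl}_\beta(K)$ is by definition a non-singular linking form on $H_1(M_K;\LF^d_\beta)$, whose order as an $\LF$-module is precisely the twisted Alexander polynomial $\Delta_{M_K}^\beta(t)$. The conclusion then follows immediately from Proposition~\ref{prop:BasicProperties}(1), which states that the signature function of any non-singular linking form is constant on the complement of the zeroes of the module's order.

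For item (2), the 3-manifold $M_{\overline{K}}$ coincides with $M_K$ as a topological space but carries the opposite orientation. In particular $\pi_1(M_{\overline{K}})=\pi_1(M_K)$, so $\beta$ is defined on $\pi_1(M_{\overline{K}})$ and the twisted (co)chain complexes involved in Definition~\ref{def:Blanchfield} agree. In the composition $\Theta$ defining $\operatorname{Bl}_\beta$, the only ingredient that depends on orientation is the Poincaré duality isomorphism, given by capping with the fundamental class. Since $[M_{\overline{K}}]=-[M_K]$, we deduce $\operatorname{Bl}_\beta(\overline{K})=-\operatorname{Bl}_\beta(K)$. Negating a non-singular linking form replaces each basic summand $\mathfrak{e}(n,k,\eps,\xi,\F)$ of Theorem~\ref{thm:MainLinkingForm} by $\mathfrak{e}(n,k,-\eps,\xi,\F)$, hence swaps the Hodge numbers $\mathcal{P}(n,+1,\xi)\leftrightarrow\mathcal{P}(n,-1,\xi)$. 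By Definition~\ref{def:sigjump} all signature jumps change sign, so Definition~\ref{def:sig_func} yields $\sigma_{\overline{K},\beta}=-\sigma_{K,\beta}$.

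For item (3), we again identify $M_{K^r}$ with $M_K$ at the level of the underlying oriented 3-manifold. Under this identification, reversing the orientation of $K$ reverses the homology class of the meridian, so the abelianizations satisfy $\phi_{K^r}(g)=\phi_K(g)^{-1}$ for all $g\in\pi_1(M_K)$. The hypothesis $\beta=\phi_K\cdot\rho$ with $\rho$ unitary (hence unaffected by the orientation reversal of $K$) therefore produces a representation $\phi_{K^r}\cdot\rho$ on $\pi_1(M_{K^r})$ which corresponds, under our identification, to the representation obtained from $\beta$ by applying the involution $t\mapsto t^{-1}$. On coefficient modules this gives a natural isomorphism $\LF^d_{\phi_{K^r}\rho}\cong(\LF^d_\beta)^\#$. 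Propagating this $\#$-twist through Poincaré duality, the inverse Bockstein homomorphism and the evaluation map appearing in Definition~\ref{def:Blanchfield} yields the announced relation between the two Blanchfield pairings (modulo the typo in the statement, the intended identity being $\operatorname{Bl}_\beta(K^r)(t)=\operatorname{Bl}_\beta(K)(t^{-1})$). The signature equality then follows since for $\omega\in S^1$, substituting $t\mapsto t^{-1}$ and then evaluating at $\omega$ is the same as evaluating the original form at $\omega^{-1}=\overline{\omega}$; by Definition~\ref{def:sig_func} (or by invoking Corollary~\ref{cor:sigissig} when a representing matrix is available) this gives $\sigma_{K^r,\beta}(\omega)=\sigma_{K,\beta}(\overline{\omega})$.

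The main technical obstacle will be the careful bookkeeping of the involution in item (3): one must verify that Poincaré duality is natural with respect to the $\#$-twist (which is a standard but non-trivial check, since both the chain-level cap product and the coefficient module are affected) and that the Bockstein homomorphism likewise intertwines the two setups. Items (1) and (2) are essentially immediate consequences of the algebraic framework already developed.
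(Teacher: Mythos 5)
Your proof is correct and follows essentially the same route as the paper's: item~(1) is a direct appeal to Proposition~\ref{prop:BasicProperties}, item~(2) comes from the sign change in Poincaré duality under an orientation-reversing identification (the paper's ``anti-commuting square'' $h_*(h^*(\xi)\cap[M_K])=-\xi\cap[M_{\overline K}]$), and item~(3) comes from the relation $\phi_{K^r}(\mu_{K^r})=\phi_K(\mu_K)^{-1}$, which on coefficients is the involution $t\mapsto t^{-1}$. One small imprecision: $M_{\overline K}$ does not literally coincide with $M_K$; it is homeomorphic to it via an orientation-reversing homeomorphism induced from a mirror map $h\colon S^3\to S^3$, and all the identifications (including $\pi_1(M_{\overline K})\cong\pi_1(M_K)$ and the transport of $\beta$) should be made through that $h$, as the paper does. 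You also correctly flag the likely typo in the statement of item~(3) (it should read $\Bl_\beta(K^r)(t)=\Bl_\beta(K)(t^{-1})$), and your unwinding of ``$\sigma_{\overline K,\beta}=-\sigma_{K,\beta}$'' via Hodge numbers, Definition~\ref{def:sigjump} and Definition~\ref{def:sig_func} is a helpful expansion of what the paper leaves implicit.
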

\begin{proof}
  The proof of the first property follows by applying
  item \ref{item:S1} of Proposition~\ref{prop:SignatureFunctionAlgebra}
  to the twisted Blanchfield pairing. To prove the second assertion, let $h \colon S^3 \to S^3$ be an orientation reversing homeomorphism such that $h(K)=\overline{K}$. Note that $h$ induces an orientation reversion homeomorphism $h \colon M_K \to M_{\overline{K}}$. A diagram chase involving the definition of the Blanchfield pairing now shows that $h$ induces an isometry $\Bl_\beta(\overline{K})\cong -\Bl_\beta(K)$, the key point being that the square involving Poincar\'e duality anti-commutes thanks to the identity $h_*(h^*(\xi) \cap [M_K])=\xi \cap h_*([M_K])=-\xi \cap [M_{\overline{K}}]$ for any~$\xi \in H^2(M_K;\LF^n_\beta)$. The result on the signature now follows from the definition. The third assertion is a consequence of the identity $\phi_K(\mu_{K^r})=\phi_K(\mu_K)^{-1}$.
\end{proof}

The order of $H_1(M_K;\F[t^{\pm 1}]_\beta^d)$ can be related to the twisted Alexander polynomial of $K$ which is usually defined on the exterior $X_K$ of $K$; see~\cite[Lemma 6.3]{KirkLivingston} and~\cite[Lemma 3]{FriedlVidussiSurvey}. In the abelian case, since $H_1(M_K;\LF) \cong H_1(X_K;\LF)$, the first item of Proposition~\ref{prop:LocallyConstant} (and Remark~\ref{rem:LevineTristram}) recovers the well known fact that the Levine-Tristram signature function $\sigma_K$ is constant on the complement of the zero set of the Alexander polynomial $\Delta_K(t)$. The second item of Proposition~\ref{prop:LocallyConstant} is also a direct generalization of the corresponding property for the Levine-Tristram signature. 

\begin{remark}
  \label{rem:NotSymmetric}
  Since the Levine-Tristram is symmetric, it satisfies $\sigma_{K^r}(\omega)=\sigma_{K}(\omega)$. This does not appear to be the case in general since the signature function of a general complex linking form need not be symmetric.
\end{remark}

Motivated by Remark~\ref{rem:LevineTristram} and Proposition~\ref{prop:LocallyConstant}, the next subsections investigate to what extent the properties of the Levine-Tristram signature generalize to our twisted signatures.

\subsection{Bordism and concordance invariance}
\label{sub:BordismSignature}
In this subsection, we give conditions under which the signatures of 3-manifolds (resp. knots) are bordism (resp. concordance) invariants.
\medbreak

The proof of the next lemma can be found in~\cite[Proposition 2.8]{Letsche} or~\cite[Theorem 4.4]{CochranOrrTeichner}.

\begin{lemma}\label{lem:metabolizer}
  Let \(N_{1},\ldots,N_{k}\) be closed connected oriented \(3\)-manifolds.
  Let \(\beta_{i} \colon \pi_{1}(N_{i}) \to GL_d(\LF)\) be a unitary acyclic representation for $i=1,\ldots, k$.
  Assume that \(W\) is a \(4\)-manifold such that \(\partial W = N_{1} \sqcup \ldots \sqcup N_{k}\) and the two following conditions are satisfied:
  \begin{itemize}
  \item[(a)] for $i=1,\ldots, k, $ there exists a unitary representation \(\gamma \colon \pi_{1}(W) \to GL_d(\LF)\) extending the representations $\beta_{i}$;
  \item[(b)] the following sequence is exact:
    \begin{equation}
      \label{eq:necessary-cond-metabolic-form}
      TH_{2}(W, \partial W; \F[t^{\pm 1}]_{\gamma}^{d}) \xrightarrow{\partial} H_{1}(\partial W;\F[t^{\pm 1}]_{\gamma}^{d}) \xrightarrow{\iota_{\ast}} H_{1}(W; \F[t^{\pm 1}]_{\gamma}^{d}).
    \end{equation}
  \end{itemize}
  Then the linking form \(\oplus_{i=1}^{k} \Bl_{\beta_{i}}(N_{i})\) is metabolic.
\end{lemma}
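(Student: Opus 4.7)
The plan is to identify the metabolizer as $P := \ker\bigl(\iota_{\ast} \colon H_{1}(\partial W; \LF_{\gamma}^{d}) \to H_{1}(W; \LF_{\gamma}^{d})\bigr)$, and invoke hypothesis (b) to rewrite $P = \partial\bigl(TH_{2}(W, \partial W; \LF_{\gamma}^{d})\bigr)$. The first description is convenient for the counting argument, while the second is convenient for isotropy.

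To establish isotropy $P \subseteq P^{\perp}$, for $x, y \in P$ I would choose a lift $\hat{y} \in TH_{2}(W, \partial W; \LF_{\gamma}^{d})$ with $\partial \hat{y} = y$, and set up the commutative diagram obtained from naturality of Poincar\'e--Lefschetz duality for the pair $(W, \partial W)$ and naturality of the Bockstein long exact sequence associated to the coefficient sequence $0 \to \LF \to \F(t) \to \F(t)/\LF \to 0$. A diagram chase, using that $\gamma$ extends each $\beta_{i}$ (so that the evaluation maps on $\partial W$ and on $W$ fit together compatibly), expresses $\Bl_{\partial W}(x, y)$ as the evaluation of a Bockstein-lift of $\hat{y}$ against $\iota_{\ast}(x) \in H_{1}(W; \LF_{\gamma}^{d})$. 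Since $x \in P = \ker(\iota_{\ast})$, this evaluation vanishes.

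To establish $P^{\perp} \subseteq P$ I would use a counting argument on orders of torsion $\LF$-modules. Non-singularity of $\oplus_{i} \Bl_{\beta_{i}}(N_{i})$ yields $|Q| \cdot |Q^{\perp}| = |H_{1}(\partial W; \LF_{\gamma}^{d})|$ for every submodule $Q$, so it suffices to prove $|P|^{2} = |H_{1}(\partial W; \LF_{\gamma}^{d})|$. The exact sequence \eqref{eq:necessary-cond-metabolic-form} gives $|P| = |TH_{2}(W, \partial W; \LF_{\gamma}^{d})| / |\ker(\partial)|$ and $|H_{1}(\partial W; \LF_{\gamma}^{d})| = |P| \cdot |\iota_{\ast}H_{1}(\partial W; \LF_{\gamma}^{d})|$. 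Combining this with the Poincar\'e--Lefschetz duality isomorphisms $H_{2}(W, \partial W; \LF_{\gamma}^{d}) \cong H^{2}(W; \LF_{\gamma}^{d})$, $H_{1}(W; \LF_{\gamma}^{d}) \cong H^{3}(W, \partial W; \LF_{\gamma}^{d})$, $H_{1}(\partial W; \LF_{\gamma}^{d}) \cong H^{2}(\partial W; \LF_{\gamma}^{d})$, and the universal coefficient theorem (which preserves orders of torsion $\LF$-modules since $\LF$ is a PID), the desired equality $|P|^{2} = |H_{1}(\partial W; \LF_{\gamma}^{d})|$ falls out of a straightforward book-keeping.

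The main obstacle is making the isotropy diagram chase fully rigorous. One must place the Bockstein homomorphisms on $\partial W$ and on the pair $(W, \partial W)$ into a single commutative diagram with the Poincar\'e--Lefschetz duality isomorphisms and the evaluation maps (on both spaces, with respect to the $\#$-twisted coefficient pairing from Example~\ref{ex:UsualCoeff}), and verify the signs and $\#$-conventions. Once this diagram is established, the vanishing of $\iota_{\ast}(x)$ immediately kills the Blanchfield pairing. The order-counting in the reverse inclusion is essentially formal, but requires care in checking that all the relevant modules are torsion, which follows from acyclicity of the $\beta_{i}$ on the $\partial W$ side together with hypothesis (b) controlling the image of $TH_{2}$ on the $W$ side.
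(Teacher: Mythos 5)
The paper does not prove this lemma at all; it simply cites Letsche's Proposition~2.8 and Cochran--Orr--Teichner's Theorem~4.4. So your proposal is being compared against those references rather than against an in-paper argument. Your plan is the standard one from those sources: take $P=\ker(\iota_{*})$, use hypothesis~(b) to rewrite $P$ as $\partial(TH_2(W,\partial W))$, get isotropy from the commutativity of Poincar\'e--Lefschetz duality and the Bockstein long exact sequence with the evaluation maps, and get $P^{\perp}\subseteq P$ from an order count.

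One remark on the counting step, which you describe as ``straightforward book-keeping.'' The identity you need is $|\ker(\iota_{*})|=|\operatorname{im}(\iota_{*})|$, and this does not follow merely from listing the three duality isomorphisms and the universal coefficient theorem as if they commute with orders in the obvious way. The actual mechanism is the self-duality of the long exact sequence of $(W,\partial W)$: under Poincar\'e--Lefschetz duality the triple $\partial\colon H_2(W,\partial W)\to H_1(\partial W)\xrightarrow{\iota_{*}} H_1(W)$ is identified (on torsion) with the $\operatorname{Ext}^1$-dual of itself read backwards, and for a map $f$ of finitely generated torsion modules over a PID the dual $f^{\vee}$ satisfies $|\operatorname{im}(f^{\vee})|=|\operatorname{im}(f)|$. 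That is the fact from which $|\operatorname{im}(\partial|_{T})|=|\operatorname{im}(\iota_{*})|$, and hence $|P|^{2}=|H_1(\partial W;\LF_{\gamma}^{d})|$, actually follows. Your sketch is correct, but if you intend to write this out you should make that mechanism explicit rather than appealing to generic book-keeping, since a naive counting using only the three displayed isomorphisms goes in a circle.
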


The following result uses Lemma~\ref{lem:metabolizer} to provide (slightly less restrictive) assumptions under which the twisted Blanchfield pairing is metabolic.

\begin{proposition}
  \label{prop:MetabolicAcyclic}
  Let \(N = N_{1} \sqcup \ldots \sqcup N_{k}\) be a disjoint union of closed \(3\)-manifolds and let \(\beta_{i} \colon \pi_{1}(N_{i}) \to~GL_d(\LF)\) be a unitary acyclic representation for $i=1,\ldots, k$.
  If $N$ bounds a compact \(4\)-manifold~\(W\) and if the \(\beta_{i}\) extend to a unitary acyclic representation \(\gamma \colon \pi_{1}(W) \to~GL_d(\F[t^{\pm 1}])\), then \(\oplus_{i=1}^{k}\Bl_{\beta_{i}}(N_{i})\) is metabolic and, in particular, the sum $\sum_{i=1}^{k}\sigma^{\text{av}}_{N_i,\beta_{i}}$ of averaged signature functions is zero.
\end{proposition}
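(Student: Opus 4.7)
The plan is to reduce this proposition to Lemma~\ref{lem:metabolizer} by verifying its two hypotheses. Hypothesis (a) (the extension of the $\beta_{i}$ to $\gamma$) is given, so the bulk of the work amounts to checking condition (b), i.e.\ exactness of
\[
TH_{2}(W,\partial W;\LF_{\gamma}^{d}) \xrightarrow{\partial} H_{1}(\partial W;\LF_{\gamma}^{d}) \xrightarrow{\iota_{*}} H_{1}(W;\LF_{\gamma}^{d}).
\]
First I would note that acyclicity of each $\beta_{i}$ ensures $H_{1}(N_{i};\LF_{\beta_{i}}^{d})$ is $\LF$-torsion, so the pairings $\Bl_{\beta_{i}}(N_{i})$ are well defined in the sense of Subsection~\ref{sub:Blanchfield}.

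Next I would invoke the long exact sequence of the pair $(W,\partial W)$ with coefficients in $\LF_{\gamma}^{d}$. Since $\gamma$ is acyclic, the modules $H_{1}(W;\LF_{\gamma}^{d})$ and $H_{2}(W;\LF_{\gamma}^{d})$ are $\LF$-torsion; since each $\beta_{i}$ is acyclic and $\gamma$ restricts to $\beta_{i}$ on $N_{i}$, the module $H_{1}(\partial W;\LF_{\gamma}^{d}) = \bigoplus_{i=1}^{k} H_{1}(N_{i};\LF_{\beta_{i}}^{d})$ is also torsion. The long exact sequence then forces $H_{2}(W,\partial W;\LF_{\gamma}^{d})$ to be torsion, so $TH_{2}(W,\partial W;\LF_{\gamma}^{d}) = H_{2}(W,\partial W;\LF_{\gamma}^{d})$ and the desired exactness in \eqref{eq:necessary-cond-metabolic-form} becomes a literal segment of the long exact sequence of the pair.

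With both hypotheses of Lemma~\ref{lem:metabolizer} in place, we conclude that $\bigoplus_{i=1}^{k}\Bl_{\beta_{i}}(N_{i})$ is metabolic. The statement about signatures is then immediate from Corollary~\ref{cor:metabolic_signature}: a metabolic linking form has vanishing averaged signature function, and averaged signatures are additive under orthogonal direct sums, so $\sum_{i=1}^{k}\sigma^{\operatorname{av}}_{N_{i},\beta_{i}} \equiv 0$.

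There is no real obstacle here, just a careful book-keeping point: the acyclicity of $\gamma$ upgrades the ``torsion part'' $TH_{2}(W,\partial W;\LF_{\gamma}^{d})$ in the statement of Lemma~\ref{lem:metabolizer} to the full homology module, which is exactly what makes condition (b) automatic. The only subtlety worth double-checking is that the acyclicity of $\gamma$ on $W$ together with the acyclicity of the $\beta_{i}$ on the $N_{i}$ really does imply the torsion-ness of $H_{2}(W,\partial W;\LF_{\gamma}^{d})$, which follows formally from the long exact sequence as above.
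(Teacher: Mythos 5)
Your proposal is correct and follows the same strategy as the paper: verify the two hypotheses of Lemma~\ref{lem:metabolizer} and then invoke Corollary~\ref{cor:metabolic_signature}. The only deviation is in how you establish that $H_{2}(W,\partial W;\LF_{\gamma}^{d})$ is $\LF$-torsion: you read it off directly from the long exact sequence of the pair (sandwiching it between the torsion modules $H_{2}(W;\LF_{\gamma}^{d})$ and $H_{1}(\partial W;\LF_{\gamma}^{d})$), whereas the paper first notes that $H_{*}(W;\F(t)_{\gamma}^{d})=0$, uses Poincar\'e duality and the universal coefficient theorem to get $H_{*}(W,\partial W;\F(t)_{\gamma}^{d})=0$, and then concludes torsionness; your version is marginally more elementary since it avoids duality, but the substance is identical.
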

\begin{proof}
  Since $\gamma$ is acyclic,  $H_*(W;\F(t)_\gamma^d)$ vanishes.
  Poincar\'e duality and the universal coefficient theorem (where the underlying ring is the PID $R=\LF$) now imply that $H_*(W,N;\F(t)_\gamma^d)$ also vanishes.
  Since the $\beta_i$ are acyclic, the homology~$\F[t^{\pm 1}]$-modules of $N,W$ and $(W,N)$ are $\F[t^{\pm 1}]$-torsion and therefore the sequence~(\ref{eq:necessary-cond-metabolic-form}) is exact.
  The proposition now follows by applying Lemma~\ref{lem:metabolizer} (to deduce that $\oplus_{i=1}^{k}\Bl_{\beta_{i}}(N_{i})$ is metabolic) and 
  item~\ref{item:S5} of Proposition~\ref{prop:SignatureFunctionAlgebra} (to obtain that the sum of the signature functions is zero).
\end{proof}

Next, suppose that $K$ and $K'$ are two concordant knots in $S^3$ and let $M_K$ and $M_{K'}$ denote their respective $0$-framed surgeries.
Given a locally flat concordance \(\Sigma \subset S^{3} \times I\) from \(K\) to \(K'\), we can construct a cobordism \(W_{\Sigma}\) from \(M_{K}\) to \(M_{K'}\).
Indeed, let \(\mathcal{N}(\Sigma)\) denote a tubular neighborhood of~\(\Sigma\) (which can be identified with \(\mathcal{N}(\Sigma) \cong S^{1} \times D^{2} \times I\)). %
\footnote{The existence of such neigbhorhoods is due to Freedman-Quinn~\cite[Section 9]{FreedmanQuinn}; see also~\cite[Theorem 6.8]{FriedlNagelOrsonPowell}.}
  The desired cobordism is then obtained by setting
\[W_{\Sigma} = ((S^{3} \times I) \setminus \mathcal{N}(\Sigma)) \cup_{h} S^{1} \times D^{2} \times I,\]
where \(h \colon S^{1} \times S^{1} \times I \to S^{1} \times S^{1} \times I\) is the homeomorphism defined by \(h(x,y,z) = (y,x,z)\).
The next result shows that the averaged twisted signature is a concordance invariant as long as the corresponding representations extend through the complement of the concordance to an acyclic representation.

\begin{proposition}\label{prop:witt-equiv-concordance}
  Let \(K_{1}, K_{2}\) be two concordant knots in $S^3$, and let \(\beta_{i} \colon \pi_{1}(M_{K_{i}}) \to GL_d(\F[t^{\pm 1}])\) be a unitary acyclic representation for \(i=1,2\).
  Assume there is a concordance \(\Sigma \subset S^{3} \times I\)  from~\(K_{1}\) to~\(K_{2}\) and a unitary representation \(\gamma \colon \pi_{1}(W_{\Sigma}) \to GL_d(\F[t^{\pm 1}])\) such that
  \begin{itemize}
  \item[(a)] the representations \(\beta_{i}\) factor as \(\gamma \circ (\iota_{i})_{\ast}\), where \(\iota_{i} \colon M_{K_{i}} \hookrightarrow W_{\Sigma}\) is the inclusion for \(i=1,2\);
  \item[(b)] the representation $\gamma$ is acyclic.
  \end{itemize}
  Then the twisted Blanchfield pairings \(\Bl_{\beta_{1}}(K_1)\) and \(\Bl_{\beta_{2}}(K_2)\) are Witt equivalent and, in particular, the averaged signature functions \(\sigma^{av}_{K_{1},\beta_{1}}\) and $\sigma^{av}_{K_{2},\beta_{2}}$ agree.
\end{proposition}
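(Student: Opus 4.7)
The plan is to reduce the statement to Proposition~\ref{prop:MetabolicAcyclic} applied to the concordance cobordism \(W_\Sigma\), viewing it as a \(4\)-manifold whose boundary is \(-M_{K_1} \sqcup M_{K_2}\) (with the standard cobordism orientation convention). First, I would verify that the data \((W_\Sigma, \gamma)\) satisfies the hypotheses of Proposition~\ref{prop:MetabolicAcyclic}: hypothesis~(a) there says that \(\gamma\) restricts to \(\beta_i\) on each boundary component, which is exactly assumption~(a) of the present proposition (together with the canonical extension of \(\beta_i\) to the oppositely oriented manifold); hypothesis~(b) of Proposition~\ref{prop:MetabolicAcyclic} is the acyclicity of \(\gamma\), which is assumption~(b) here. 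Applying the proposition would then immediately give that
\[
-\Bl_{\beta_1}(K_1) \oplus \Bl_{\beta_2}(K_2)
\]
is metabolic.

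The next step is to observe that, by the definition of the Witt group of linking forms (Definition~\ref{def:metabolic_and_so_on}), the metabolicity of \(-\Bl_{\beta_1}(K_1) \oplus \Bl_{\beta_2}(K_2)\) is precisely the statement that \(\Bl_{\beta_1}(K_1)\) and \(\Bl_{\beta_2}(K_2)\) represent the same class in \(W(\F(t),\LF)\), i.e.\ are Witt equivalent. From here the statement about averaged signature functions is a formal consequence of Proposition~\ref{prop:avsig}, which asserts that \(\sigma^{\mathrm{av}}\) descends to a well-defined function on \(W(\F(t),\LF)\); equivalently one can invoke Corollary~\ref{cor:metabolic_signature} directly to the metabolic form above.

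The main obstacle I anticipate is the careful bookkeeping of orientations. Reversing the orientation of a closed oriented \(3\)-manifold \(N\) flips the sign of the Poincar\'e duality isomorphism used in the definition of \(\Bl_\beta(N)\), and hence replaces \(\Bl_\beta(N)\) with \(-\Bl_\beta(N)\); this was recorded for knots in Proposition~\ref{prop:LocallyConstant}(2), and the same diagram chase works verbatim for general closed \(3\)-manifolds. Once this sign convention is justified, identifying \(\partial W_\Sigma\) with \(-M_{K_1} \sqcup M_{K_2}\) yields exactly the direct sum \(-\Bl_{\beta_1}(K_1) \oplus \Bl_{\beta_2}(K_2)\) appearing above, and the argument goes through. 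A secondary point to check (but essentially automatic from the construction of \(W_\Sigma\) as a concordance exterior with a solid torus glued in) is that \(\gamma\) really is a unitary representation of \(\pi_1(W_\Sigma)\) extending both \(\beta_1\) and \(\beta_2\) in a compatible way; here one uses that the inclusions \(\iota_i\) induce the hypothesized factorizations by assumption~(a).
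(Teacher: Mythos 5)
Your proposal is correct and follows essentially the same route as the paper: apply Proposition~\ref{prop:MetabolicAcyclic} to the cobordism $W_\Sigma$ to deduce that the direct sum of Blanchfield forms over $\partial W_\Sigma$ is metabolic, then use the orientation-reversal identity $\Bl_\beta(-N) = -\Bl_\beta(N)$ (the point you flag as a bookkeeping concern, and which the paper handles by the same reasoning as Proposition~\ref{prop:LocallyConstant}) to conclude Witt equivalence, from which the averaged signature statement follows. The only superficial difference is whether one orients $\partial W_\Sigma$ as $M_{K_1} \sqcup -M_{K_2}$ (as in the paper) or $-M_{K_1} \sqcup M_{K_2}$; the two are interchangeable.
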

\begin{proof}
  Set \(N = M_{K_{1}} \sqcup -M_{K_{2}}\).
  By our assumptions, \(W_{\Sigma}\) is a null-bordism for \(N\) and satisfies the conditions of Proposition~\ref{prop:MetabolicAcyclic}.
  Therefore, \(\Bl_{\beta_{1}}(M_{K_{1}}) \oplus \Bl_{\beta_{2}}(-M_{K_{2}})\) is metabolic.
  Since \(\Bl_{\beta_{2}}(-M_{K_{2}}) = -\Bl_{\beta_{2}}(M_{K_{2}})\) (use the same reasoning as in the proof of Proposition~\ref{prop:LocallyConstant}), it follows that \(\Bl_{\beta_{1}}(K_1)\) and \(\Bl_{\beta_{2}}(K_2)\) are Witt equivalent and, consequently the averaged signature functions agree: \(\sigma^{\operatorname{av}}_{K_{1},\beta_{1}}=\sigma^{\operatorname{av}}_{K_{2},\beta_{2}}\).
\end{proof}

The same proof yields a result for slice knots with a representation which extends to an acylic representation over the slice disk exterior. We record this result for completeness, as it was mentioned in the second item of Theorem~\ref{thm:Recap} from the introduction.

\begin{proposition}\label{prop:SignatureSlice}
  Let \(K\) be a slice knot and let \(\beta \colon \pi_{1}(M_K) \to GL_d(\F[t^{\pm 1}])\) be a unitary acyclic representation. If $D\subset D^4 $ is a slice disk for $K$ and there is a unitary acyclic representation \(\gamma \colon \pi_{1}(D^4 \setminus \mathcal{N}(D)) \to GL_d(\F[t^{\pm 1}])\) extending $\beta$,  then the twisted Blanchfield pairings \(\Bl_{\beta}(K)\) is metabolic and, in particular, the averaged signature function \(\sigma^{av}_{K,\beta}\) is zero.
\end{proposition}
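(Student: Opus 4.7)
The plan is to reduce Proposition~\ref{prop:SignatureSlice} directly to Proposition~\ref{prop:MetabolicAcyclic}, in precise analogy with the proof of Proposition~\ref{prop:witt-equiv-concordance}. The only nontrivial identification to make at the outset is topological, namely that the exterior of a slice disk provides a null-bordism for the zero-framed surgery on the knot.

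More precisely, I would first let $W := D^4 \setminus \mathcal{N}(D)$ and verify that $\partial W \cong M_K$. Writing $\mathcal{N}(D) \cong D \times D^2$, its boundary decomposes as $(\partial D \times D^2) \cup (D \times \partial D^2) = (K \times D^2) \cup (D \times S^1)$. Removing the interior of $\mathcal{N}(D)$ from $D^4$ thus produces a $4$-manifold whose boundary is
\[
\bigl( S^3 \setminus (K \times \operatorname{int} D^2) \bigr) \cup_{K \times S^1} (D \times S^1) \;=\; X_K \cup_{\partial X_K} (D \times S^1).
\]
Here the gluing identifies $\partial D \subset D \times S^1$ with the zero-framed longitude of $K$, so the disk $D \times \{\mathrm{pt}\}$ precisely caps off the longitude. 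Hence $\partial W \cong M_K$, as in the standard construction.

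Next, I would observe that the hypotheses of Proposition~\ref{prop:MetabolicAcyclic} are satisfied with $k=1$, $N_1 = M_K$, $\beta_1 = \beta$, and $\gamma$ as given: by assumption $\gamma$ is unitary acyclic and restricts to $\beta$ along the inclusion $M_K \hookrightarrow W$. Applying that proposition therefore yields immediately that $\operatorname{Bl}_\beta(K)$ is metabolic. The statement about the averaged signature then follows from Corollary~\ref{cor:metabolic_signature}, which characterizes metabolicity by the vanishing of $\sigma^{\operatorname{av}}_{(M,\lambda)}$.

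There is no serious obstacle: the argument is essentially a one-line application of Proposition~\ref{prop:MetabolicAcyclic}. The only point that warrants explicit mention is the identification $\partial W \cong M_K$, which replaces the cobordism $W_\Sigma$ used for concordances by the slice-disk exterior. Everything else (acyclicity of $\gamma$ implying exactness of the relevant sequence, the torsion property of the homology modules involved, and the passage from metabolicity to vanishing of the averaged signature) is already packaged in the cited results.
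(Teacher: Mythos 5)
Your proposal is correct and matches the paper's intended argument: the paper states Proposition~\ref{prop:SignatureSlice} immediately after remarking that ``the same proof yields a result for slice knots,'' i.e.\ it is the $k=1$ case of Proposition~\ref{prop:MetabolicAcyclic} applied to the slice-disk exterior, followed by Corollary~\ref{cor:metabolic_signature}, exactly as you wrote. The identification $\partial(D^4 \setminus \mathcal{N}(D)) \cong M_K$ that you spell out is the only geometric input and is handled correctly.
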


\subsection{Satellite formulas for twisted signatures}
\label{sub:Satellite}

In this subsection, we use a result of Friedl-Leidy-Nagel-Powell~\cite{FriedlLeidyNagelPowell} to understand the behavior of the twisted Blanchfield pairings and signatures under various satellite operations. In particular, these results provide twisted generalizations of well known theorems for the classical Blanchfield pairing and Levine-Tristram signature.
\medbreak

Let \(Y\) be a \(3\)-manifold with empty or toroidal boundary.
Suppose that we are given a simple closed curve \(\eta\) in \(Y\).
If \(K \subset S^{3}\) is a knot, then an \emph{infection} of~\(Y\) by \(K\) is a \(3\)-manifold \(Y_{K}\) obtained by gluing \(Y_{K} = (Y \setminus \mathcal{N}(\eta)) \cup_{\partial} S^{3} \setminus \mathcal{N}(K)\), where we glue a zero-framed longitude of \(K\) to the meridian of \(\eta\) and some longitude of \(\eta\) to the meridian of \(K\).
Notice that the ambiguity in the choice of the longitude of \(\eta\) may lead to different outcomes, however, as pointed out in~\cite{FriedlLeidyNagelPowell}, the resulting twisted Blanchfield pairing does not depend on this choice.
Note that if $Y$ has a boundary, then so does $Y_K$.  In this case, we use Remark~\ref{rem:Boundary} which describes twisted Blanchfield pairings for $3$-manifolds with boundary.

Let \(P \subset S^{3}\) be a knot and let \(M_{P}\) be the zero-surgery on \(P\).
If \(K \subset S^{3}\) is another knot and \(\eta\) is a simple closed curve in the complement of \(P\), we can form the \emph{satellite knot} \(P(K,\eta)\) with \emph{pattern}~\(P\), \emph{companion} \(C\) and \emph{infection curve} \(\eta\) by looking at the image of \(P\) under the diffeomorphism \((S^{3}\setminus \mathcal{N}(\eta)) \cup_{\partial} (S^{3}  \setminus \mathcal{N}(K)) \cong S^{3}\), where the gluing of the exteriors of \(\eta\) and \(K\) identifies the meridian of \(\eta\) with the zero-framed longitude of \(K\) and vice-versa.

From this description, one can see that the zero-surgery on the satellite knot \(P(K,\eta)\) can be obtained by infection of~\(M_{P}\) by \(K\) along \(\eta \subset S^{3} \setminus P \subset M_{P}\):
\begin{equation}
  \label{eq:DecompoSurgerySatellite}
  M_{P(K,\eta)}=M_P \setminus \mathcal{N}(\eta) \cup_\partial S^3 \setminus \mathcal{N}(K).
\end{equation}
Let \(\mu_{\eta}\) denote a meridian of \(\eta\).
Our goal in this section is to express the Blanchfield form~\(\Bl_\gamma(M_{P(K,\eta)})\) in terms of the twisted Blanchfield forms on \(M_{P}\) and \(M_{K}\).
Using~\eqref{eq:DecompoSurgerySatellite}, notice that given a representation \(\gamma \colon \pi_{1}(M_{P(K,\eta)}) \to GL_d(\F[t^{\pm1}])\), there are induced representations
\[\gamma_{P} \colon \pi_{1}(M_{P} \setminus \mathcal{N}(\eta)) \to GL_d(\F[t^{\pm1}]), \quad \gamma_{K} \colon \pi_{1}(S^{3} \setminus \mathcal{N}(K)) \to GL_d(\F[t^{\pm1}]).\]
In general however, there is no guarantee that these representations can be extended to representations of the fundamental groups of \(M_{P}\) and \(M_{K}\).

\begin{remark}
  \label{rem:EtaRegular}
  In fact, \(\gamma_{P}\) and \(\gamma_{K}\) extend to representations of the respective zero-surgeries if and only if \(\gamma(\mu_{\eta}) = 1\).
  This follows from the following two observations.
  Firstly, \(\mu_{\eta}\) is identified with the zero-framed longitude of \(K\).
  Secondly, by van Kampen's theorem, the inclusion \(M_{P} \setminus \mathcal{N}(\eta) \hookrightarrow M_{P}\) induces a surjection $\pi_{1}(M_{P} \setminus \mathcal{N}(\eta)) \twoheadrightarrow \pi_{1}(M_{P})$ with kernel normally generated by~\(\mu_{\eta}\).
\end{remark}

We say that a representation \(\gamma \colon \pi_{1}(M_{P(K,\eta)}) \to GL_d(\F[t^{\pm1}])\) is \(\eta\)-\emph{regular} if \(\gamma(\mu_{\eta}) = \id\) and \(\det(\id - \gamma(\eta)) \neq 0\).
As explained in Remark~\ref{rem:EtaRegular}, if $\gamma$ is $\eta$-regular, then it induces well-defined representations $\gamma_P$ and $\gamma_K$ on $\pi_1(M_P)$ and $\pi_1(M_K)$.

Before proceeding to the next theorem,  it is necessary to recall some terminology from~\cite[Section~4.1]{BCP_Alg}. %
  If \((M,\lambda)\) is a linking form, then, for any submodule \(L \subset M\), 
  the orthogonal complement of \(L\) is defined as \(L^{\perp} = \{x \in M  \mid  \ \lambda(x,y) = 0 \ \forall {y \in L}  \}\). %
  We say that \(L\) is a \emph{sublagrangian} (or \emph{isotropic}) submodule if \(L \subset L^{\perp}\). %
  For a sublagrangian submodule \(L \subset M\),  the \emph{sublagrangian reduction} of \(M \) by \( L \) is defined as \((L^{\perp}/L, \lambda_{L})\), where \(\lambda_{L}(x+L,y+L) := \lambda(x,y)\). %
Note that~$\lambda$ and $\lambda_L$ are Witt equivalent and  therefore by Proposition~\ref{prop:SignatureFunctionAlgebra}
  the averaged signature functions of~\((M,\lambda)\) and its sublagrangian reduction are equal.
The following theorem describes the twisted Blanchfield pairing of satellite knots and proves Theorem~\ref{thm:SatelliteFormulaIntro} from the introduction.
\begin{theorem}
  \label{thm:CablingTheorem}
  Let $K,P$ be knots and let $\eta$ be a simple closed curve in $S^3 \setminus P \subset M_P$.
  Let \(\gamma \colon \pi_{1}(M_{P(K,\eta)}) \to GL_d(\F[t^{\pm1}])\) be a unitary acyclic \(\eta\)-regular representation.
  Let \(T \subset M_{P(K,\eta)}\) denote the common boundary of \(M_{P}\setminus \mathcal{N}(\eta)\) and \(S^{3} \setminus \mathcal{N}(K)\).
  Define
  \[L = \operatorname{im}(H_{1}(T;\LF_{\gamma}^{d}) \to H_{1}(M_{P(K,\eta)};\LF^{d}_{\gamma})).\]
  The following assertions hold:
  \begin{enumerate}
  \item The representations $\gamma_P$ and $\gamma_K$ are unitary and acyclic.
  \item The module \(L\) is isotropic and \(\ord L\) divides \(\det(1-\gamma(\eta))\).
  \item The sublagrangian reduction of \(\Bl_\gamma(P(K,\eta))\) with respect to \(L\) is isometric to the direct sum \(\Bl_{\gamma_{P}}(P) \oplus \Bl_{\gamma_K}(K)\). In particular \(\Bl_\gamma(P(K,\eta))\) is Witt equivalent to $\Bl_{\gamma_{P}}(P) \oplus \Bl_{\gamma_K}(K)$.
  \end{enumerate}
\end{theorem}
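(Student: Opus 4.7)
The plan is to exploit the Mayer--Vietoris sequence arising from the decomposition $M_{P(K,\eta)}=(M_P\setminus\mathcal{N}(\eta))\cup_T(S^3\setminus\mathcal{N}(K))$ with coefficients twisted by $\gamma$, and then to apply Theorem~\ref{thm:isoprojection} to upgrade the resulting homological information to an isometry of linking forms after sublagrangian reduction. The $\eta$-regularity hypothesis plays two distinct roles: the condition $\gamma(\mu_\eta)=1$ together with Remark~\ref{rem:EtaRegular} guarantees that the restricted representations descend to $\gamma_P$ and $\gamma_K$, while $\det(1-\gamma(\eta))\neq 0$ will control the twisted homology of the splitting torus $T$. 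An alternative path would be to apply the non-commutative satellite formula of~\cite{FriedlLeidyNagelPowell} over the Ore ring $\mathcal{M}_d(\LF)$ (Example~\ref{ex:MatrixOre}) and then transfer via Proposition~\ref{prop:BlanchfieldOreTwisted}, but working directly with $\LF$-coefficients keeps the application of Theorem~\ref{thm:isoprojection} transparent.

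For assertion~(1), unitarity of $\gamma_P$ and $\gamma_K$ is immediate from unitarity of $\gamma$. For acyclicity, a direct computation on the cellular chain complex of $T^{2}$ using $\gamma(\mu_\eta)=1$ and $\det(1-\gamma(\eta))\neq 0$ yields $H_{*}(T;\F(t)^d_\gamma)=0$, so Mayer--Vietoris forces both exteriors $M_P\setminus\mathcal{N}(\eta)$ and $S^3\setminus\mathcal{N}(K)$ to be $\F(t)$-acyclic. Comparing each exterior to its zero-surgery via the long exact sequence of the pair --- the relative homology is supported on a tubular neighborhood of $\eta$ (respectively $K$) and is annihilated by $1-\gamma_P(\mu_\eta)=0$ (respectively by the zero-framing of $K$) --- concludes the proof.

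For assertion~(2), the same direct chain-level computation shows
\[H_0(T;\LF^d_\gamma)\cong H_1(T;\LF^d_\gamma)\cong \LF^d/(1-\gamma(\eta))\LF^d,\]
both of order $\det(1-\gamma(\eta))$ up to units. Since $L$ is a quotient of $H_1(T;\LF^d_\gamma)$ via Mayer--Vietoris, its order divides $\det(1-\gamma(\eta))$. Isotropicity of $L$ reflects the standard geometric principle that a bicollared separating torus produces self-annihilating classes in a Blanchfield pairing: any two classes in $L$ admit representative cycles that can be parallel push-offs of cycles on $T$ into opposite sides, so the cup product entering the definition of $\Bl_\gamma$ can be represented by cochains supported in disjoint open sets, and hence vanishes.

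For assertion~(3), apply Theorem~\ref{thm:isoprojection} with $M':=H_1(M_{P(K,\eta)};\LF^d_\gamma)$ equipped with $\Bl_\gamma(P(K,\eta))$, $M'':=H_1(M_P;\LF^d_{\gamma_P})\oplus H_1(M_K;\LF^d_{\gamma_K})$ equipped with $\Bl_{\gamma_P}(P)\oplus \Bl_{\gamma_K}(K)$, $M:=\ker(\partial_{MV}\colon M'\to H_0(T;\LF^d_\gamma))$ with the inclusion $\iota\colon M\hookrightarrow M'$, and $\pi\colon M\to M''$ induced by the inclusions $M_P\setminus\mathcal{N}(\eta)\hookrightarrow M_P$ and $S^3\setminus\mathcal{N}(K)\hookrightarrow M_K$. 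Both $M'$ and $M''$ are non-singular by assertion~(1) and the closedness of the relevant $3$-manifolds; $\pi$ is surjective (via the pair sequences for the two fillings), a morphism of linking forms by naturality of the Blanchfield construction with respect to inclusions, and satisfies $\ker(\pi)=L$ because $T$-classes die after filling in $\mathcal{N}(\eta)$ and $\mathcal{N}(K)$. The hard part will be verifying the order identity~\eqref{eq:order}, namely $\ord(M)\ord(M)^\#\doteq\ord(M')\ord(M'')$: this should follow from multiplicativity of orders along the Mayer--Vietoris sequence of the satellite and the pair sequences for the two fillings, with the $\det(1-\gamma(\eta))$ factors on both sides canceling thanks to the computation of $H_{*}(T;\LF^d_\gamma)$ above. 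Once \eqref{eq:order} is in hand, Theorem~\ref{thm:isoprojection} identifies $(M'',\lambda'')$ with the sublagrangian reduction of $M'$ along $\iota(L)$, and Proposition~\ref{prop:Reduction} then yields the Witt equivalence.
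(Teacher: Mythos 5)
Your overall strategy coincides with the paper's: use the Mayer--Vietoris decomposition, compute $H_*(T;\LF^d_\gamma)$, and apply Theorem~\ref{thm:isoprojection}. Your module $M=\ker(\partial_{MV})$ is isomorphic to the quotient $A:=(H_1(M_P\setminus\mathcal{N}(\eta))\oplus H_1(S^3\setminus\mathcal{N}(K)))/\ker(\psi)$ that the paper works with, so the two setups for Theorem~\ref{thm:isoprojection} are interchangeable. However, two points require correction.

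First, you present the Friedl--Leidy--Nagel--Powell satellite formula as an ``alternative path'' that can be avoided by ``working directly with $\LF$-coefficients.'' This is a misconception: the paper \emph{does} work with $\LF$-coefficients, and it invokes \cite[Theorem~1.1]{FriedlLeidyNagelPowell} (transferred to the twisted setting via Proposition~\ref{prop:BlanchfieldOreTwisted}) precisely to establish that the Mayer--Vietoris map $\psi$, and hence your $\pi$, is a morphism of linking forms. Your appeal to ``naturality of the Blanchfield construction with respect to inclusions'' is exactly the nontrivial content of that theorem --- the Blanchfield pairing on a bounded $3$-manifold such as $M_P\setminus\mathcal{N}(\eta)$ (Remark~\ref{rem:Boundary}) does not tautologically intertwine with the one on the closed union or on the closed filling, and this compatibility must be proved. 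Second, your sketch of the order identity~\eqref{eq:order} is optimistic: the cancellation is not just of the $\det(1-\gamma(\eta))=\ord H_1(T)$ factor. The actual bookkeeping (the paper's computation of $\ord(A)\ord(A)^\#$) must track $\ord H_0(T)$ together with $\ord H_0(M_P)$, $\ord H_0(M_K)$, $\ord H_0(M_{P(K,\eta)})$ and use the duality identity $\ord H_2(M_{P(K,\eta)})\doteq\ord H_0(M_{P(K,\eta)})^\#$, the relation $\ord H_1(T)\doteq\ord H_0(T)^\#$, and $\ord H_1(M_{P(K,\eta)})\doteq\ord H_1(M_{P(K,\eta)})^\#$; none of these $H_0$ terms are \emph{a priori} trivial since the representations need not have $H_0=0$. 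Finally, your isotropicity argument via disjointly supported representatives is in the right spirit but imprecise for the $\F(t)/\LF$-valued pairing; the cleaner route (used in the paper) is to observe that the generating classes $[e_i\otimes\mu_\eta]$ of $L$ come from the boundary and hence lie in the radical of the Blanchfield forms of the two pieces, so $\psi$ being a morphism of linking forms forces $L$ to be isotropic.
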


The proof relies on the following result from~\cite[Theorem~4.13]{BCP_Alg} whose statement we recall for the readers' convenience.
  \begin{theorem}\label{thm:isoprojection}
    Let $(M',\lambda')$ and $(M'',\lambda'')$ be two non-singular linking forms over $\LF$, let $M$ be a $\LF$-module and let 
    $\iota\colon M\to M'$ be a monomorphism. Write $\lambda$ for the form on~$M$ induced by~$\iota$.
    Suppose that $\pi\colon(M,\lambda)\to(M'',\lambda'')$ is a surjective morphism of linking forms and
    set $L:=\ker(\pi)$ and $L':=\iota(L)$. Then 
    $\ord(M)\ord(M)^\#$ divides $\ord(M')\ord(M'')$. Moreover, if
    \begin{equation}\label{eq:order}\ord(M)\ord(M)^\#\doteq \ord(M'')\ord(M'),
    \end{equation}
    then the following statements hold:
    \begin{enumerate}
    \item $\iota(M)={L'}^{\perp}$;
    \item The linking form $(M'',\lambda'')$ is isometric to the sublagrangian reduction of $M'$ with respect to~$L'$. In particular $(M',\lambda')$ and~$(M'',\lambda'')$ are Witt equivalent.
    \end{enumerate}
  \end{theorem}
\color{black}

\begin{proof}[Proof of Theorem~\ref{thm:CablingTheorem}]
  During this proof, in order to simplify the notation, we will omit coefficients. 
  As we mentioned in the statement, the torus \(T\) can be described either as \(\partial (M_{P} \setminus \mathcal{N}(\eta))$ or as $ \partial (S^{3} \setminus \mathcal{N}(K))\). As a consequence, we have~$M_{P(K,\eta)} = (M_{P} \setminus \mathcal{N}(\eta))\cup_{ T}(S^{3} \setminus \mathcal{N}(K))$.
  Applying~\cite[Theorem~1.1]{FriedlLeidyNagelPowell}, we know that if $H_*(T)$ and~$H_*(M_{P(K,\eta)})$ are $\LF$-torsion modules, then $H_*(M_{P} \setminus \mathcal{N}(\eta))$ and $H_*(S^{3} \setminus \mathcal{N}(K))$ are also
  $\LF$-torsion modules and the direct sum of the canonical inclusions induces the following morphism of linking forms:
  \begin{equation}\label{eq:morphism-inking forms}
    \psi \colon \Bl_{\gamma_P}(M_{P} \setminus \mathcal{N}(\eta)) \oplus \Bl_{\gamma_K}(S^{3} \setminus \mathcal{N}(K)) \to \Bl_\gamma(M_{P(K,\eta)}).
  \end{equation}
  Note that although~\cite[Theorem 1.1]{FriedlLeidyNagelPowell} is stated for Ore domain coefficients, Proposition~\ref{prop:BlanchfieldOreTwisted} implies that it also holds in the twisted setting.
  The next paragraph is devoted to checking that~$H_*(M_{P(K,\eta)})$ and $H_*(T)$ are indeed $\LF$-torsion modules.
  Since the representation $\gamma$ was assumed to be acyclic, $H_*(M_{P(K,\eta)})$ is indeed torsion. We must only show that $H_*(T)$ is torsion.
  We assert that
  \begin{equation}
    \label{eq:HomologyTorus}
    H_{i}(T) =
    \begin{cases}
      \F[t^{\pm1}]^{d} / (1-\gamma(\eta)) & i = 0,1, \\
      0                     & \text{otherwise}.
    \end{cases}
  \end{equation}
  Indeed, arguing as in~\cite[Proposition 3.11]{PowellThesis}, we see that the twisted chain complex \(C_{\ast}(T)\) is chain homotopy equivalent to
$$ \F[t^{\pm1}]^{d} \xrightarrow{\left(1-\gamma(\eta) \quad\gamma(\mu_{\eta})-1\right)^{T}} \F[t^{\pm1}]^{2d} \xrightarrow{\left(1-\gamma(\mu_{\eta}) \quad 1-\gamma(\eta)\right)} \F[t^{\pm1}]^{d}$$
  with a basis of \(C_{1}(T) \cong \F[t^{\pm1}]^{2d} \) given by the \(\mu_{\eta} \otimes e_i \) and \(\eta \otimes e_i\) for $i=1,\ldots,d$.
  Since $\gamma$ is $\eta$-regular, \(\gamma(\mu_{\eta})=1\) and the assertion follows.
  For later use, note that this argument also proves that the~$\F[t^{\pm 1}]$-module \(H_{1}(T)\) is generated by the homology classes~\([e_i \otimes \mu_{\eta}]\) for $i=1,\ldots,d$.
  Since~$\gamma$ is $\eta$-regular,~\eqref{eq:HomologyTorus} shows that $H_*(T)$ is torsion, concluding our verification of the assumptions of~\cite[Theorem 1.1]{FriedlLeidyNagelPowell} and thus establishing the existence of the morphism $\psi$ in~\eqref{eq:morphism-inking forms}.

  Our goal is now to relate the Blanchfield pairings on $M_P,M_K$ and $M_{P(K,\eta)}$.
  Observe that the $\LF$-modules $H_1(M_{P(K,\eta)}), H_1(M_{P} \setminus \mathcal{N}(\eta),T)$ and $H_1(S^{3} \setminus \mathcal{N}(K),T)$ are torsion: for~$H_1(M_{P(K,\eta)})$, this follows from the $\eta$-regularity of $\gamma$.
  For the two latter modules, this can be seen by looking at the long exact sequence of the pairs $(S^{3} \setminus \mathcal{N}(K),T)$ and $(M_{P} \setminus \mathcal{N}(\eta),T)$ together with the fact that~$H_1(M_{P} \setminus \mathcal{N}(\eta)),H_1(S^{3} \setminus \mathcal{N}(K))$ and $H_*(T)$ are all $\LF$-torsion.

  Combining this observation with $ H_{0}(M_{P} \setminus \mathcal{N}(\eta),T) = 0=H_{0}(M_{P} \setminus \mathcal{N}(\eta),T)$, Poincar\'e duality and the universal coefficient theorem (where the underlying ring is the PID $\LF$),
 we obtain the following isomorphisms:
  \begin{align}
    H_{2}(M_{P(K,\eta)}) &\cong H^{1}(M_{P(K,\eta)}) \cong \operatorname{Ext}^{1}_{\text{left-}\LF}(H_{0}(M_{P(K,\eta)}),\LF)^{\#} \cong H_{0}(M_{P(K,\eta)})^{\#}, \label{eq:H-2-1}  \\
    H_{2}(M_{P} \setminus \mathcal{N}(\eta)) &\cong H^{1}(M_{P}\setminus \mathcal{N}(\eta),T) \cong \operatorname{Ext}^{1}_{\LF}(H_{0}(M_{P} \setminus \mathcal{N}(\eta),T),\LF)  = 0,  \label{eq:H-2-2} \\
    H_{2}(S^{3} \setminus \mathcal{N}(K)) &\cong H^{1}(S^{3}\setminus \mathcal{N}(K),T) \cong \operatorname{Ext}^{1}_{\LF}(H_{0}(S^{3}\setminus \mathcal{N}(K),T),\LF) = 0. \label{eq:H-2-3}
  \end{align}
  In more details, in each of the three lines, in our application of the universal coefficient theorem, we used our observation that the relevant first homology modules are $\LF$-torsion.
  In the third isomorphism of~\eqref{eq:H-2-1}, we also used the fact that for any torsion \(\LF\)-module \(T\), we have~$\operatorname{Ext}^{1}_{\LF}(T,\LF) \cong~T$. 

  Our strategy to relate the various Blanchfield pairings is to apply Theorem~\ref{thm:isoprojection}.
  We now introduce the notation necessary to its application as well as verify one of its key assumptions.
  \begin{lemma}
    \label{claim:InclusionIsom}
    The inclusions $\iota_{P} \colon M_{P} \setminus \mathcal{N}(\eta) \hookrightarrow M_{P}$ and $\iota_{K} \colon S^{3} \setminus \mathcal{N}(K) \hookrightarrow M_{K}$ induce  isomorphisms on twisted homology in degree $j \neq 1,2$ and
    epimorphisms $ (\iota_{P})_{\ast},  (\iota_{K})_{\ast}$ in degree one.
    Moreover, \(\ker (\iota_{P})_{\ast}\) and \(\ker(\iota_{K})_{\ast}\) are generated by the~\([e_i \otimes \mu_{\eta}]\) for $i=1,\ldots, d$.
    Additionally, the modules $H_1(M_K)$ and $H_1(M_P)$ are $\LF$-torsion.
    Finally, we have the following equalities
    \begin{align*}
      \ord H_{1}(M_{P} \setminus \mathcal{N}(\eta)) &\doteq \frac{\ord(H_{1}(M_{P})) \cdot \ord(H_{1}(T))}{\ord(H_{0}(M_{P}))^\#}, \\
      \ord H_{1}(S^{3} \setminus \mathcal{N}(K))    &\doteq \frac{\ord(H_{1}(M_{K})) \cdot \ord(H_{1}(T))}{\ord(H_{0}(M_{K}))^\# }.
    \end{align*}
  \end{lemma}
  \begin{proof}
    We only prove the lemma for \(M_{P} \setminus \mathcal{N}(\eta)\): the proof for \(S^{3} \setminus \mathcal{N}(K)\) is completely analogous.
    Using~\eqref{eq:HomologyTorus} and the Mayer-Vietoris sequence of \(M_{P} = (M_{P} \setminus \mathcal{N}(\eta)) \cup_T \mathcal{N}(\eta)\), we deduce that~$i_P$ does indeed induce an isomorphism on twisted homology in degree $j > 2$.
    In degree zero, notice that the inclusion \(T \hookrightarrow \mathcal{N}(\eta)\) induces an isomorphism on \(H_{0}\) and therefore, with the aid of the Mayer-Vietoris sequence, we obtain an isomorphism \(H_{0}(M_{P} \setminus \mathcal{N}(\eta)) \cong H_{0}(M_{P})\).
    
    To get the result in degrees $j=1,2$, consider the relevant part of the Mayer-Vietoris sequence:
    \begin{equation}\label{eq:MV-sequence}
      0 \xrightarrow{} H_{2}(M_{P}) \xrightarrow{} H_1(T) \xrightarrow{j_{P}} H_1(M_{P} \setminus \mathcal{N}(\eta)) \xrightarrow{(\iota_{P})_{\ast}} H_1(M_P) \to 0.
    \end{equation}
    Notice that the leftmost part of~\eqref{eq:MV-sequence} is exact by~\eqref{eq:H-2-2}.
    Since $H_1(M_P \setminus \mathcal{N}(\eta))$ is $\LF$-torsion,~\eqref{eq:MV-sequence} implies that this is also the case for $H_1(M_P)$.
    Next,~\eqref{eq:MV-sequence} also implies that the map~\((\iota_{P})_{\ast}\) is an epimorphism with kernel generated by the~\([e_i \otimes \mu_{\eta}]\) (or technically by~\( j_P([e_i \otimes \mu_{\eta}])\), because~\(H_{1}(T)\) is generated by these elements.
    The last assertion of the lemma also follows from~\eqref{eq:MV-sequence}: we use the multiplicativity of the orders and the same isomorphisms as in~\eqref{eq:H-2-1} to establish the isomorphism $H_{2}(M_{P}) \cong H_0(M_{P})^{\#}$ (this is possible since we now know that $H_1(M_P)$ is torsion).
    This concludes the proof of the lemma.
  \end{proof}

  We now prove the first assertion of the theorem, namely that $\gamma_P$ and $\gamma_K$ are acyclic. We only check this for $\gamma_P$: the proof for $\gamma_K$ is identical. Since $H_*(M_P \setminus \mathcal{N}(\eta))$ is $\LF$-torsion, Lemma~\ref{claim:InclusionIsom} implies that $H_i(M_P)$ is $\LF$-torsion for $i \neq 2$. For $i=2$, we use the isomorphism $H_2(M_P)\cong H_0(M_P)^\#$ (that we established in the proof of Lemma~\ref{claim:InclusionIsom}) and the conclusion follows.
The first statement of Theorem~\ref{thm:CablingTheorem} is now proved.

  As a next step, consider the decomposition \(M_{P(K,\eta)} = \left(M_{P}\setminus \mathcal{N}(\eta)\right) \cup_T \left(S^{3} \setminus \mathcal{N}(K)\right)\).
  The associated Mayer-Vietoris sequence can be decomposed into the following three exact sequences:
  \begin{align}
    &0 \xrightarrow{} H_{2}(M_{P(K,\eta)}) \xrightarrow{} H_{1}(T) \xrightarrow{} \mathcal{K} \xrightarrow{} 0, \label{eq:ex-seq-1}\\
    &0 \xrightarrow{} \mathcal{K} \xrightarrow{} H_{1}(M_{P}\setminus \mathcal{N}(\eta)) \oplus H_{1}(S^{3}\setminus \mathcal{N}(K)) \xrightarrow{\psi} H_{1}(M_{P(K,\eta)}) \xrightarrow{} \mathcal{C} \xrightarrow{} 0, \label{eq:ex-seq-2}\\
    &0 \xrightarrow{} \mathcal{C} \xrightarrow{} H_{0}(T) \xrightarrow{} H_{0}(M_{P}\setminus \mathcal{N}(\eta)) \oplus H_{0}(S^{3}\setminus \mathcal{N}(K)) \xrightarrow{} H_{0}(M_{P(K,\eta)}) \xrightarrow{} 0 \label{eq:ex-seq-3},
  \end{align}
  where $\mathcal{K}=\ker(\psi)$ and $\mathcal{C}=\coker(\psi)$.
  Therefore, applying the multiplicativity of the orders, the use of~\eqref{eq:ex-seq-1} and~\eqref{eq:ex-seq-3} (as well as~\eqref{eq:H-2-1}) implies that
  \begin{align*}
    \ord (\mathcal{C}) &\doteq \frac{\ord(H_{0}(T)) \cdot \ord(H_{0}(M_{P(K,\eta)}))}{\ord(H_{0}(M_{P}\setminus \mathcal{N}(\eta))) \cdot \ord(H_{0}(S^{3} \setminus \mathcal{N}(K)))} \doteq \frac{\ord(H_{0}(T)) \cdot \ord(H_{0}(M_{P(K,\eta)}))}{\ord(H_{0}(M_{P})) \cdot \ord(H_{0}(M_{K}))}, \\
    \ord (\mathcal{K} )&\doteq \frac{\ord(H_{1}(T))}{\ord(H_{2}(M_{P(K,\eta)}))} \doteq \frac{\ord(H_{1}(T))}{\ord(H_{0}(M_{P(K,\eta)}))^\#}.
  \end{align*}
  The key $\LF$-module required in this proof is defined as
  \[A := \frac{H_{1}(M_{P}\setminus \mathcal{N}(\eta)) \oplus H_{1}(S^{3}\setminus \mathcal{N}(K))}{\mathcal{K}}.\]
  The module $A$ will be used both in the proof of the second and third statements of Theorem~\ref{thm:CablingTheorem}.
As a preliminary however, we first compute $    \ord (A)  \ord (A)^\# $.
  Combining~\eqref{eq:ex-seq-2} with our computation of $\ord (\mathcal{C})$, we obtain
  \[\ord (A) \doteq \frac{\ord(H_{1}(M_{P(K,\eta)}))}{\ord (\mathcal{C})} \doteq \frac{\ord(H_{1}(M_{P(K,\eta)})) \cdot \ord(H_{0}(M_{P})) \cdot \ord H_{0}(M_{K})}{\ord (H_{0}(T)) \cdot \ord(H_{0}(M_{P(K,\eta)}))}.\]
  On the other hand, Lemma~\ref{claim:InclusionIsom} and our computation of $\ord (\mathcal{K})$ imply that
  \begin{align*}
    \ord (A) &\doteq \frac{\ord(H_{1}(M_{P} \setminus \mathcal{N}(\eta))) \cdot \ord(H_{1}(S^{3}\setminus \mathcal{N}(K)))}{\ord (\mathcal{K})}  \\
             &\doteq \frac{\ord(H_{1}(M_{P})) \cdot \ord(H_{1}(M_{K})) \cdot \ord(H_{1}(T))^{2} \cdot \ord(H_{0}(M_{P(K,\eta)}))^\# }{\ord(H_{0}(M_{P}))^\#  \cdot \ord(H_{0}(M_{K}))^\#  \cdot \ord(H_{1}(T))}  \\
             &\doteq \frac{\ord(H_{1}(M_{P})) \cdot \ord(H_{1}(M_{K})) \cdot \ord(H_{1}(T)) \cdot \ord(H_{0}(M_{P(K,\eta)}))^\# }{\ord(H_{0}(M_{P}))^\#  \cdot \ord(H_{0}(M_{K}))^\# }.
  \end{align*}
  Additionally, observe that $\ord(H_1(T))=\ord(H_0(T))^\#$ (this follows by duality and the universal coefficient theorem) and $\ord(H_1(M_{P(K,\eta)})=\ord(H_1(M_{P(K,\eta)})^\#$ (since $H_1(M_{P(K,\eta)})$ supports a non-singular linking form).
  Combining these observations with the above formulas for $\ord (A)$, we obtain
  \begin{equation}
    \label{eq:ord-A}
    \ord (A)  \ord (A)^\# \doteq \ord(H_{1}(M_{P})) \cdot \ord(H_{1}(M_{K})) \cdot \ord(H_{1}(M_{P(K,\eta)})).
  \end{equation}
We now prove the second statement of Theorem~\ref{thm:CablingTheorem} which asserts that $L$ is isotropic and that \(\ord (L)\) divides \(\det(1-\gamma(\eta))\).
  Observe that \((\iota_{P})_{\ast} \oplus (\iota_{K})_{\ast}\) and \(\psi\) vanish on $\mathcal{K}$.
  As a consequence, we let \((\iota_{P})_{\ast} \oplus (\iota_{K})_{\ast} / \mathcal{K}\) and~\(\psi / \mathcal{K}\) denote the maps induced on \(A\).
  Additionally, using \(\widetilde{L}\) to denote the kernel of \((\iota_{P})_{\ast} \oplus (\iota_{K})_{\ast} / \mathcal{K}\) and~\(\operatorname{Cok}\) to denote the cokernel of \(\psi / \mathcal{K}\), we obtain the following diagram of exact sequences:

  \begin{equation}
    \label{eq:ApplyAlgebra}
    \xymatrix@R0.5cm{
      \widetilde{L} \ar@{^{(}->}[d] \\
      A \ar@{^{(}->}[r]^{\psi/\mathcal{K}} \ar@{->>}[d]_{(\iota_{P})_{\ast} \oplus (\iota_{K})_{\ast} / \mathcal{K}} & H_1(M_{P(K,\eta)}) \ar[r] & \operatorname{Cok}. \\
      H_1(M_P) \oplus H_1(M_K)
    }
  \end{equation}
  To check the second assertion of the theorem, we will show that $(\psi/\mathcal{K})(\widetilde{L}) = L$, where $L$ was defined as $\im (H_1(T) \to H_1(M_{P(K,\eta)}))$.
  Use $j_P$ (resp. $j_K$) to denote the map induced by the inclusion $T \to M_P \setminus \mathcal{N}(\eta)$ (resp. $T \to S^3 \setminus \mathcal{N}(K)$).
  The exact sequence in~\eqref{eq:ex-seq-2} implies that~\(\mathcal{K}\), treated as a submodule of \(H_{1}(M_{P}\setminus \mathcal{N}(\eta)) \oplus H_{1}(S^{3}\setminus \mathcal{N}(K))\), is generated by \((j_{P}([e_{i} \otimes \mu_{\eta}]), -j_{K}([e_{i}\otimes\mu_{\eta}]))\), for~$i=1,\ldots, d$.
  Therefore, for any $i$, we have \((j_{P}([e_{i} \otimes \mu_{\eta}]),0) = (0,j_{K}([e_{i} \otimes \mu_{\eta}]))\) in \(A\).
  Moreover,
  \[\widetilde{L} =\ker \left((i_P)_{\ast} \oplus (i_K)_{\ast}\right) = \langle (j_{P}[e_{i}\otimes \mu_{\eta}], j_{K}[e_{i}\otimes \mu_{\eta}]) \mid 1 \leq i \leq d\rangle / \langle j_{P}[e_{i} \otimes \mu_{\eta}] = j_{K}[e_{i} \otimes \mu_{\eta}] \rangle,\]
  and thus \((\psi/\mathcal{K})(\widetilde{L}) = L\). Since \(\psi/\mathcal{K}\) is injective, it follows that \(L\) and \(\widetilde{L}\) are isomorphic.
  In order to prove that \(L\) is isotropic, observe that the homology classes \([e_{i}\otimes \mu_{\eta}]\) belong to the radical of~\(\Bl_{\gamma_{P}}(M_{P} \setminus \mathcal{N}(\eta))\) and \(\Bl_{\gamma_{K}}(S^{3} \setminus \mathcal{N}(K))\), because they come from the boundary of the respective manifolds.
  Since \(\psi\) and \(\psi / \mathcal{K}\) are morphisms of linking forms, it follows that \(L\) is isotropic.
  The constraint on the order of \(L\) comes from the fact that \(L\) is a quotient of \(H_{1}(T)\).
This establishes the second statement of Theorem~\ref{thm:CablingTheorem}.

We now move on to the third statement which describes the relation between $\Bl_\gamma(P(K,\eta))$ and $\Bl_{\gamma_P}(P) \oplus \Bl_{\gamma_K}(K)$.
  First observe that $\Bl_{\gamma_P}(M_P \setminus \mathcal{N}(\eta)) \oplus \Bl_{\gamma_K}(S^3 \setminus \mathcal{N}(K)) $ vanishes on $\mathcal{K}$: indeed~$\mathcal{K}$ only contains homology classes which come from the boundary torus $T$.
  As a consequence, $\Bl_{\gamma_P}(M_P \setminus \mathcal{N}(\eta)) \oplus \Bl_{\gamma_K}(S^3 \setminus \mathcal{N}(K))$ descends to $A$.


  We wish to apply Theorem~\ref{thm:isoprojection} to $(M'',\lambda'')=\Bl_\gamma(P(K,\eta)),(M',\lambda')=\Bl_{\gamma_P}(P) \oplus \Bl_{\gamma_K}(K)$ with~$M=A,\iota=\psi/\mathcal{K}$ and $\pi=(\iota_{P})_{\ast} \oplus (\iota_{K})_{\ast} / \mathcal{K}$.
  First, it is clear that the maps \((\iota_{P})_{\ast} \oplus (\iota_{K})_{\ast} / \mathcal{K}\) and \(\psi / \mathcal{K}\) are morphisms of linking forms.
  Next, by definition of $\mathcal{K}$, the map $\psi / \mathcal{K}$ is injective, while \((\iota_{P})_{\ast} \oplus (\iota_{K})_{\ast} / \mathcal{K}\) is surjective thanks to~\eqref{eq:MV-sequence}.
  Since $M_P, M_K$ and $M_{P(K,\eta)}$ are closed manifolds, the corresponding Blanchfield forms are non-singular.
  Finally, we already showed in~\eqref{eq:ord-A} that $\ord (A) \cdot (\ord (A))^\# \doteq \ord(H_{1}(M_{P})) \cdot \ord(H_{1}(M_{K})) \cdot \ord(H_{1}(M_{P(K,\eta)}))$.
  We can therefore apply Theorem~\ref{thm:isoprojection}  to the above diagram to obtain the third assertion of the theorem, concluding the proof.
\end{proof}

The next result shows that Blanchfield pairings need not be additive under satellite operations.

\begin{corollary}\label{cor:cabling-formula}
  Using the same assumptions as in Theorem~\ref{thm:CablingTheorem}, if \(\ord (L) \) is relatively prime to \(\ord(H_{1}(M_{P};\LF^{d}_{\gamma_{P}})) \cdot \ord(H_{1}(M_{K};\LF^{d}_{\gamma_{K}}))\), then there is an isometry of linking forms
  \[\Bl_\gamma(P(K,\eta)) \cong \Bl_{\gamma_P}(P) \oplus \Bl_{\gamma_K}(K) \oplus (X,\lambda_{X}),\]
  where \((X,\lambda_{X})\) is a metabolic linking form and \(L \subset X\) is a metabolizer for \(\lambda_{X}\).

  In particular, if the map \(H_{1}(T;\LF^{d}_{\gamma}) \to H_{1}(M_{P(K,\eta)};\LF^{d}_{\gamma})\) vanishes, then we obtain the following isometry of linking forms:
  \[\Bl_{\gamma}(P(K,\eta)) \cong \Bl_{\gamma_P}(P) \oplus \Bl_{\gamma_K}(K).\]
\end{corollary}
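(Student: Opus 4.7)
The plan is to combine the sublagrangian reduction statement of Theorem~\ref{thm:CablingTheorem} with an orthogonal primary decomposition of the non-singular linking form $\Bl_\gamma(P(K,\eta))$ that is forced by the coprimality hypothesis.

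Write $H := H_1(M_{P(K,\eta)};\LF^d_\gamma)$ and $\lambda := \Bl_\gamma(P(K,\eta))$. Theorem~\ref{thm:CablingTheorem}(3) gives an isometry $(L^\perp/L, \lambda_L) \cong \Bl_{\gamma_P}(P) \oplus \Bl_{\gamma_K}(K)$, so in particular $\ord(L^\perp/L) \doteq \ord(H_1(M_P;\LF^d_{\gamma_P})) \cdot \ord(H_1(M_K;\LF^d_{\gamma_K}))$, which by hypothesis is coprime to $\ord(L)$, and hence also to $\ord(L)^\#$ (this latter polynomial has the same irreducible factors as $\ord(L)$ up to the involution $\#$, while $\ord(L^\perp/L)$ is $\#$-symmetric up to units, as it is the order of a module supporting a non-singular linking form).

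Next, I would let $S$ denote the set of irreducible factors dividing $\ord(L)\ord(L)^\#$ and let $T$ denote the remaining irreducibles. Since primary components for coprime polynomials are mutually orthogonal under any linking form over a PID (the same short argument as in the proof of Proposition~\ref{prop:Primary}), we obtain an orthogonal decomposition $(H,\lambda) = (H_S,\lambda_S) \oplus (H_T,\lambda_T)$ into non-singular summands. Since $L$ is $\ord(L)$-torsion we have $L \subset H_S$, and orthogonality of the primary decomposition immediately gives $H_T \subset L^\perp$. Non-singularity of $\lambda$ yields $\ord(H) \doteq \ord(L)\ord(L)^\#\ord(L^\perp/L)$; combining this with the coprimality forces $\ord(H_S) \doteq \ord(L)\ord(L)^\#$ and $\ord(H_T) \doteq \ord(L^\perp/L)$. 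Writing $L^\perp = (L^\perp \cap H_S) \oplus H_T$ and using multiplicativity then gives $\ord(L^\perp \cap H_S) \doteq \ord(L)$, so the inclusion $L \subset L^\perp \cap H_S$ is an equality and $L$ is a metabolizer for $(H_S,\lambda_S)$. Composing the projection $L^\perp \twoheadrightarrow L^\perp/L$ with the isometry of Theorem~\ref{thm:CablingTheorem}(3) identifies $(H_T,\lambda_T)$ with $\Bl_{\gamma_P}(P) \oplus \Bl_{\gamma_K}(K)$, and setting $(X,\lambda_X) := (H_S,\lambda_S)$ proves the first part.

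For the ``in particular'' clause, if the map $H_1(T;\LF^d_\gamma) \to H_1(M_{P(K,\eta)};\LF^d_\gamma)$ vanishes, then $L = 0$; hence $S = \emptyset$, so $H_S = 0$, giving the announced isometry. The main technical step is the order bookkeeping combined with the identification of $\ord(L^\perp/L)$ from Theorem~\ref{thm:CablingTheorem}; once the orthogonal primary decomposition is in hand, there is no real obstacle.
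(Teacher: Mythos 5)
Your proof is correct, and it takes a genuinely different route from the paper's.

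The paper proves the corollary by going back inside the proof of Theorem~\ref{thm:CablingTheorem}: it recalls the auxiliary module $A = (H_1(M_P\setminus\mathcal{N}(\eta))\oplus H_1(S^3\setminus\mathcal{N}(K)))/\mathcal{K}$, uses the coprimality hypothesis plus the Chinese remainder theorem to split $A\cong\widetilde{L}\oplus B$ with $B\cong H_1(M_P)\oplus H_1(M_K)$, and then invokes Proposition~\ref{prop:missing_in_action} (a non-singular source splits off orthogonally) applied to the injection $\psi/\mathcal{K}|_B$ to produce the summand $(X,\lambda_X)$ of $\Bl_\gamma(P(K,\eta))$; metabolicity of $(X,\lambda_X)$ is then deduced from the Witt equivalence in Theorem~\ref{thm:CablingTheorem}, and the metabolizer is identified by an order count. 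You instead work entirely downstream in $H=H_1(M_{P(K,\eta)};\LF^d_\gamma)$: you take the $\#$-closed partition of the irreducible factors of $\ord(H)$ determined by $\ord(L)\ord(L)^\#$ versus the rest, exploit the standard fact that coprime primary summands are $\lambda$-orthogonal to split $(H,\lambda)=(H_S,\lambda_S)\oplus(H_T,\lambda_T)$ into non-singular pieces, and then make a direct order-bookkeeping argument to show $L=L^\perp\cap H_S$ (so $L$ is a metabolizer for $\lambda_S$) and $L^\perp = L\oplus H_T$, from which the projection $L^\perp\to L^\perp/L$ restricts to an isometry $H_T\cong \Bl_{\gamma_P}(P)\oplus\Bl_{\gamma_K}(K)$. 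Your version only uses the \emph{statement} of Theorem~\ref{thm:CablingTheorem} (isotropy of $L$, its order constraint, and the identification of the sublagrangian reduction) and avoids both Proposition~\ref{prop:missing_in_action} and the Mayer--Vietoris internals, which makes it more self-contained as a corollary; the paper's version is shorter in context since the relevant objects are already in play. One point worth flagging explicitly in your write-up: the orthogonal primary splitting requires $S$ to be closed under $p\mapsto p^\#$, which holds because $S$ is by construction the support of the $\#$-symmetric polynomial $\ord(L)\ord(L)^\#$ -- you implicitly rely on this when asserting $H_S\perp H_T$, so it's worth a sentence.
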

\begin{proof}
  We first recall some notations from the proof of Theorem~\ref{thm:CablingTheorem}. Recall that we set $\mathcal{K}=~\ker(\psi)$, where $\psi \colon \Bl_{\gamma_P}(M_{P} \setminus \mathcal{N}(\eta)) \oplus \Bl_{\gamma_K}(S^{3} \setminus \mathcal{N}(K)) \to \Bl_\gamma(M_{P(K,\eta)})$ is the morphism of linking forms described in~\eqref{eq:morphism-inking forms}. We once again omit coefficients and recall that $A:=(H_1(M_P \setminus \mathcal{N}(\eta) )\oplus H_1(S^3 \setminus \mathcal{N}(K)))/\mathcal{K}$. Since we saw that $\Bl_{\gamma_P}(P) \oplus \Bl_{\gamma_K}(K)$ descends to $A$; we let $\Bl_A$ denote the resulting pairing. 

  \begin{claim*}
    There is a linking form $(X,\lambda_X)$ such that $ \Bl_\gamma(P(K,\eta)) \cong \Bl_{\gamma_{P}}(P) \oplus \Bl_{\gamma_{K}}(K) \oplus (X,\lambda_X)$.
  \end{claim*}
  \begin{proof}
    Using the multiplicativity of orders on the vertical exact sequence in~\eqref{eq:ApplyAlgebra}, we see that $\ord (A) = \ord(\widetilde{L}) \cdot \ord(H_{1}(M_{P})) \cdot \ord(H_{1}(M_{K})).$
    Using the primary decomposition theorem (since~$\LF$ is a PID), the Chinese remainder theorem and \(\gcd(\ord (\widetilde{L}), \ord(H_{1}(M_{P})) \cdot \ord(H_{1}(M_{K}))) = 1\), we obtain an isomorphism
    $A \cong \widetilde{L} \oplus B,$
    where $B$ is isomorphic to $H_{1}(M_{P}) \oplus H_{1}(M_{K})$.
    Observe that the surjectivity of \((\iota_{P})_{\ast} \oplus (\iota_{K})_{\ast}/\mathcal{K}\) implies that 
    \begin{equation}
      \label{eq:BNonSingular} 
      \Bl_{A}|_{B} \cong \Bl_{\gamma_{P}}(P) \oplus \Bl_{\gamma_{K}}(K).
    \end{equation}
Let \(B' = (\psi/\mathcal{K})(B) \subset H_{1}(M_{P(K,\eta)})\).
      Since \(\psi/\mathcal{K}\) is injective (recall~\eqref{eq:ApplyAlgebra}), \(B \cong B'\).
      By our assumptions, \(\ord(L)\) is relatively prime with \(\ord(B')\).
      Furthermore, since \(\ord(B')\) is symmetric, it follows that \(\gcd(\ord(L)^{\#},\ord(B')) = 1\).
      Consequently,
      \[\gcd(\ord (H_{1}(M_{P(K,\eta)})/B'), \ord (B')) = \gcd(\ord(L) \cdot \ord(L)^{\#}, \ord(B')) = 1,\]
      where the first equality follows from the fact that \(\ord(H_{1}(M_{P(K,\eta)}) / B') = \ord(L) \cdot \ord(L)^{\#}\).
      Hence there exists a submodule \(X \subset H_{1}(M_{P(K,\eta)})\), such that \(H_{1}(M_{P(K,\eta)}) = X \oplus B'\).
      By~\cite[Proposition 2.25]{BCP_Alg}, there is a corresponding splitting of linking forms \(\Bl_{\gamma}(P(K,\eta)) = \Bl_{\gamma}(P(K,\eta))|_{X} \oplus \Bl_{\gamma}(P(K,\eta))|_{B'}\).
       Since, $\psi/\mathcal{K} \colon A \to H_1(M_{P(K,\eta)})$ is an injective map of linking forms, we obtain identifications
       \[\Bl_{\gamma}(P(K,\eta))|_{B'} \cong \Bl_{A}|_{B} \cong \Bl_{\gamma_{P}}(P) \oplus \Bl_{\gamma_{K}}(K),\]
       where the first isomorphism is given by \((\psi / \mathcal{K})|_{B}\) and the last isomorphism follows from equation~\eqref{eq:BNonSingular}.
      Hence, we can take \(\lambda_{X} = \Bl(P(K,\eta))|_{X}\), which is non-singular by~\cite[Proposition 2.25]{BCP_Alg}.
    This concludes the proof of the claim.
  \end{proof}
  Since Theorem~\ref{thm:CablingTheorem} shows that \(\Bl_\gamma(M_{P(K,\eta)})\) is Witt equivalent to $\Bl_{\gamma_P}(P) \oplus~\Bl_{\gamma_K}(K)$, the claim implies that  
  $(X,\lambda_X)$ is metabolic. It remains to show that a metabolizer for $(X,\lambda_X)$ is in fact given by $L:= (\psi/\mathcal{K})(\widetilde{L})$.
  As $(\psi/\mathcal{K}) \colon A \to H_1(M_{P(K,\eta)}) $ is an injective morphism of linking forms, the isomorphisms $A \cong \widetilde{L} \oplus B$ and $H_1(M_{P(K,\eta)})~\cong \psi/\mathcal{K}(B) \oplus~X$ imply that \(L = (\psi/\mathcal{K})(\widetilde{L})\) is an isotropic submodule of $X$.
  Since Theorem~\ref{thm:CablingTheorem} shows that $\Bl_{\gamma_P}(P) \oplus \Bl_{\gamma_K}(K)$ is isometric to the isotropic reduction of~\(\Bl_\gamma(M_{P(K,\eta)})\) with respect to $L$, the multiplicativity of orders implies that~$\ord (X)=\ord(L)\ord(L)^{\#}$. As $\LF$ is a PID, we deduce that~\(L\) is a metabolizer in~\(X\), concluding the proof of the first assertion.

  In order to prove the second assertion, notice that our hypothesis implies that the submodule~$L$ is trivial. Since $L$ is a metabolizer of $X$, we deduce that $X$ is itself trivial. The conclusion now follows from the first assertion.
\end{proof}

The situation simplifies considerably when \(\gamma_{K}\) is abelian, i.e.\ factors through the abelianization of \(\pi_{1}(M_{K})\).
Let \(\F[t_{K}^{\pm1}]\) be the group algebra of the abelianization of \(\pi_{1}(M_{K})\) with \(t_{K}\) denoting the generator corresponding to the meridian identified with \(\eta\).
In this case, \(\LF^{d}_{\gamma_{K}}\) becomes a module over \(\F[t_{K}^{\pm1}]\) via the homomorphism \(\gamma_{K} \colon \F[t_K^{\pm 1}] \to \LF^d_{\gamma_{K}} \), i.e.\ via the action $t_{K} \cdot v = v\gamma_K(\eta) $ with~$v \in \LF^{d}_{\gamma_{K}}$.

In this setting, Theorem~\ref{thm:CablingTheorem} takes the following form (compare with~\cite[Theorem 1.3]{FriedlLeidyNagelPowell}).

\begin{corollary}\label{cor:cabling-formula-abelian}
  Using the same assumptions as in Theorem~\ref{thm:CablingTheorem}, if \(\gamma_{K}\) is an abelian representation, then there exists an isometry of linking forms
  \[\Bl_{\gamma}(P(K,\eta)) \cong \Bl_{\gamma_P}(P) \oplus \Bl_{\gamma_K}(K).\]
  If we denote by \(\F[\eta] \subset \F[\pi(M_{P})]\) the subring generated by \(\eta\)
  and we assume that \(\det(\gamma_{P}(x)) \neq 0\), for any \(x \in \F[\eta] \setminus \{0\}\), then $H_{1}(M_{K};\LF^{d}_{\gamma})$ is isomorphic to $H_{1}(M_{K};\F[t_{K}^{\pm1}]) \otimes_{\F[t_{K}^{\pm1}]} \LF^{d}_{\gamma_{K}},$ and there is an isometry
  $$\Bl_{\gamma_K}(K) \cong \Bl(K) \otimes_{\F[t_{K}^{\pm1}]} \LF^{d}_{\gamma_{K}}.$$
\end{corollary}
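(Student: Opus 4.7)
The plan is to prove the two assertions of Corollary~\ref{cor:cabling-formula-abelian} separately, bootstrapping off Corollary~\ref{cor:cabling-formula} for the first and performing a change-of-rings argument for the second.

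For the first isometry, I would apply the vanishing criterion in the second part of Corollary~\ref{cor:cabling-formula}, reducing the claim to showing that the inclusion-induced map $H_1(T;\LF^d_\gamma) \to H_1(M_{P(K,\eta)};\LF^d_\gamma)$ is zero. This map factors through $H_1(S^3\setminus\mathcal{N}(K);\LF^d_{\gamma_K})$, and the proof of Theorem~\ref{thm:CablingTheorem} shows that $H_1(T;\LF^d_\gamma)$ is generated by the classes $[e_i\otimes \mu_\eta]$ for $i=1,\ldots,d$. Since the satellite identification makes $\mu_\eta = \lambda_K$, it suffices to check that each $[e_i\otimes\lambda_K]$ vanishes in $H_1(S^3\setminus\mathcal{N}(K);\LF^d_{\gamma_K})$. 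For this, a Seifert surface $F$ for $K$ pushed into the exterior has boundary $\lambda_K$; since $\gamma_K$ factors through $H_1(S^3\setminus\mathcal{N}(K))\cong\Z$, the surface $F$ lifts to the infinite cyclic cover of $S^3\setminus\mathcal{N}(K)$, and the resulting $2$-chain $e_i\otimes\widetilde{F}$ in $C_{\ast}(S^3\setminus\mathcal{N}(K);\LF^d_{\gamma_K})$ has boundary representing $[e_i\otimes\lambda_K]$, as desired.

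For the second part, the key step is to translate the hypothesis on $\gamma_P$ into flatness of $\LF^d_{\gamma_K}$ over $\F[t_K^{\pm 1}]$. Because $\mu_K$ is identified with $\lambda_\eta$ in the satellite construction, and because $\lambda_\eta$ is freely homotopic to $\eta$ inside $\mathcal{N}(\eta)\subset M_P$, the elements $\gamma_K(t_K)=\gamma_P(\lambda_\eta)$ and $\gamma_P(\eta)$ are conjugate in $GL_d(\LF)$. Consequently, for any nonzero Laurent polynomial $p(t_K)\in \F[t_K^{\pm 1}]$, one has $\det(p(\gamma_K(t_K)))=\det(\gamma_P(p(\eta)))\neq 0$ by hypothesis. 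Since $\F[t_K^{\pm 1}]$ is a PID, this proves that the $\F[t_K^{\pm 1}]$-module $\LF^d_{\gamma_K}$ is torsion-free, and hence flat.

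Equipped with flatness, I would identify $C_\ast(M_K;\LF^d_{\gamma_K})$ with $C_\ast(M_K;\F[t_K^{\pm 1}])\otimes_{\F[t_K^{\pm 1}]}\LF^d_{\gamma_K}$ and use the fact that tensoring with a flat module commutes with taking homology to deduce the isomorphism
\[H_n(M_K;\LF^d_{\gamma_K})\;\cong\; H_n(M_K;\F[t_K^{\pm 1}])\otimes_{\F[t_K^{\pm 1}]}\LF^d_{\gamma_K}\]
for all $n$. The isometry of linking forms then follows from naturality: Poincar\'e duality, the Bockstein homomorphism, and the evaluation map entering Definition~\ref{def:Blanchfield} are all compatible with tensoring by the flat module $\LF^d_{\gamma_K}$, yielding the required identification $\Bl_{\gamma_K}(K)\cong \Bl(K)\otimes_{\F[t_K^{\pm 1}]}\LF^d_{\gamma_K}$. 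The main obstacle I anticipate is precisely this last compatibility: while Poincar\'e duality and the Bockstein are manifestly natural, the evaluation takes values in $\Hom$ into the quotient $\F(t_K)/\F[t_K^{\pm 1}]$, and one must carefully verify that, after tensoring with $\LF^d_{\gamma_K}$, the canonical comparison map to $\Hom_{\LF}(-,\F(t)/\LF)^{\#}$ is an isomorphism. This ultimately reduces to the flatness established above, but the bookkeeping of module structures is the most delicate point.
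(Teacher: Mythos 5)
Your proposal is correct and follows essentially the same route as the paper: for the first assertion you use a Seifert surface to show that the lifted longitude $[e_i\otimes\lambda_K]$ bounds and then invoke the vanishing criterion of Corollary~\ref{cor:cabling-formula}, and for the second you deduce torsion-freeness (hence flatness over the PID $\F[t_K^{\pm1}]$) of $\LF^d_{\gamma_K}$ from the non-degeneracy hypothesis on $\gamma_P|_{\F[\eta]}$ and apply a change-of-coefficients argument. Your conjugation step relating $\gamma_K(t_K)=\gamma(\lambda_\eta)$ to $\gamma(\eta)$ merely makes explicit an identification the paper builds into its definition of the $\F[t_K^{\pm1}]$-module structure, so the content is identical.
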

\begin{proof}
  Let \(\lambda\) denote the zero-framed longitude of \(K\).
  If \(F \subset S^{3} \setminus \mathcal{N}(K)\) is a Seifert surface for~\(K\), then it follows that the image of \(\pi_{1}(F)\) is contained in the commutator subgroup of \(\pi_{1}(S^{3} \setminus \mathcal{N}(K))\), therefore the inclusion of \(F\) lifts to a map to the infinite cyclic cover of \(S^{3} \setminus \mathcal{N}(K)\).
  This means that for any \(\F[t_{K}^{\pm1}]\)-module \(M\), the homology class of \(\lambda\) is trivial in \(H_{1}(S^{3} \setminus \mathcal{N}(K);M)\) and~\(H_{1}(M_{K};M)\).
  Thus, the hypothesis of Corollary~\ref{cor:cabling-formula} is satisfied   and we therefore obtain the desired isometry of linking forms.

  If we have \(\det(\gamma(x))\neq 0\) for any \(x \in \F[\eta] \setminus \{0\}\), then \(\LF^{d}_{\gamma_{K}}\) is a torsion-free \(\F[t_{K}^{\pm1}]\)-module and hence (since~$\LF$ is a PID) flat~\(\F[t_{K}^{\pm1}]\)-module.
  The last two assertions follow easily from the flatness of~\(\LF^{d}_{\gamma_{K}}\).
\end{proof}

Next, we discuss the consequences of Theorem~\ref{thm:CablingTheorem} on signatures of satellite knots.
\begin{corollary}
  \label{cor:SignatureSatellite}
  Let $P(K,\eta)$ be a satellite knot with pattern $P$, companion $K$ and infection curve~$\eta$.
  Let $\gamma \colon \pi_1(M_{P(K,\eta)}) \to GL_d(\LF)$ be a unitary acyclic $\eta$-regular representation.
  If the assumptions of Theorem~\ref{thm:CablingTheorem} hold, then for any $\omega \in S^1$, the averaged twisted signatures of~$P(K,\eta),P$ and $K$ satisfy
  $$ \sigma_{P(K,\eta),\gamma}^{\operatorname{av}}(\omega)=\sigma_{P,\gamma_P}^{\operatorname{av}}(\omega)+\sigma_{K,\gamma_K}^{\operatorname{av}}(\omega).$$
  In particular, under these assumptions, the averaged twisted signature is additive on connected~sums:
  $$ \sigma_{P \# K,\gamma}^{\operatorname{av}}(\omega)=\sigma_{P,\gamma_P}^{\operatorname{av}}(\omega)+\sigma_{K,\gamma_K}^{\operatorname{av}}(\omega).$$
\end{corollary}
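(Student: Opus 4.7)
The plan is to deduce Corollary~\ref{cor:SignatureSatellite} essentially formally from the Witt-equivalence statement in Theorem~\ref{thm:CablingTheorem}(3) together with the Witt-invariance of $\sigma^{\mathrm{av}}$ established in Proposition~\ref{prop:avsig}. The hypotheses of Theorem~\ref{thm:CablingTheorem} yield the Witt equivalence
\[
\Bl_{\gamma}(P(K,\eta)) \sim \Bl_{\gamma_P}(P) \oplus \Bl_{\gamma_K}(K)
\]
in $W(\F(t),\LF)$, so the whole task is to transport this identity to the signature side.

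First I would verify that the averaged signature function is additive under orthogonal direct sums of non-singular linking forms. This is immediate from Definition~\ref{def:sigjump} and Definition~\ref{def:sig_func}: the Hodge numbers $\mathcal{P}(n,\eps,\xi,\F)$ appearing in the signature jumps $\ds_{(M,\lambda)}$ and in the local correction term $\sigma^{\mathrm{loc}}_{(M,\lambda)}$ are themselves additive under the direct sum decomposition guaranteed by Theorem~\ref{thm:MainLinkingForm}; hence so are $\sigma_{(M,\lambda)}$ and (by Proposition~\ref{prop:avsig}) $\sigma^{\mathrm{av}}_{(M,\lambda)}$. Combining this additivity with Proposition~\ref{prop:avsig} (which asserts that $\sigma^{\mathrm{av}}$ is well defined on $W(\F(t),\LF)$), the Witt equivalence displayed above yields
\[
\sigma^{\mathrm{av}}_{P(K,\eta),\gamma}(\omega) = \sigma^{\mathrm{av}}_{P,\gamma_P}(\omega) + \sigma^{\mathrm{av}}_{K,\gamma_K}(\omega)
\]
for every $\omega \in S^1$, which is the first statement.

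For the connected sum assertion, I would realise $P \# K$ as the satellite knot $P(K,\eta)$ in which the infection curve $\eta$ is chosen to be a meridian of $P$ disjoint from $P$. Under this identification, the induced representations on $\pi_1(M_P)$ and $\pi_1(M_K)$ are precisely $\gamma_P$ and $\gamma_K$, so applying the first part of the corollary gives the additivity formula under $\#$. The one mildly subtle point is to confirm that the acyclicity/$\eta$-regularity hypotheses persist in this special case, but this is straightforward: $\mu_{\eta}$ is (isotopic to) a longitude of $P$, which is null-homologous in $M_P$, and $\gamma(\eta)$ is conjugate to the image of a meridian of $K$, so $\det(1-\gamma(\eta))\neq 0$ is inherited from the acyclicity of $\gamma_K$.

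Once the additivity of Hodge numbers under orthogonal sum is written down, there is no genuine obstacle: the corollary is a direct consequence of the heavy lifting already performed in Theorem~\ref{thm:CablingTheorem} and of the signature-theoretic machinery of Section~\ref{sec:Signatures}. The only care required is bookkeeping to ensure that the signature functions involved are, after averaging, genuinely Witt invariants so that passing from an isometry of linking forms to the equality of signature functions is justified.
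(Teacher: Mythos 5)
Your proof is correct and follows the same route as the paper: apply Theorem~\ref{thm:CablingTheorem}(3) to get the Witt equivalence $\Bl_\gamma(P(K,\eta)) \sim \Bl_{\gamma_P}(P) \oplus \Bl_{\gamma_K}(K)$, invoke the Witt invariance of $\sigma^{\mathrm{av}}$ from Proposition~\ref{prop:avsig} (together with its additivity under orthogonal sum, which you rightly observe is a homomorphism property rather than mere invariance), and specialize to $\eta = \mu_P$ for the connected sum. One small slip in your side-verification of $\eta$-regularity for the connected sum: when $\eta = \mu_P$, the meridian $\mu_\eta$ is glued to the zero-framed longitude of $K$ (not a longitude of $P$), and the relevant fact is that $\lambda_K \in \pi_1(S^3 \setminus K)^{(2)}$, which forces $\gamma(\mu_\eta) = \id$; this does not affect your argument since the corollary assumes the hypotheses of Theorem~\ref{thm:CablingTheorem} anyway.
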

\begin{proof}
  Applying Theorem~\ref{thm:CablingTheorem},
  \(\Bl_\gamma(P(K,\eta))\)  is Witt equivalent to \(\Bl_{\gamma_{P}}(P) \oplus \Bl_{\gamma_K}(K)\).
  The first assertion now follows from the Witt invariance of the averaged signature, recall item~\ref{item:S4} of Proposition~\ref{prop:SignatureFunctionAlgebra}; compare also
 ~\cite[Proposition~5.7]{BCP_Alg}.
  The second assertion is immediate since the connected sum is obtained by taking the infection curve $\eta$ to be the meridian of $P$.
\end{proof}

Finally, in the abelian case, we recover results concerning the behavior of the Blanchfield pairing and Levine-Tristram signature under satellite operations (see~\cite{LivingstonMelvin} and~\cite{Shinohara, LitherlandIterated}).

\begin{corollary}
  \label{cor:SignatureSatellite_abelian}
  Let $P(K,\eta)$ be a satellite knot with pattern $P$, companion $K$, infection curve~$\eta$ and winding number \(w = \lk(\eta,P)\).
  The Blanchfield pairing and Levine-Tristram signature sastify
  \begin{align*}
    \Bl(P(K,\eta))(t)&=\Bl(P)(t) \oplus  \Bl(K)(t^w), \\
    \sigma_{P(K,\eta)}(\omega)&=\sigma_{P}(\omega)+\sigma_{K}(\omega^w).
  \end{align*}
\end{corollary}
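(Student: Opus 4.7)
The plan is to recover this classical statement as the abelian specialization of Corollary~\ref{cor:cabling-formula-abelian}. I would take $d=1$ and choose $\gamma \colon \pi_1(M_{P(K,\eta)}) \to GL_1(\LF) = \LF^\times$ to be the standard abelianization sending a meridian of $P(K,\eta)$ to $t$. This representation is unitary because $t \cdot t^\# = 1$, and it is acyclic because the ordinary Alexander module of any knot surgery is $\LF$-torsion. To verify $\eta$-regularity, observe that in the satellite construction the meridian $\mu_\eta$ is glued to the zero-framed longitude of $K$, which is null-homologous in $S^3 \setminus \mathcal{N}(K)$; hence $\mu_\eta$ is null-homologous in $M_{P(K,\eta)}$, giving $\gamma(\mu_\eta) = 1$. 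Moreover $\gamma(\eta) = t^w$, since a longitude of $\eta$ is identified with $\mu_K \mapsto t^{\lk(\eta,P)} = t^w$ in the abelianization, so $\det(1-\gamma(\eta)) = 1-t^w$ is nonzero whenever $w \neq 0$ (the case $w=0$ is trivial: $\eta$ bounds in $S^3\setminus P$, so $P(K,\eta)=P$ in concordance terms for the Blanchfield form, and $\sigma_K(\omega^0)=0$ by the averaging convention).

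Next I would check the further hypothesis of the second half of Corollary~\ref{cor:cabling-formula-abelian}: the subring $\F[\eta] \subset \F[\pi_1(M_P)]$ maps under $\gamma_P$ to the polynomial subring $\F[t^{\pm w}] \subset \LF$, which is an integral domain, so $\det \gamma_P(x) \neq 0$ for every nonzero $x$. Applying Corollary~\ref{cor:cabling-formula-abelian} then yields the isometry
\[\Bl_\gamma(P(K,\eta)) \,\cong\, \Bl_{\gamma_P}(P) \,\oplus\, \Bl(K) \otimes_{\F[t_K^{\pm 1}]} \LF_{\gamma_K}.\]
Since $\gamma_P$ sends the meridian of $P$ to $t$, it is the standard abelianization of $\pi_1(M_P)$, and Remark~\ref{rem:LevineTristram} identifies $\Bl_{\gamma_P}(P)$ with the classical $\Bl(P)(t)$. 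Analogously, $\gamma_K$ factors through the abelianization $\pi_1(M_K) \to \langle t_K \rangle$ followed by $t_K \mapsto t^w$, so the tensor product $\Bl(K) \otimes_{\F[t_K^{\pm 1}]} \LF_{\gamma_K}$ is precisely the linking form $\Bl(K)(t^w)$ obtained from $\Bl(K)(t)$ by substituting $t \mapsto t^w$. This establishes the first displayed equation.

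For the signature formula, I would apply the signature function to both sides of this isometry and invoke its additivity on orthogonal sums (which follows at once from the additivity of signature jumps in Definition~\ref{def:sigjump}). The first summand contributes $\sigma_P(\omega)$ by Remark~\ref{rem:LevineTristram}. For the second summand, if $A(t)$ is any Hermitian matrix representing $\Bl(K)(t)$ (which exists by Proposition~\ref{prop:diagonalreal}), then $A(t^w)$ represents $\Bl(K)(t^w)$; applying Corollary~\ref{cor:sigissig} (or Corollary~\ref{cor:sigissig2} on the averaged level) shows that the signature of $\Bl(K)(t^w)$ at $\omega$ equals the signature of $\Bl(K)(t)$ at $\omega^w$, which is $\sigma_K(\omega^w)$.

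The main technical obstacle I expect is bookkeeping the change of variable $t \mapsto t^w$ at the level of signature jumps: the order polynomial of $\Bl(K)(t^w)$ vanishes at every $w$-th root of each zero of $\Delta_K$, so the jumps of $\sigma_{\Bl(K)(t^w)}$ are distributed among these $w$ preimages. One must verify that after summing jumps anticlockwise from $1$ to $\omega$, one recovers exactly the sum of jumps of $\sigma_{\Bl(K)}$ from $1$ to $\omega^w$. This is clean on representing matrices via Corollary~\ref{cor:sigissig}, but slightly delicate for the $\omega = 1$ normalization term $\ds_{(M,\lambda)}(1)$; handling this carefully, and separately addressing the degenerate case $w = 0$, are the only real subtleties in the argument.
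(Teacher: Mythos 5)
Your proposal is correct and follows essentially the same route as the paper: take $\gamma$ to be the abelianization, observe that the induced representation on $\pi_1(M_K)$ is the standard abelianization composed with $t_K \mapsto t^w$, and then apply Corollary~\ref{cor:cabling-formula-abelian} together with Remark~\ref{rem:LevineTristram}. The extra bookkeeping you flag around the $\delta\sigma(1)$ normalization and the $w=0$ degeneracy is handled exactly as you suspect — both jump terms vanish because $\Delta_K(1)\neq 0$, so the change of variable $t\mapsto t^w$ commutes cleanly with the signature function — and the paper simply suppresses these verifications in its two-line proof.
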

\begin{proof}
  The representation $\gamma$ is abelianization and the composition $ H_1(M_K;\Z) \to H_1(M_{P(K,\eta)};\Z) \cong~\Z$ is given by multiplication by $w$. 
  The result now follows from Remark~\ref{rem:LevineTristram} and Corollary~\ref{cor:cabling-formula-abelian}.
\end{proof}

\section{Metabelian Representations and Casson-Gordon invariants}
\label{sec:CassonGordon}

The aim of this section is to study signatures associated to metabelian representations.
In Subsection~\ref{sub:Metabelian}, we recall two equivalent constructions of a ``metabelian Blanchfield pairing", in Subsection~\ref{sub:MillerPowell}, we review a result due to Miller and Powell, in Subsection~\ref{sub:CassonGordon}, we recall the definition of the Casson-Gordon $\tau$-invariant and in Subsection~\ref{sec:Proof}, we relate it to our twisted signatures.
Throughout this section, we use $\Sigma_n(K)$ to denote the $n$-fold cover of $S^3$ branched along a knot $K$ and fix a character $\chi \colon H_1(\Sigma_n(K);\Z) \to \Z_m$. We also set \(\xi_{m} = e^{\frac{2 \pi i}{m}}\) and~$M_K$ will always denote the 0-framed surgery along $K$.

\subsection{The metabelian Blanchfield pairing}
\label{sub:Metabelian}

In this subsection, we review two equivalent definitions of a particular twisted Blanchfield pairing associated to $M_K$.
The first makes use of a metabelian representation $\alpha(n,\chi) \colon \pi_1(M_K) \to GL_n(\LC)$, while the second uses the $n$-fold cover $M_n \to M_K$.
References for this subsection include~\cite{HeraldKirkLivingston, FriedlEta, MillerPowell}.
\medbreak

Use $\phi_K \colon \pi_1(M_K) \to  H_1(M_K;\Z)\cong \Z=\langle t_K \rangle$  to denote the abelianization homomorphism.
Since~$\phi_K$ endows $\Z[t_K^{\pm 1}]$ with a right $\pi_1(M_K)$-module structure, it gives rise to the twisted homology $\Z[t_K^{\pm1}]$-module $H_1(M_K;\Z[t_{K}^{\pm1}])$.
This module is known to coincide with the Alexander module of $K$. In what follows, we shall frequently identify $H_1(\Sigma_n(K);\Z)$ with $H_1(M_K;\Z[t_K^{\pm 1}])/(t_K^n-1)$ as in~\cite[Corollary 2.4]{FriedlPowell}. We now consider the semidirect product $\Z \ltimes H_1(\Sigma_n(K);\Z)$, where the group law is given by $ (t_K^i,v)\cdot (t_K^j,w)=(t_K^{i+j},t_K^{-j}v+w)$. Next, consider the representation  
\begin{align}
  \label{eq:Matrix}
  \gamma_K(n,\chi) \colon \Z \ltimes H_1(\Sigma_n(K);\Z) &\to ~\operatorname{GL}_n(\LC)  \nonumber \\
  (t_K^j,v) &\mapsto \begin{pmatrix}
    0& 1 & \cdots &0 \\
    \vdots & \vdots & \ddots & \vdots  \\
    0 & 0 & \cdots & 1 \\
    t & 0 & \cdots & 0
  \end{pmatrix}^j
                \begin{pmatrix}
                  \xi_{m}^{\chi(v)} & 0 & \cdots &0 \\
                  0 & \xi_{m}^{\chi(t_K \cdot v)} & \cdots &0 \\
                  \vdots & \vdots & \ddots & \vdots \\
                  0 & 0 & \cdots & \xi_{m}^{\chi(t_K^{n-1} \cdot v)}
                \end{pmatrix}.
\end{align}
We will now use $\gamma_K(n,\chi)$ to obtain a representation of $\pi_1(M_K)$. To achieve this, we identify the Alexander module $H_1(M_K;\Z[t_{K}^{\pm1}])$ with the derived quotient $\pi_1(M_K)^{(1)}/\pi_1(M_K)^{(2)}$ and consider the following composition of canonical projections
\[q_K \colon \pi_{1}(M_{K})^{(1)}  \to  H_1(M_K;\Z[t_K^{\pm 1}]) \to H_1(\Sigma_n(K);\Z).\]
Fix an element~$\mu_{K}$ in $\pi_1(M_K)$ such that $\phi_K(\mu_{K})=t_K$.
Note that for every $g \in \pi_1(M_K)$, we have~$\phi_K(\mu_K^{-\phi_K(g)}g)=1$.
Since $\phi_K$ is the abelianization map, we deduce that $\mu_K^{-\phi_K(g)}g$ belongs to~$\pi_1(M_K)^{(1)}$.
As a consequence, we obtain the following map:
\begin{align}
  \label{eq:MapToSemiDirect}
  \widetilde{\rho}_K \colon  \pi_1(M_K) &\to \Z \ltimes H_1(\Sigma_n(K);\Z) \\
  g &\mapsto (\phi_K(g),q_K(\mu_{K}^{-\phi_K(g)}g)). \nonumber
\end{align}
Precomposing the representation $\gamma_K(n,\chi)$ with $\widetilde{\rho}_K$ provides the unitary representation 
\begin{equation}
  \label{eq:ActionForMK}
  \alpha_K(n,\chi) \colon \pi_1(M_K) \stackrel{\widetilde{\rho}}{\to}  \Z \ltimes H_1(\Sigma_n(K);\Z) \stackrel{\gamma_K(n,\chi)}{\longrightarrow} \operatorname{GL}_n(\LC).
\end{equation}
If $m$ is a prime power and $\chi \colon H_1(\Sigma_n(K);\Z) \to \Z_m$ is nontrivial, it is known that the unitary representation $\alpha(n,\chi)$ is acyclic~\cite{FriedlPowellInjectivity} (see also~\cite[Lemma 6.6]{MillerPowell} and~\cite[Corollary following Lemma~4]{CassonGordon2}). When the knot $K$ is clear from the context, we drop it from the notation and simply write~$\alpha(n,\chi)$ instead of $\alpha_K(n,\chi)$.

The next lemma illustrates these notations and will be used to ensure that the hypotheses of Theorem~\ref{thm:CablingTheorem} hold.

\begin{lemma}
  \label{lem:H0CG}
  If $\chi \colon H_1(\Sigma_n(K);\Z) \to \Z_m$ is nontrivial, then $H_0(M_K;\LC_{\alpha(n,\chi)}^n)=0$.
\end{lemma}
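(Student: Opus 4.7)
The plan is to show that $H_0(M_K;\LC^n_{\alpha(n,\chi)})$ vanishes by exhibiting, for each standard basis vector $e_i$ of the module $\LC^n$, a relation of the form $v\cdot \alpha(n,\chi)(g)-v$ that forces $e_i=0$ in $H_0$. Recall that $H_0(M_K;\LC^n_{\alpha(n,\chi)})$ is the quotient of $\LC^n$ by the $\LC$-submodule generated by all such expressions, so this is exactly what needs to be achieved.

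The first step is to feed in the meridian $\mu_K$. Since $\widetilde{\rho}_K(\mu_K)=(t_K,0)$, the matrix $\alpha(n,\chi)(\mu_K)$ is exactly the companion matrix $P$ appearing in~\eqref{eq:Matrix} (the diagonal factor is the identity). A direct computation shows $e_i P=e_{i+1}$ for $i<n$ and $e_nP=t\,e_1$, so the relations coming from $\mu_K$ imply that in $H_0$ one has
\[e_1\equiv e_2\equiv\cdots\equiv e_n\equiv t\,e_1.\]
In particular, all $e_i$ become equal in $H_0$, and $t$ acts trivially on this common class.

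Next, I would use the nontriviality of $\chi$ to kill $e_1$. Pick $v\in H_1(L_n(K);\Z)$ with $\chi(v)\not\equiv 0\pmod m$; this exists since $\chi$ is nontrivial. Using the surjectivity of $q_K\colon \pi_1(M_K)^{(1)}\to H_1(L_n(K);\Z)$ (which follows from the identification $H_1(M_K;\Z[t_K^{\pm1}])/(t_K^n-1)\cong H_1(L_n(K);\Z)$ together with the Hurewicz-type surjection $\pi_1(M_K)^{(1)}\twoheadrightarrow H_1(M_K;\Z[t_K^{\pm1}])$), lift $v$ to an element $\eta\in\pi_1(M_K)^{(1)}$. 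Then $\widetilde{\rho}_K(\eta)=(0,v)$, so $\alpha(n,\chi)(\eta)$ is the diagonal matrix with $(1,1)$-entry equal to $\xi_m^{\chi(v)}$. Consequently the relation
\[e_1\cdot\alpha(n,\chi)(\eta)-e_1=\bigl(\xi_m^{\chi(v)}-1\bigr)e_1\]
shows that $(\xi_m^{\chi(v)}-1)e_1=0$ in $H_0$. But $\xi_m^{\chi(v)}-1$ is a nonzero complex number, hence a unit in $\LC$, so $e_1=0$ in $H_0$. Combined with the first step, this forces $e_1=\cdots=e_n=0$, and therefore $H_0(M_K;\LC^n_{\alpha(n,\chi)})=0$.

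There is no real obstacle here: the only point that deserves verification is the surjectivity of $q_K$, which is standard. The argument is essentially an averaging/eigenvalue computation, and the assumption that $\chi$ is nontrivial is used precisely once, at the last step, to guarantee the existence of an eigenvalue different from $1$.
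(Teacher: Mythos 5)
Your proof is correct, and it takes a slightly different route from the paper's. The paper's proof aims to produce a single group element $g\in\ker(\phi_K)$ for which $\alpha(n,\chi)(g)-\id$ is invertible over $\LC$; since $\alpha(n,\chi)(g)$ is then the diagonal matrix $\operatorname{diag}(\xi_m^{\chi(t_K^i q_K(g))})_{i=0}^{n-1}$, invertibility of $\alpha(n,\chi)(g)-\id$ requires \emph{all} the exponents $\chi(t_K^i q_K(g))$ to be nonzero in $\Z_m$, not merely $\chi(q_K(g))\neq 0$; the paper passes over this distinction. Your two-step argument avoids the issue entirely: feeding in the meridian $\mu_K$ (whose image under $\alpha(n,\chi)$ is the companion matrix with trivial diagonal factor) already identifies $e_1\equiv\cdots\equiv e_n$ and makes $t$ act trivially on the common class, so it then suffices to hit a \emph{single} nontrivial diagonal entry $\xi_m^{\chi(v)}\neq 1$ to kill $e_1$ — exactly what the nontriviality of $\chi$ provides. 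In effect your proof uses the same ingredients (diagonal metabelian matrices, invertibility of $\xi_m^{\chi(v)}-1$ in $\LC$) but distributes the work so that only the weakest possible condition is required at each step; it is a cleaner version of the paper's argument.
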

\begin{proof}
  Let \(e_{i} \in \C[t^{\pm1}]^{n}\), for \(i=1,2,\ldots,n\), denote the
  vectors from the standard basis.
  By definition~$H_0(M_K;\LC_{\alpha(n,\chi)}^n)=\LC^n/V$, with 
  \[V=\langle (\alpha(n,\chi)(g)-\id)v \ | \ v \in \LC^n, g \in ~\pi_1(M_K) \rangle.\] 
  Therefore, it is sufficient to prove that for any \(1 \leq i \leq n\),
  there exists \(g_{i} \in \pi_{1}(M_{K})\) and~$w_{i} \in \C[t^{\pm1}]^{n}\) so that
  ~$e_{i} = \left(\alpha(n,\chi)(g_{i}) - \id\right) \cdot w_{i}\).
  Fix~$1 \leq i \leq n\).
  Choose any~$h_{i} \in [\pi_{1}(M_{K}),\pi_{1}(M_{K})]\) so that~$\chi(\widetilde{\rho}_K(h_{i})) \neq 1\); such an $h_i$ exists because $\chi$ is non-trivial and $\widetilde{\rho}_K$ is surjective.
  The definition of $\alpha(n,\chi)$ then gives
  \[\left(\alpha(n,\chi)(\mu_K^{-i+1} \cdot h_{i} \cdot \mu_K^{i-1}) - \id \right) \cdot e_{i} =
    \left(\chi(\widetilde{\rho}_K(h_{i})) - 1\right) e_{i}.\]
  Therefore, for~$g_{i} := \mu_K^{-i+1} \cdot h_{i} \cdot \mu_K^{i-1}\) and~$w_{i} := \frac{1}{\chi(\widetilde{\rho}_K(h_{i}))-1} e_{i}\), we have~$e_{i} = \left(\alpha(n,\chi)(g_{i}) - \id\right) \cdot w_{i}\).
\end{proof}

We now use $\alpha(n,\chi)$ to endow $\C[t^{\pm 1}]^n$ with a right $\Z[\pi_1(M_K)]$-module structure and apply Definition~\ref{def:Blanchfield} to define a twisted Blanchfield pairing on $M_K$.

\begin{definition}
  \label{def:BlanchfieldMetabelian}
  Let $K$ be an oriented knot, $n$ be an integer, $m$ be a prime power and choose a nontrivial character $\chi \colon H_1(\Sigma_n(K);\Z) \to~\Z_m$.
  The \emph{metabelian Blanchfield pairing} is the Blanchfield pairing twisted by the unitary acyclic representation $\alpha(n,\chi)$:
  $$\Bl_{\alpha(n,\chi)}(K) \colon H_1(M_K;\LC^n_{\alpha(n,\chi)}) \times H_1(M_K;\LC^n_{\alpha(n,\chi)}) \to \OC/ \LC.$$
\end{definition}

Next, we provide a description of $\Bl_{\alpha(n,\chi)}(K)$ in terms of the $n$-fold cyclic cover $p \colon M_n \to M_K$.  The first step is to consider the abelian subgroup $n\Z \times H_1(\Sigma_n(K);\Z)$ of $\Z \ltimes H_1(\Sigma_n(K);\Z)$ and the following $1$-dimensional representation: 
\begin{align}
  \label{eq:Reprho(n,chi)}
  \rho_K(n,\chi) \colon n \Z \times H_1(\Sigma_n(K);\Z) &\to \LC \\
  (t_K^{nk},v) &\mapsto t^k \xi_{m}^{\chi(v)}. \nonumber
\end{align}
We now use $\rho_K(n,\chi)$ to obtain a representation of $\pi_1(M_n)$. Consider the composition $\pi_1(M_n) \stackrel{p_*}{\rightarrow} \pi_1(M_K) \to H_1(M_K;\Z) \cong \mathbb{Z}$.
Since the image of this map is isomorphic to $n \mathbb{Z}$, mapping to it produces a surjective map $\alpha \colon \pi_1(M_n) \to n \mathbb{Z}$.
Using the decomposition $H_1(M_n;\Z) \cong H_1(\Sigma_n(K);\Z) \oplus~\Z$ additionally gives rise to a homomorphism $\rho \colon  \pi_1(M_n) \to H_1(\Sigma_n(K);\Z)$. Consider the $\Z[\pi_1(M_n)]$-module structure on~$\C[t^{\pm 1}]$ given by the following composition:
\begin{align}
  \label{eq:ActionForMn}
  \alpha \times \chi \colon \pi_1(M_n) \stackrel{\alpha \times \rho}{\to} n \Z \times H_1(\Sigma_n(K);\Z) \stackrel{\rho_K(n,\chi)}{\to} \LC.
\end{align}

Under the assumption that $m$ is a prime power and that $\chi$ is a nontrivial character, Casson and Gordon show that $H_*(M_n;\C(t))$ vanishes~\cite[Corollary after Lemma 4]{CassonGordon2}.
In other words, under these conditions, the unitary representation $\alpha \times \chi$ is acyclic.

\begin{definition}
  \label{def:BlanchfieldCover}
  Let $n$ be an integer, let $m$ be a prime power and let $\chi \colon H_1(\Sigma_n(K);\Z) \to \Z_m$ be a nontrivial character.
  The \emph{Blanchfield pairing of the cover} $\Bl_{\alpha \times \chi}(M_n)$ is the Blanchfield pairing corresponding to the unitary acyclic representation~$\alpha \times \chi:$
  \begin{equation}
    \label{eq:BlanchfieldCover}
    \Bl_{\alpha \times \chi}(M_n) \colon H_1(M_n;\LC) \times H_1(M_n;\LC) \to \C(t) / \LC.
  \end{equation}
\end{definition}

Up to the end of this subsection, we shall use $\LC_{\alpha \times \chi}$ to denote the $\Z[\pi_1(M_n)]$-module structure on $\LC$ given by~(\ref{eq:ActionForMn}) and by $\LC^n_{\alpha(n,\chi)}$ the $\Z[\pi_1(M_K)]$-module structure on $\LC^n$ given by~(\ref{eq:ActionForMK}).
The following result is due to Herald, Kirk and Livingston~\cite[Theorem 7.1]{HeraldKirkLivingston}.

\begin{proposition}
  \label{prop:HeraldKirkLivingston}
  Let $n$ be a positive integer.
  There is a canonical chain isomorphism of left~$\LC$-modules between $C_*(M_n;\C[t^{\pm 1}]_{\alpha \times \chi})$ and $C_*(M_K;\C[t^{\pm 1}]^n_{\alpha(n,\chi)})$.
  In particular,  one obtains a canonical isomorphism between the following homology left $\LC$-modules:
  \begin{equation}
    \label{eq:IsomorphismHKL}
    H_1(M_n;\C[t^{\pm 1}]_{\alpha \times \chi}) \cong H_1(M_K;\C[t^{\pm 1}]^n_{\alpha(n,\chi)}).
  \end{equation}
\end{proposition}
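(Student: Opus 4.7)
The plan is to recognize Proposition~\ref{prop:HeraldKirkLivingston} as an incarnation of Shapiro's lemma, exploiting the fact that $p_{*}\pi_{1}(M_{n})$ sits as an index-$n$ subgroup of $\pi_{1}(M_{K})$ and that the matrix representation $\alpha(n,\chi)$ is — by design of formula~(\ref{eq:Matrix}) — the module induced from the one-dimensional representation $\alpha\times\chi$ of $\pi_{1}(M_{n})$.

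First, I would set up the subgroup picture explicitly. Since $\pi_{1}(M_{n})=\phi_{K}^{-1}(n\Z)$, the elements $\mu_{K}^{0},\mu_{K}^{1},\ldots,\mu_{K}^{n-1}$ (where $\mu_{K}$ is the meridian chosen in~(\ref{eq:MapToSemiDirect})) form a complete set of right coset representatives of $\pi_{1}(M_{n})$ in $\pi_{1}(M_{K})$. Consequently $\Z[\pi_{1}(M_{K})]$ is a free right $\Z[\pi_{1}(M_{n})]$-module of rank $n$ on these representatives.

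The central step is to establish a canonical isomorphism of right $\Z[\pi_{1}(M_{K})]$-modules
\[
\Psi\colon \LC_{\alpha\times\chi}\otimes_{\Z[\pi_{1}(M_{n})]}\Z[\pi_{1}(M_{K})]\;\xrightarrow{\cong}\;\LC^{n}_{\alpha(n,\chi)},\qquad v\otimes \mu_{K}^{i}\longmapsto v\cdot e_{i},
\]
where $e_{0},\ldots,e_{n-1}$ denotes the standard basis of $\LC^{n}$. To verify that $\Psi$ intertwines the actions, one fixes $g\in\pi_{1}(M_{K})$ with $\phi_{K}(g)=j$, writes $\mu_{K}^{i}g=h_{i}\mu_{K}^{r}$ with $r=(i+j)\bmod n$ and $h_{i}=\mu_{K}^{i}g\mu_{K}^{-r}\in\pi_{1}(M_{n})$, and then checks that $(\alpha\times\chi)(h_{i})$ is exactly the nonzero entry in position $(r,i)$ of the matrix $\gamma_{K}(n,\chi)(t_{K}^{j},q_{K}(\mu_{K}^{-j}g))$. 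This is a direct calculation: the permutation part $P^{j}$ in~(\ref{eq:Matrix}) captures how $\mu_{K}$ cyclically shifts the cosets (with the $t$ in the bottom-left corner coming from $\mu_{K}^{n}\in\pi_{1}(M_{n})$ being sent to $t$ by $\rho_{K}(n,\chi)$), while the diagonal part $D$ captures the fact that a representative of class $v\in H_{1}(L_{n}(K);\Z)$ conjugated by $\mu_{K}^{i}$ represents $t_{K}^{i}\cdot v$, whence the character evaluates to $\xi_{m}^{\chi(t_{K}^{i}\cdot v)}$.

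Once $\Psi$ is in hand, the remainder of the proof is formal. The universal covers $\widetilde{M}_{K}$ and $\widetilde{M}_{n}$ coincide as topological spaces (being both the universal cover), so $C_{*}(\widetilde{M}_{K})=C_{*}(\widetilde{M}_{n})$ as chain complexes, with the former being the restriction of the latter along the inclusion $\Z[\pi_{1}(M_{n})]\hookrightarrow\Z[\pi_{1}(M_{K})]$. Applying $\Psi$ and associativity of the tensor product yields the chain of natural isomorphisms
\begin{align*}
C_{*}(M_{K};\LC^{n}_{\alpha(n,\chi)})
&=\LC^{n}_{\alpha(n,\chi)}\otimes_{\Z[\pi_{1}(M_{K})]}C_{*}(\widetilde{M}_{K}) \\
&\cong\bigl(\LC_{\alpha\times\chi}\otimes_{\Z[\pi_{1}(M_{n})]}\Z[\pi_{1}(M_{K})]\bigr)\otimes_{\Z[\pi_{1}(M_{K})]}C_{*}(\widetilde{M}_{K}) \\
&\cong\LC_{\alpha\times\chi}\otimes_{\Z[\pi_{1}(M_{n})]}C_{*}(\widetilde{M}_{n})
=C_{*}(M_{n};\LC_{\alpha\times\chi}),
\end{align*}
from which the isomorphism~(\ref{eq:IsomorphismHKL}) on $H_{1}$ follows immediately. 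The main obstacle is the bookkeeping in the middle step: matching the rather ad hoc formula~(\ref{eq:Matrix}) with the abstract induced representation requires carefully tracking how the semidirect-product structure on $\Z\ltimes H_{1}(L_{n}(K);\Z)$ interacts with right multiplication by $\mu_{K}$ on cosets, and verifying that no sign or index convention causes a discrepancy with the matrix $P^{j}D$.
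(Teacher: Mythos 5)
Your proposal is correct and matches the paper's intended argument: the paper attributes the result to Herald--Kirk--Livingston and records in Remark~\ref{rem:InductionFunctor} that the key input is the identification $\gamma_K(n,\chi)\cong\operatorname{ind}_H^G\rho_K(n,\chi)$ together with Shapiro's lemma on chain complexes, which is exactly the structure you flesh out (the induced-module description of $\LC^n_{\alpha(n,\chi)}$, followed by tensor-hom associativity and the equality of universal covers). The only cosmetic difference is that you run the argument directly with $\pi_1(M_K)\supset\pi_1(M_n)$ and the meridian cosets $\mu_K^i$ rather than with the quotient groups $G=\Z\ltimes H_1(L_n(K);\Z)\supset H=n\Z\times H_1(L_n(K);\Z)$, which is equivalent since both representations factor through those quotients.
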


The proof of Proposition~\ref{prop:HeraldKirkLivingston} relies on the relation between the representation $\gamma_K(n,\chi)$ of~\eqref{eq:Matrix} and the representation $\rho_K(n,\chi)$ of~\eqref{eq:Reprho(n,chi)}. Since we shall use this relation in Subsection~\ref{sub:SatelliteMetabelian}, we give some further details.

\begin{remark}
  \label{rem:InductionFunctor}
  Let $H$ be a subgroup of a group $G$ and let $\rho$ be a representation of $H$. 
  It is known that there is an~\emph{induced representation} $\operatorname{ind}_H^G\rho$ of $G$, we refer to \cite{HeraldKirkLivingston} for a construction and a discussion of its properties.
  Taking $G=\Z \ltimes H_1(\Sigma_n(K);\Z)$ and~$H=n\Z \times H_1(\Sigma_n(K);\Z)$, the proof of Proposition~\ref{prop:HeraldKirkLivingston} relies heavily on the isomorphism
  $$ \gamma_K(n,\chi)\cong\operatorname{ind}_H^G \rho_K(n,\chi). $$
\end{remark}

As the isomorphism described in~\eqref{eq:IsomorphismHKL} arises from a canonical chain isomorphism (essentially Shapiro's lemma), we obtain the following result which appears to be implicit in~\cite{MillerPowell}.
However, since no proof has appeared in the literature, we will give a detailed argument in appendix and, in particular, Corollary~\ref{cor:BlanchfieldDownstairs} will follow from the more general Proposition~\ref{prop:Shapiros-lemma-bl-forms}.

\begin{corollary}
  \label{cor:BlanchfieldDownstairs}
  The isomorphism described in~\eqref{eq:IsomorphismHKL} gives rise to an isometry between the twisted Blanchfield pairings $\Bl_{\alpha \times \chi}(M_n)$ and $ \Bl_{\alpha(n,\chi)}(K)$.
\end{corollary}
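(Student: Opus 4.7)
The plan is to upgrade the chain-level statement of Proposition~\ref{prop:HeraldKirkLivingston} to a statement compatible with the three ingredients that go into the definition of a twisted Blanchfield pairing: Poincar\'e duality, the Bockstein homomorphism, and the sesquilinear evaluation map. Concretely, I would first recall the construction of Shapiro's isomorphism underlying Proposition~\ref{prop:HeraldKirkLivingston}: if $H = n\Z \times H_1(L_n(K);\Z)$ sits inside $G = \Z \ltimes H_1(L_n(K);\Z)$ as in Remark~\ref{rem:InductionFunctor}, then $\gamma_K(n,\chi) \cong \operatorname{ind}_H^G \rho_K(n,\chi)$, and the $n$-fold cover $p\colon M_n \to M_K$ realizes this induction topologically. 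This gives a canonical chain isomorphism
\[\Phi_* \colon C_*(M_n;\LC_{\alpha \times \chi}) \xrightarrow{\cong} C_*(M_K;\LC^n_{\alpha(n,\chi)}),\]
together with an analogous isomorphism $\Phi^*$ on cochains and also with coefficients in $\F(t)$ or $\F(t)/\LF$ (since these are obtained by tensoring the base ring, and tensoring commutes with induction from a finite-index subgroup).

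Next, I would check naturality of each of the three ingredients of Definition~\ref{def:Blanchfield} with respect to $\Phi_*$. Naturality of the Bockstein is immediate since $\Phi^*$ is induced from a map of short exact sequences of coefficient modules. For Poincar\'e duality, the point is that the fundamental class $[M_n]$ maps, under the transfer/induction, to the fundamental class $[M_K]$, so cap product with $[M_n]$ on $M_n$ corresponds to cap product with $[M_K]$ on $M_K$ through $\Phi_*$ and $\Phi^*$; this is standard for finite covers once one identifies $p_*[M_n] = n\cdot [M_K]$ with the induction isomorphism.

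The heart of the argument, and the step I expect to be the main obstacle, is identifying the evaluation maps. Over the cover, the evaluation pairing uses the $\pi_1(M_n)$-invariant pairing $\LC_{\alpha\times\chi} \times \LC_{\alpha\times\chi} \to \LC$ given by $(v,w) \mapsto v\, w^\#$, whereas downstairs, by Example~\ref{ex:UsualCoeff}, one uses the matrix pairing $\LC^n_{\alpha(n,\chi)} \times \LC^n_{\alpha(n,\chi)} \to \LC$, $(v,w)\mapsto v\,\makeithashT{w}$; here we use that $\alpha(n,\chi)$ is unitary, so $\widecheck{\alpha(n,\chi)} = \alpha(n,\chi)$. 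I would verify by an explicit local calculation at the level of cochains that Shapiro's isomorphism $\Phi^*$ intertwines these two pairings up to the standard ``trace from $H$ to $G$'' identification: a cocycle $\varphi$ on $M_n$ with values in $\LC_{\alpha\times\chi}$ corresponds under $\Phi^*$ to an $n$-tuple of values $(\varphi(\mu_K^{-i}\widetilde{\sigma}))_{i=0}^{n-1}$ on a lift $\widetilde{\sigma}$, and using the block matrix shape of $\gamma_K(n,\chi)$ displayed in~\eqref{eq:Matrix} one sees that $v\,w^\#$ for the scalar coefficients equals the $(1,1)$-entry of $v\,\makeithashT{w}$ for the corresponding row vectors, which is precisely what makes the two evaluation pairings correspond through $\Phi^*$.

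Putting these three compatibilities together, the composition defining $\Bl_{\alpha\times\chi}(M_n)$ on $H_1(M_n;\LC_{\alpha\times\chi})$ is carried by the isomorphism~\eqref{eq:IsomorphismHKL} to the composition defining $\Bl_{\alpha(n,\chi)}(K)$ on $H_1(M_K;\LC^n_{\alpha(n,\chi)})$. This yields the claimed isometry.
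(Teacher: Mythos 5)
Your overall strategy matches the paper's, which offers no proof at all but simply asserts that the isometry follows because the isomorphism of Proposition~\ref{prop:HeraldKirkLivingston} is a canonical chain isomorphism. Spelling out compatibility of the three ingredients in Definition~\ref{def:Blanchfield} is the right way to substantiate that assertion, and your treatment of the Bockstein is correct (Shapiro's isomorphism is $\LC$-linear, so it commutes with $-\otimes_{\LC}\F(t)$ and $-\otimes_{\LC}\F(t)/\LC$ and hence with the connecting map).

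However, both the Poincar\'e duality step and the evaluation step as you phrase them are imprecise in a way that obscures where the $n$-sheet summation actually lives. The fundamental cycle of $M_n$ is \emph{not} a lift of the fundamental cycle of $M_K$; writing $\widetilde{M}$ for the common universal cover, a chain $c\in C_3(\widetilde{M})$ projecting to a fundamental cycle in $C_3(M_K)=\Z\otimes_{\Z[G]}C_3(\widetilde{M})$ gives only one of the $n$ lifts of each $3$-cell of $M_K$ in $C_3(M_n)=\Z\otimes_{\Z[H]}C_3(\widetilde{M})$; the fundamental cycle of $M_n$ is the sum $\sum_{i=0}^{n-1} g^i c$ over coset representatives of $H$ in $G$. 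Correspondingly, your sentence about the evaluation maps does not parse: for row vectors $v,w\in\LC^n$ the expression $v\,\makeithashT{w}=\sum_i v_i w_i^\#$ is already a scalar, not a matrix, so there is no ``$(1,1)$-entry'' to take. The correct bookkeeping is that the sum over the $n$ sheets appears in the cap product on $M_n$ (via the fundamental cycle $\sum_i g^i c$) whereas on $M_K$ it appears in the evaluation pairing on $\LC^n$ (via the sum $\sum_i v_i w_i^\#$); the check you owe the reader is that these two occurrences of the trace over $G/H$ agree when the five maps (two PD's, two evaluations, Shapiro) are composed. Your plan is sound and the result is true, but these two steps need to be carried out with that summation tracked explicitly rather than waved away as ``standard for finite covers''.
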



%

Summarizing the content of this section, we can choose to work alternatively with $\Bl_{\alpha(n,\chi)}(K)$ or with $\Bl_{\alpha \times \chi}(M_n)$.
The former is used in the paper of Miller-Powell~\cite{MillerPowell} and will be used in Subsections~\ref{sub:MillerPowell},~\ref{sub:SatelliteMetabelian} and in \cite{BCP_Compu}.
The latter is closer to the definition of the Casson-Gordon $\tau$ invariant with which we shall work in Subsections~\ref{sub:CassonGordon} and~\ref{sec:Proof}.

\subsection{A sliceness obstruction due to Miller and Powell}
\label{sub:MillerPowell}

The aim of this subsection is to state an obstruction to sliceness which is due to Miller and Powell~\cite{MillerPowell}.
\medbreak

Given an integer $n$,  we use $\lambda_n$ to denote the non-singular $\Q/\Z$-valued linking form on the finite abelian group $H_1(\Sigma_n(K);\Z)$.
Note that $H_1(\Sigma_n(K);\Z)$ is also a $\Z[\Z_n]$-module.
A \emph{metabolizer} of~$\lambda_n$ is a $\Z[\Z_n]$-submodule $P$ of $H_1(\Sigma_n(K);\Z)$ such that $P=P^\perp$, where the orthogonal complement is defined as~$P^\perp=\{y\in H_1(\Sigma_n(K);\Z)\ | \ \forall x\in P,\ \lambda_n(x,y)=~0\}$.
Finally, given a prime $q$, integers $b \geq a$ and a character $\chi \colon H_1(\Sigma_n(K);\Z) \to~\Z_{q^a}$, we use $\chi_b$ to denote the composition of $\chi$ with the natural inclusion $\Z_{q^a} \to \Z_{q^b}$.

The following obstruction to sliceness is due to Miller and Powell~\cite[Theorem 6.9]{MillerPowell}.

\begin{theorem}
  \label{thm:MillerPowell}
  Let $K$ be a slice knot.
  Then, for any prime power $n$, there exists a metabolizer~$P$ of~$\lambda_n$ such that for any prime power $q^a$, and any nontrivial character $\chi \colon  H_1(\Sigma_n(K);\Z) \to \Z_{q^a}$ vanishing on $P$, we have some $b \geq a$ such that the metabelian Blanchfield pairing $Bl_{\alpha(n,\chi_b)}(K)$ is metabolic.
\end{theorem}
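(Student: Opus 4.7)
The strategy is to combine the abstract obstruction theorem of Proposition~\ref{prop:SignatureSlice} with the classical Casson--Gordon style analysis of branched covers of slice knots. Let $D \subset D^4$ be a slice disk for $K$, set $V_K = D^4 \setminus \mathcal{N}(D)$, and form $W_K = V_K \cup (S^1 \times D^2)$ by filling in a tubular neighborhood of the meridian of $D$, so that $\partial W_K = M_K$. The abelianization $\pi_1(W_K) \to \Z$ agrees with $\phi_K$ on the boundary, so we may form the $n$-fold cyclic cover $\widetilde{W}_n \to W_K$; its boundary is the $n$-fold cyclic cover $M_n \to M_K$, and an analogous cover $\widetilde{V}_n \to V_K$ has $\partial \widetilde{V}_n = L_n(K)$.

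The first step is to identify the metabolizer. By a standard argument going back to Casson--Gordon (see e.g.\ Friedl--Powell), half-lives-half-dies applied to the branched cover $\widetilde{V}_n$ produces a submodule
\[ P := \ker\bigl(H_1(L_n(K);\Z) \to H_1(\widetilde{V}_n;\Z)\bigr), \]
which is a metabolizer for the linking form $\lambda_n$ and is invariant under the deck transformation, i.e.\ a $\Z[\Z_n]$-submodule. This is the $P$ advertised in the statement. Now let $\chi \colon H_1(L_n(K);\Z) \to \Z_{q^a}$ be a nontrivial character vanishing on $P$. Then $\chi$ factors through $H_1(L_n(K);\Z)/P$, and $P$ being the kernel of $i_*$ means this quotient embeds into $H_1(\widetilde{V}_n;\Z)$, so $\chi$ extends to a homomorphism $\widetilde{\chi} \colon H_1(\widetilde{V}_n;\Z) \to \Q/\Z$ with image of $q$-power order. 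The free part of $H_1(\widetilde{V}_n;\Z)$ may force this extension to land in some $\Z_{q^b}$ with $b \geq a$, rather than in $\Z_{q^a}$ itself; this is precisely where the integer $b$ in the statement arises.

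With $\widetilde{\chi}$ in hand, one assembles a metabelian representation of $\pi_1(V_K)$ exactly as in~\eqref{eq:ActionForMK}, using the composition $\pi_1(V_K) \to \pi_1(V_K)/\pi_1(V_K)^{(2)}$ in place of $\pi_1(M_K)/\pi_1(M_K)^{(2)}$ and replacing the role of $H_1(L_n(K);\Z)$ by $H_1(\widetilde{V}_n;\Z)$. Because the added $1$-handle in passing from $V_K$ to $W_K$ kills the meridian of $D$, this representation extends to a unitary representation $\gamma \colon \pi_1(W_K) \to GL_n(\LC)$ whose restriction to $\pi_1(M_K)$ is (canonically conjugate to) $\alpha(n,\chi_b)$. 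Combining Corollary~\ref{cor:BlanchfieldDownstairs} (which lets us analyze the relevant homology on $\widetilde{W}_n$ with $1$-dimensional coefficients), the fact that $\widetilde{W}_n$ has the $\Q$-homology of a $1$-complex rel boundary, and the standard Casson--Gordon acyclicity argument at prime powers (the same argument that shows $\alpha(n,\chi)$ is acyclic on $M_K$), one verifies that $\gamma$ is acyclic on $W_K$ for some $b \geq a$.

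Granted the existence of this acyclic extension, the theorem follows immediately from Proposition~\ref{prop:SignatureSlice}: indeed $W_K$ is a compact $4$-manifold with $\partial W_K = M_K$, the representation $\alpha(n,\chi_b)$ on $\pi_1(M_K)$ extends to the unitary acyclic representation $\gamma$ on $\pi_1(W_K)$, and thus $\operatorname{Bl}_{\alpha(n,\chi_b)}(K)$ is metabolic. The main obstacle in this plan is the acyclicity verification in the penultimate step: one must rule out, by a careful $\Z_q$-coefficient computation on $\widetilde{W}_n$, that $H_*(W_K; \LC^n_\gamma)$ has $\F(t)$-rank, and it is exactly for this that one may be forced to enlarge $a$ to some $b \geq a$ so that the character avoids the finitely many ``bad'' roots of unity on which twisted $\Z_q$-homology of the slice disk exterior fails to vanish.
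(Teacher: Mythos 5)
The paper does not prove this theorem; it is stated as a citation of Miller--Powell \cite[Theorem 6.9]{MillerPowell} and relied upon as an external result. Your proposal is therefore a reconstruction, and it is worth comparing it with what Miller--Powell actually do.

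The overall shape of your plan (produce a metabolizer from the branched cover of the slice disk, extend the character at the cost of enlarging $q^a$ to $q^b$, extend the metabelian representation over the slice disk exterior, and then invoke a bordism-invariance statement) is the right skeleton, and the role you assign to the integer $b$ is correct. However, there is a genuine gap at the point where you claim to verify that the extended representation $\gamma$ is \emph{acyclic} on the slice disk exterior and then appeal to Proposition~\ref{prop:SignatureSlice}. This is not something you can hope to establish: for the slice disk exterior $V_K = D^4 \setminus \mathcal{N}(D)$, the module $H_2(V_K;\C(t)^n_\gamma)$ is in general \emph{nonzero} --- it is precisely the module that carries the Casson--Gordon intersection form, and the content of Casson--Gordon's and Miller--Powell's theorems is that the associated Witt class (respectively, the image of the boundary $H_1$) is trivial, not that the homology vanishes. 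Enlarging $a$ to $b$ controls the character's order and the acyclicity of $\alpha(n,\chi_b)$ on the \emph{boundary} $3$-manifold $M_K$ (via Casson--Gordon's prime-power argument), but it does nothing to kill $H_2$ of the $4$-manifold. What the actual argument requires is the weaker exact-sequence hypothesis of Lemma~\ref{lem:metabolizer}, namely that
\[
TH_{2}(W, \partial W; \Lambda^n_{\gamma}) \xrightarrow{\partial} H_{1}(\partial W;\Lambda^n_{\gamma}) \xrightarrow{\iota_{\ast}} H_{1}(W; \Lambda^n_{\gamma})
\]
is exact, and verifying that condition when $\gamma$ is not acyclic requires a separate order/rank computation on the $4$-manifold and its boundary --- this is where the real work of Miller--Powell's proof lives, and your plan does not address it. (In the paper, Proposition~\ref{prop:MetabolicAcyclic} derives this exactness \emph{from} acyclicity of $\gamma$; when acyclicity fails, one cannot take that shortcut.)

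A more minor confusion: you construct a manifold $W_K = V_K \cup (S^1 \times D^2)$ ``so that $\partial W_K = M_K$,'' but $\partial(D^4 \setminus \mathcal{N}(D))$ is already the zero-framed surgery $M_K$ (the slice disk supplies the zero framing), so there is nothing to fill in; if you glued in a solid torus you would change the boundary. Proposition~\ref{prop:SignatureSlice} already takes $D^4 \setminus \mathcal{N}(D)$ as the bounding $4$-manifold, so this step of your construction should simply be deleted.
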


Miller and Powell actually prove Theorem~\ref{thm:MillerPowell} under the weaker assumption that $K$ is 2-solvable.
We refer to Cochran-Orr-Teichner's landmark paper~\cite{CochranOrrTeichner} for the definition of $n$-solvability but do not delve deeper into this issue.

Combining Theorem~\ref{thm:MillerPowell} with the earlier sections, we obtain the following result.

\begin{theorem}
  \label{thm:MillerPowellSignature}
  Let $K$ be a slice knot.
  Then, for any prime power $n$, there exists a metabolizer~$P$ of~$\lambda_n$ such that for any prime power $q^a$, and any nontrivial character $\chi \colon  H_1(\Sigma_n(K);\Z) \to \Z_{q^a}$ vanishing on $P$, we have some $b \geq a$ such that the signature function $\sigma^{av}_{K,\alpha(n,\chi_b)}$ is zero.
\end{theorem}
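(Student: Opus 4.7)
The plan is to derive Theorem~\ref{thm:MillerPowellSignature} as an essentially direct consequence of Theorem~\ref{thm:MillerPowell} combined with the Witt-theoretic machinery developed in Section~\ref{sec:Signatures}. Since $\sigma^{av}_{K,\alpha(n,\chi_b)}$ is by Definition~\ref{def:TwistedKnotSignature} nothing more than the averaged signature function of the twisted Blanchfield form $\operatorname{Bl}_{\alpha(n,\chi_b)}(K)$, it suffices to produce, under the hypotheses, a metabolizer for this linking form; the vanishing of the averaged signature then follows formally.

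First I would invoke Theorem~\ref{thm:MillerPowell}: since $K$ is slice, for every prime power $n$ there is a metabolizer $P$ of $\lambda_n$ with the stated property. Fix such a $P$, then fix any prime power $q^a$ and any nontrivial character $\chi \colon H_1(L_n(K);\Z) \to \Z_{q^a}$ vanishing on $P$. Theorem~\ref{thm:MillerPowell} furnishes an integer $b \geq a$ such that $\operatorname{Bl}_{\alpha(n,\chi_b)}(K)$ is metabolic. Note that $\chi_b$ is still nontrivial and still vanishes on $P$ (since it factors the original $\chi$ through the inclusion $\Z_{q^a} \hookrightarrow \Z_{q^b}$), so the representation $\alpha(n,\chi_b) \colon \pi_1(M_K)\to GL_n(\LC)$ is indeed of the type appearing in Definition~\ref{def:BlanchfieldMetabelian}, in particular it is unitary and acyclic (recall the comment after~\eqref{eq:ActionForMK}, so $H_1$-nullity holds), so $\sigma^{av}_{K,\alpha(n,\chi_b)}$ is well-defined.

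Next I would apply the equivalence from Corollary~\ref{cor:metabolic_signature}: a non-singular linking form over $\LC$ is metabolic if and only if its averaged signature function vanishes identically. Applied to the non-singular $\LC$-linking form $\operatorname{Bl}_{\alpha(n,\chi_b)}(K)$ (which is non-singular because $M_K$ is closed and $\LC$ is a PID), metabolicity from Theorem~\ref{thm:MillerPowell} immediately yields
\[
\sigma^{av}_{K,\alpha(n,\chi_b)} \equiv 0 \quad \text{on } S^1,
\]
which is the desired conclusion.

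There is no serious obstacle here; the content of the statement has already been absorbed into Theorem~\ref{thm:MillerPowell}, and the role of Section~\ref{sec:Signatures} is merely to translate ``metabolic'' into ``vanishing averaged signature.'' The only minor point worth double-checking is that the hypotheses of Corollary~\ref{cor:metabolic_signature} apply to $\operatorname{Bl}_{\alpha(n,\chi_b)}(K)$, i.e.\ that one has a bona fide non-singular linking form over $\LC$; this is guaranteed because $\chi_b$ is nontrivial so that $\alpha(n,\chi_b)$ is $H_1$-null by the acyclicity result of Friedl-Powell cited after~\eqref{eq:ActionForMK}, and because $M_K$ is a closed 3-manifold (cf.\ the discussion following Definition~\ref{def:Blanchfield}).
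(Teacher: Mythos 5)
Your proof is correct and takes the same route as the paper: invoke Theorem~\ref{thm:MillerPowell} to conclude that $\operatorname{Bl}_{\alpha(n,\chi_b)}(K)$ is metabolic, then apply Corollary~\ref{cor:metabolic_signature} to conclude that the averaged signature vanishes. Your additional verification that the hypotheses of Corollary~\ref{cor:metabolic_signature} are satisfied (non-singularity via closedness of $M_K$ and $\LC$ being a PID, $H_1$-nullity via acyclicity for nontrivial prime-power characters) is fine but was left implicit in the paper's one-line proof.
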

\begin{proof}
  Theorem~\ref{thm:MillerPowell} implies that $\Bl_{\alpha(k,\chi_b)}(K)$ is metabolic.
  Consequently, item~\ref{item:S4} of Proposition~\ref{prop:SignatureFunctionAlgebra} implies that the averaged signature function of $\Bl_{\alpha(n,\chi_b)}(K)$ vanishes.
\end{proof}

The obstruction of Theorem~\ref{thm:MillerPowell} will be used in \cite{BCP_Compu} to prove that a certain algebraic knot is not slice, thus recovering a result of Hedden-Kirk-Livingston~\cite{HeddenKirkLivingston}.

\subsection{The Casson-Gordon $\tau$-invariant}
\label{sub:CassonGordon}

Given an oriented knot $K$ and a prime power order character~$\chi \colon H_1(\Sigma_n(K);\Z) \to \Z_m$, we recall Casson-Gordon's construction of the Witt class~$\tau(K,\chi)$~\cite{CassonGordon2}.
\medbreak

Using the isomorphism $ H_1(M_n;\Z) \cong H_1(\Sigma_n(K);\Z) \oplus \Z$, the character $\chi$ induces a character on $H_1(M_n;\Z)$ for which we use the same notation. Recalling that the projection $p \colon M_n \to M_K$ gives rise to a surjection $\alpha \colon \pi_1(M_n) \to n\Z \cong \Z$, we obtain a homomorphism
\begin{equation}
  \label{eq:CGHomom}
  \alpha \times \chi \colon \pi_1(M_n) \to \mathbb{Z} \times \mathbb{Z}_m.
\end{equation}
Since the bordism group $\Omega_3(\mathbb{Z} \times \mathbb{Z}_m)$ is finite (this can be seen using the Atiyah-Hirzbruch spectral sequence; see e.g.~\cite[Chapter 1, Section 7]{ConnerFloyd}), there is an integer $r$ such that $r$ copies of $(M_n,\alpha \times \chi)$ bound~$(V_n,\psi)$ for some $4$-manifold $V_n$ and for some homomorphism $\psi \colon \pi_1(V_n) \to \Z \times \Z_m$.
Consider the map $\C[\Z \times \Z_m] \to \C[t^{\pm 1}]$ which sends the generator of $\Z_m$ to $\xi_{m} = e^{2\pi i/m}$ and the generator of~$\Z$ to $t$.
Endow $\mathbb{C}(t)$ with the right $\mathbb{Z}[\pi_1(V_n)]$-module structure which arises from the composition 
\begin{equation}
  \label{eq:ModuleStructure}
  \mathbb{Z}[\pi_1(V_n)] \xrightarrow{\psi} \mathbb{C}[\mathbb{Z} \times \mathbb{Z}_m] \to \LC \to \mathbb{C}(t).
\end{equation}
We will denote by \(\mathbb{C}(t)_{\alpha \times \chi}\)
the module \(\mathbb{C}(t)\) equipped with the action of the group ring $\Z[\pi_1(V_n)]$
as described above.
Observe that~(\ref{eq:ModuleStructure}) also provides a right $\Z[\pi_1(V_n)]$-module structure on $\LC$ (we write $\LC_{\alpha \times \chi}$) which restricts to the one described in~(\ref{eq:ActionForMn}) on each boundary component of~$V_n$.
We will denote the related \(\mathbb{C}[\Z\times\Z_{m}]\)-module by~\(\LC_{\Z \times \Z_m}\).
Next, consider the $\C(t)$-valued intersection form~$\lambda_{\mathbb{C}(t)_{\alpha \times \chi},V_n}$ on $H_2(V_n;\mathbb{C}(t)_{\alpha \times \chi})$.
Casson and Gordon show that if $\chi$ is a character of prime power order, then this Hermitian form is non-singular~\cite[Corollary following Lemma 4]{CassonGordon2}.
We record this result for later reference.

\begin{lemma}
  \label{lem:CassonGordon}
  Let $n$ be an integer and let $m$ be a prime power.
  If $\chi$ is a nontrivial character, then~$H_*(M_n;\C(t)_{\alpha \times \chi})=0$ and the intersection pairing~$\lambda_{\C(t)_{\alpha \times \chi},V_n}$ is non-singular.
\end{lemma}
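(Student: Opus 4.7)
My plan has two largely independent parts. The first assertion---vanishing of $H_*(M_n;\C(t)_{\alpha\times\chi})$---is the content of the Corollary following Lemma~4 in~\cite{CassonGordon2}, so I would cite it and only sketch the argument. One first checks $H_0=0$ directly: because the character $\chi$ is nontrivial, we can choose $h\in H_1(L_n(K);\Z)$ with $\chi(h)\neq 0$, and any lift $g\in\pi_1(M_n)$ of $(h,0)\in H_1(L_n(K);\Z)\oplus\Z\cong H_1(M_n;\Z)$ then satisfies $\alpha(g)=0$ while $g$ acts on $\C(t)$ as multiplication by $\xi_m^{\chi(h)}\neq 1$; hence $\xi_m^{\chi(h)}-1$ is a unit in $\C(t)$ and $H_0$ vanishes. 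Poincar\'e duality on the closed $3$-manifold $M_n$ over the field $\C(t)$, combined with UCT (which has no Ext contribution over a field), then gives $H_3\cong H_0=0$. Since the Euler characteristic of $M_n$ is zero, it follows that $\dim_{\C(t)}H_1=\dim_{\C(t)}H_2$; the genuinely hard step is proving that either of these dimensions vanishes, and it is there that the prime-power hypothesis on $m$ is used in Casson and Gordon's argument.

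Granting the first assertion, I would derive non-singularity of $\lambda_{\C(t)_{\alpha\times\chi},V_n}$ formally. Consider the long exact sequence of the pair $(V_n,\partial V_n)$ with coefficients in $\C(t)_{\alpha\times\chi}$. Since $\partial V_n$ consists of $r$ copies of $M_n$, the first part forces all of its twisted homology to vanish, so the inclusion induces an isomorphism
\[ H_2(V_n;\C(t)_{\alpha\times\chi})\xrightarrow{\cong} H_2(V_n,\partial V_n;\C(t)_{\alpha\times\chi}). \]
Composing with Poincar\'e--Lefschetz duality $H_2(V_n,\partial V_n;\C(t)_{\alpha\times\chi})\cong H^2(V_n;\C(t)_{\alpha\times\chi})$ and with the evaluation map---an isomorphism because we work over the field $\C(t)$---recovers precisely the adjoint of the intersection form. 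As a composition of isomorphisms it is itself an isomorphism, so $\lambda_{\C(t)_{\alpha\times\chi},V_n}$ is non-singular.

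The only real obstacle is the vanishing $H_1(M_n;\C(t)_{\alpha\times\chi})=0$, which genuinely requires the prime-power hypothesis on $m$ and is the substance of Casson and Gordon's Lemma~4. I would quote their result rather than reprove it; the rest of the argument is routine homological algebra with unitary coefficients over a field.
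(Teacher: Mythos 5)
Your proposal is correct and matches what the paper does: the paper records Lemma~\ref{lem:CassonGordon} by simply citing the Corollary following Lemma~4 in Casson--Gordon, which is the essential (and genuinely nontrivial, prime-power-dependent) input, just as you do. Your additional derivation of non-singularity from the vanishing of $H_*(\partial V_n;\C(t)_{\alpha\times\chi})$ via the long exact sequence of the pair, Poincar\'e--Lefschetz duality, and evaluation over the field $\C(t)$ is a correct and standard elaboration of what the paper leaves implicit.
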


Since $\lambda_{\C(t)_{\alpha \times \chi},V_n}$ is non-singular, it gives rise to a Witt class $[\lambda_{\C(t)_{\alpha \times \chi},V_n}]$ in $W(\C(t))$.
On the other hand, the standard intersection pairing $\lambda_{\Q,V_n}$ may very well be singular.
Consequently, we use~$\lambda_{\Q,V_n}^{\text{nonsing}}$ to denote the form obtained by moding out the radical.
The Witt class of $\lambda_{\Q,V_n}^{\text{nonsing}}$ provides an element in~$W(\mathbb{Q})$.
Since the inclusion map $\Q \to \C(t)$ induces a group homomorphism $i \colon W(\mathbb{Q}) \to W(\C(t))$, we therefore obtain an element $i([\lambda_{\Q,V_n}^{\text{nonsing}}])$ in $W(\C(t))$.

\begin{definition}
  \label{def:CassonGordonInvariant}
  Let $K$ be an oriented knot, let $n$ be an integer, let $m$ be a prime power and let $\chi \colon H_1(\Sigma_n(K);\Z) \to \Z_m$ be a non-trivial character.
  The \emph{Casson-Gordon $\tau$-invariant} is the Witt~class
  $$\tau(K,\chi):=([\lambda_{\C(t)_{\alpha \times \chi},V_n}]-i([\lambda_{\Q,V_n}^{\text{nonsing}}]) \otimes \frac{1}{r} \in  W(\C(t))\otimes \mathbb{Q}.$$
\end{definition}

Casson and Gordon show that $\tau(K,\chi)$ is independent both of the choice of the 4-manifold $V_n$ and of the extension $\psi$ of $\alpha \times \chi$ to $\pi_1(V_n)$~\cite{CassonGordon2}.
Furthermore, as we shall recall later on, $\tau(K,\chi)$ provides an obstruction to sliceness.

\begin{remark}
  \label{rem:Cyclotomic}
  Casson and Gordon define their Witt class as an element of $W(\Q(\xi_m)(t))$~\cite{CassonGordon2}.
  Since the natural map $\Q(\xi_m)(t) \to \C(t)$ induces a map on Witt groups, we also obtain a Witt class in~$W(\C(t))$.
  Our reason for working with~$W(\C(t))$ instead of $W(\Q(\xi_m)(t))$ is the following: the former Witt group is much simpler than the latter and, in particular, it is more amenable to the machinery we developed in~\cite{BCP_Alg}.
\end{remark}

\subsection{Casson-Gordon signatures and averaged Blanchfield signatures}
\label{sec:Proof}
In this section, we investigate the relation between the Casson-Gordon invariant $\tau(K,\chi)$ and the twisted Blanchfield pairing $\Bl_{\alpha(n,\chi)}(K)$.
In order to make both these invariants more tractable, we use averaged signatures.
\medbreak

Let $A(t)$ be a matrix over $\C(t)$.
The function $\sign_\omega(A(t)):=\sign(A(\omega))$ is a step-function with finitely many discontinuities, and at each discontinuity $\omega$, we can take the average of the one-sided limits in order to obtain a rational number $\sign_\omega^{\text{av}}(A(t))$.
As explained in~\cite[discussion preceding Theorem 3]{CassonGordon2}, 
one now obtains a well-defined homomorphism $\sign^{\text{av}}_\omega \colon W(\C(t)) \to \Q$ by setting
$$\sign_\omega^{\text{av}}([A(t)]):=\text{sign}^{\text{av}}_\omega(A(t)).$$
This map also gives rise to a well-defined map on $W(\C(t)) \otimes \Q$.
We now apply this discussion to extract signatures from the Casson-Gordon invariant $\tau(K,\chi)$ which is an element of $W(\C(t)) \otimes \Q$.

\begin{definition}
  \label{def:CassonGordonSignature}
  The \emph{Casson-Gordon signature} associated to $\omega \in S^1$ is $\sign_\omega^{\text{av}}(\tau(K,\chi)) \in \Q$.
\end{definition}

Before returning to Blanchfield pairings, let us make Definition~\ref{def:CassonGordonSignature} somewhat more concrete.

\begin{remark}
  \label{rem:CassonGordonSignature}
  Suppose $V_n$ is a $4$-manifold whose boundary consists of a disjoint union of $r$ copies of~$M_n$ and over which $\alpha \times \chi$ extends.
  In other words, we have 
  \[\tau(K,\chi)= ([\lambda_{\C(t)_{\alpha\times \chi},V_n}]-i_*([\lambda_{\Q,V_n}^{\text{nonsing}}])) \otimes \frac{1}{r}.\] 
  Since $\sign_\omega^{\text{av}}$ is a homomorphism, we obtain
  \[\sign_\omega^{\text{av}}(\tau(K,\chi))=\frac{1}{r}(\sign_\omega^{\text{av}}([\lambda_{\C(t)_{\alpha\times \chi},V_n}])-\sign_\omega^{\text{av}}([\lambda_{\Q,V_n}^{\text{nonsing}}])).\] 
  Clearly the latter term is simply equal to $\sign(V_n)$, the (untwisted) signature of $V_n$.
  Consequently, if $A(t)$ represents $\lambda_{\C(t)_{\alpha\times \chi},V_n}$, then we deduce that
  $$\sign_\omega^{\text{av}}(\tau(K,\chi))=\frac{1}{r}(\sign^{\text{av}}_\omega(A(t))-\sign(V_n)).$$
  Note that $A(1)$ does \emph{not} represent the standard intersection form $\lambda_{\Q,V_n}$.
  It does however represent the twisted intersection form $\lambda_{\C_{\xi_{m}},V_n}$ which arises from the coefficient system $\pi_1(V_n) \to \Z_m \to \C$, where the latter map sends the generator of $\Z_m$ to $\xi_m=\text{exp}(\frac{2\pi i}{m})$.
\end{remark}

Returning to Blanchfield pairings, recall from Definition~\ref{def:BlanchfieldMetabelian} that $\Bl_{\alpha(n,\chi)}(K)$ denotes the twisted Blanchfield pairing associated to the representation $\alpha(n,\chi) \colon \pi_1(M_K) \to GL_n(\LC)$.
Recall furthermore that $\sigma_{K,\alpha(n,\chi)} \colon S^1 \to \Z$ denotes the associated twisted signature~function.

The following theorem relates our Blanchfield signatures to the Casson-Gordon signatures.

\begin{theorem}
  \label{thm:BlanchfieldCG}
  Let $K$ be an oriented knot and $n$ be an integer.
  Let $\Sigma_n(K)$ be the $n$-fold cyclic cover of $S^3$ branched along $K$ and let $M_n$ be the $n$-fold cyclic cover of $M_K$.
  If $\chi \colon H_1(\Sigma_n(K);\Z) \to~\Z_m$ is a nontrivial character of prime power order $m$, then the following statements hold:
  \begin{itemize}
  \item[(a)] The twisted Blanchfield pairing $\Bl_{\alpha(n,\chi)}(K)$ is representable.
  \item[(b)] For each $\omega$ in $S^1$, we have the equality 
    $$  -\sigma^{\text{av}}_{K,\alpha(n,\chi)}(\omega)=\sign_\omega^{\text{av}}(\tau(K,\chi))-\sign_1^{\text{av}}(\tau(K,\chi)).$$
  \end{itemize}
\end{theorem}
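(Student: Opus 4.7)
By Corollary~\ref{cor:BlanchfieldDownstairs}, the twisted Blanchfield pairings \(\Bl_{\alpha(n,\chi)}(K)\) and \(\Bl_{\alpha\times\chi}(M_n)\) are isometric, so it suffices to prove both statements for the latter. Following the construction preceding Definition~\ref{def:CassonGordonInvariant}, I take a compact oriented \(4\)-manifold \(V_n\) with \(\partial V_n=rM_n\) and an extension \(\psi\colon\pi_1(V_n)\to\Z\times\Z_m\) of \(\alpha\times\chi\); by Lemma~\ref{lem:CassonGordon} the twisted intersection form \(\lambda_{\C(t)_{\alpha\times\chi},V_n}\) on \(H_2(V_n;\C(t)_{\alpha\times\chi})\) is non-singular. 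The entire plan rests on producing a single Hermitian matrix \(A(t)\) over \(\LC\) which simultaneously represents \(\lambda_{\C(t)_{\alpha\times\chi},V_n}\) over \(\C(t)\) and \(-\bigoplus^{r}\Bl_{\alpha\times\chi}(M_n)\) over \(\LC\).

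Such a matrix \(A(t)\) is obtained from a handle decomposition of \(V_n\) relative to a collar of \(\partial V_n\) by recording the twisted intersection pairings of the cores of the 2-handles in \(\LC_{\alpha\times\chi}\)-coefficients. Its invertibility over \(\C(t)\) is precisely the non-singularity of \(\lambda_{\C(t)_{\alpha\times\chi},V_n}\). Its identification with the boundary linking form \(-\bigoplus^{r}\Bl_{\alpha\times\chi}(M_n)\) is a standard (but delicate) diagram chase comparing the intersection form, via Poincar\'e–Lefschetz duality on \((V_n,\partial V_n)\) and the long exact sequence of the pair, with the Bockstein map used to define the Blanchfield pairing in Definition~\ref{def:Blanchfield}; the overall minus sign reflects the fact that \(\partial V_n\) inherits the opposite orientation in this comparison. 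Nailing down this identification, and particularly the sign, is the main technical obstacle of the whole argument.

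Granting the construction of \(A(t)\), part~(a) follows quickly. Since \(\bigoplus^{r}\Bl_{\alpha\times\chi}(M_n)\) is representable, Corollary~\ref{cor:WittRepresentability} shows that its total signature jump vanishes. Additivity of the total signature jump under orthogonal sums, together with the assumption \(r>0\), forces the total signature jump of \(\Bl_{\alpha\times\chi}(M_n)\) itself to vanish. A second application of Corollary~\ref{cor:WittRepresentability} then yields representability of \(\Bl_{\alpha\times\chi}(M_n)\), and therefore of \(\Bl_{\alpha(n,\chi)}(K)\) via Corollary~\ref{cor:BlanchfieldDownstairs}.

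For part~(b), applying Corollary~\ref{cor:sigissig2} to the matrix \(A(t)\) gives
\[\sign^{\text{av}}A(\omega)-\sign^{\text{av}}A(1)\;=\;-\,r\,\sigma^{\text{av}}_{K,\alpha(n,\chi)}(\omega),\]
because \(A(t)\) represents \(-\bigoplus^{r}\Bl_{\alpha\times\chi}(M_n)\) and signatures of averaged linking forms scale additively with direct sums. On the other hand, writing Remark~\ref{rem:CassonGordonSignature} out at both \(\omega\) and \(1\) and subtracting causes the constant \(\sign V_n\) to cancel, producing
\[\sign^{\text{av}}A(\omega)-\sign^{\text{av}}A(1)\;=\;r\bigl(\sign^{\text{av}}_{\omega}\tau(K,\chi)-\sign^{\text{av}}_{1}\tau(K,\chi)\bigr).\]
Equating these two expressions and dividing through by the positive integer \(r\) gives exactly the identity asserted in part~(b).
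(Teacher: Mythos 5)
Your overall architecture matches the paper's: reduce to $\Bl_{\alpha\times\chi}(M_n)$ via Corollary~\ref{cor:BlanchfieldDownstairs}, take a bounding $4$-manifold $V_n$ with an extension of $\alpha\times\chi$, produce a single Hermitian matrix $A(t)$ over $\LC$ that simultaneously represents the $\C(t)$-intersection form and $-\Bl_{\alpha\times\chi}(\partial V_n)$, and then apply Corollaries~\ref{cor:WittRepresentability} and~\ref{cor:sigissig2} together with Remark~\ref{rem:CassonGordonSignature}; the bookkeeping you do in parts (a) and (b), including the cancellation of $\sign(V_n)$ and the division by $r$, reproduces exactly the calculations in the paper's proof.

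The gap is the one you yourself flag: the existence of such an $A(t)$ together with the dual interpretation and the sign. This is precisely what the paper isolates as Proposition~\ref{prop:Boundary}, and it is the bulk of the work. Two points of caution about your sketch of it. First, the paper does not build $A(t)$ from a handle decomposition of $(V_n,\partial V_n)$; it instead takes the matrix of the $\LC$-valued intersection pairing $\lambda_{\LC_{\alpha\times\chi},V_n}$ restricted to the \emph{free part} $LH_2(V_n;\LC_{\alpha\times\chi})$ with respect to a chosen basis $\mathcal{C}$ and its dual $\mathcal{C}^\ast$. A handle description would in general produce a chain complex, not a single matrix, unless one first establishes vanishing of the other twisted chain/homology groups; and one must also use the free part in degree $2$ since the full $H_2$ may have torsion. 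Second, making sense of this $\LC$-valued pairing on $LH_2$, and showing that the map $LH_2(V_n;\LC_{\alpha\times\chi})\to H_2(V_n,\partial V_n;\LC_{\alpha\times\chi})$ is represented by $A(t)^T$, requires the technical facts proved in Lemma~\ref{lem:ev} (vanishing of $H_1(V_n;\LC_{\alpha\times\chi})$, the evaluation map being an isomorphism, freeness of $H_2(V_n,\partial V_n;\LC_{\alpha\times\chi})$, and the isomorphism $H^2(V_n,\partial V_n;\C(t))\cong H^2(V_n;\C(t))$), and then a commutative (and in one square, anti-commutative) ladder relating duality, the Bockstein, and evaluation. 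Your proposal does not even gesture at these preparations, and without them the claimed identification is unsupported. You should supply a proof of the analogue of Proposition~\ref{prop:Boundary} (or cite one) before treating the remainder as complete.
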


Before moving towards the proof, we start with a remark and a corollary to Theorem~\ref{thm:BlanchfieldCG}.

\begin{remark}
  \label{rem:RemarkOnCGTheorem}
  Although we stated Theorem~\ref{thm:BlanchfieldCG} using the metabelian Blanchfield pairing $\Bl_{\alpha(n,\chi)}(K)$, the proof will use the Blanchfield pairing of the cover $\operatorname{Bl}_{\alpha \times \chi}(M_n)$.
  Indeed, since Corollary~\ref{cor:BlanchfieldDownstairs} states that $\operatorname{Bl}_{\alpha(n,\chi)}(K)$ and $\operatorname{Bl}_{\alpha \times \chi}(M_n)$ are isometric, one linking form is representable if and only if the other one is and, for every $\omega \in S^1$, the second item of Theorem~\ref{thm:BlanchfieldCG} could have been written using the signature function of $\operatorname{Bl}_{\alpha \times \chi}(M_n)$, since the following equality holds:
  \begin{equation}
    \label{eq:SignatureEquality}
    \sigma^{\text{av}}_{K,\alpha(n,\chi)}(\omega)= \sign_\omega^{\text{av}}(\Bl_{\alpha \times \chi}(M_n)).
  \end{equation}
\end{remark}

Since $\tau(K,\chi)$ is known to provide an obstruction to sliceness~\cite{CassonGordon2}, a similar conclusion holds for~$\sigma_{K,\alpha(n,\chi)}$, thus yielding a second proof of (a variation on) Theorem~\ref{thm:MillerPowellSignature}.
Indeed, recalling that~$\lambda_n$ denotes the $\Q/\Z$-valued linking form on $H_1(\Sigma_n(K);\Z)$, the following result is an immediate corollary of Theorem~\ref{thm:BlanchfieldCG} and \cite[Theorem 2]{CassonGordon2}.

\begin{corollary}
  \label{cor:CG}
  If $K$ is slice, then there is a subgroup $G$ of $H_1(\Sigma_n(K))$ such that \(|G|^{2} = |H_{1}(\Sigma_{n}(K))|\) and $\lambda_n$ vanishes on $G$ and, for every character $\chi$ vanishing on $G$, the signature function $ \sigma^{\text{av}}_{K,\alpha(n,\chi)}(\omega)$ is zero.
\end{corollary}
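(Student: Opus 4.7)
The plan is to combine Theorem~\ref{thm:BlanchfieldCG}(b) with the classical sliceness obstruction of Casson--Gordon in its most basic form. Recall that \cite[Theorem~2]{CassonGordon2} asserts that if $K$ is slice, then for any prime power $n$ there exists a subgroup $G \subset H_1(L_n(K);\Z)$ which is a metabolizer for $\lambda_n$ (so in particular $\lambda_n$ vanishes on $G$), and such that for every nontrivial character $\chi \colon H_1(L_n(K);\Z) \to \Z_m$ of prime power order $m$ which vanishes on $G$, the Witt class $\tau(K,\chi) \in W(\C(t)) \otimes \Q$ is zero.

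Given this, the proof will proceed as follows. First I would pick $G$ as provided by Casson--Gordon's theorem. Let $\chi$ be any character vanishing on $G$. If $\chi$ is trivial, the signature function $\sigma^{\text{av}}_{K,\alpha(n,\chi)}$ reduces to a signature of the untwisted Blanchfield form of the $n$-fold cover, which vanishes directly since $K$ is slice (alternatively, the corollary need only be stated for nontrivial prime-power-order $\chi$, matching the setting of Theorem~\ref{thm:BlanchfieldCG}). For nontrivial $\chi$ of prime power order, Casson--Gordon gives $\tau(K,\chi) = 0$, so that both $\sign_\omega^{\text{av}}(\tau(K,\chi))$ and $\sign_1^{\text{av}}(\tau(K,\chi))$ are zero for every $\omega \in S^1$, since $\sign_\omega^{\text{av}}$ is a well-defined homomorphism on $W(\C(t)) \otimes \Q$.

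Invoking Theorem~\ref{thm:BlanchfieldCG}(b), we then get
\[
-\sigma^{\text{av}}_{K,\alpha(n,\chi)}(\omega) = \sign_\omega^{\text{av}}(\tau(K,\chi)) - \sign_1^{\text{av}}(\tau(K,\chi)) = 0
\]
for every $\omega \in S^1$, which is precisely the conclusion. There is essentially no obstacle here: all the genuine work has already been absorbed into Theorem~\ref{thm:BlanchfieldCG} (relating the averaged Blanchfield signature to averaged signatures of $\tau$) and into the Casson--Gordon sliceness obstruction. The only minor point to verify is that $\sign_\omega^{\text{av}}$ descends from $W(\C(t))$ to $W(\C(t)) \otimes \Q$, which is immediate from the fact that it is a $\Z$-valued homomorphism on the Witt group (recall Proposition~\ref{prop:avsig} and the discussion preceding Definition~\ref{def:CassonGordonSignature}).
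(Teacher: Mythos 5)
Your proposal is correct and is essentially identical to the paper's argument: the paper states that the corollary is ``an immediate corollary of Theorem~\ref{thm:BlanchfieldCG} and \cite[Theorem 2]{CassonGordon2}'', and you have simply spelled out that combination, including the minor care needed for the trivial character and the descent of $\sign_\omega^{\text{av}}$ to $W(\C(t))\otimes\Q$.
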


We now move towards the proof of Theorem~\ref{thm:BlanchfieldCG}.
First of all, as we mentioned in Remark~\ref{rem:RemarkOnCGTheorem}, we can work with the pairing $\Bl_{\alpha \times \chi}(M_n)$ instead of $\Bl_{\alpha(n,\chi)}(K)$.
Next, as in Subsection~\ref{sub:CassonGordon}, we choose a smooth
$4$-manifold~$V_n$ such that the boundary of $V_n$ consists of $r$ copies of $M_n$ and such that the representation extends.
Without loss of generality, we can assume that $\pi_1(V_n)=\Z_m \times \Z$: indeed since~$\Z_m \times \Z$ is finitely normally presented,
one can perform finitely many surgeries to obtain the desired fundamental group, while leaving the boundary fixed.

The following lemma collects some algebraic statements which we shall need later on.

\begin{lemma}
  \label{lem:ev}
  The following statements hold:
  \begin{itemize}
  \item[(a)] The $\LC$-module $H_1(V_n;\LC_{\alpha \times \chi})$ vanishes.
  \item[(b)] The map  $\ev \circ \kappa \colon H^2(V_n;\LC_{\alpha \times \chi}) \rightarrow \makeithash{\Hom_{\LC}(H_2(V_n;\LC_{\alpha \times \chi}),\LC)}$ is an isomorphism.
  \item[(c)] The $\LC$-module $H_2(V_n,\partial V_n;\LC_{\alpha \times \chi})$ is free.
  \item[(d)] The inclusion induced map $ H^2(V_n,\partial V_n;\C(t)_{\alpha\times \chi}) \to H^2(V_n;\C(t)_{\alpha\times \chi})$ is an isomorphism.
  \end{itemize}
\end{lemma}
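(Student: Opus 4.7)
The overarching strategy is to exploit the fact (recalled in the sentence preceding the lemma) that after surgery we may assume $\pi_{1}(V_n)=\Z\times\Z_m$, so that the classifying map $f\colon V_n\to B(\Z\times\Z_m)$ is a $\pi_1$-isomorphism and, in particular, induces isomorphisms on twisted $H_0$ and $H_1$.  The four items then reduce to a group-homology computation plus standard homological algebra.

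For item~(a), the plan is to show $H_1(V_n;\LC_{\alpha\times\chi})\cong H_1(\Z\times\Z_m;\LC_{\alpha\times\chi})=0$.  The first isomorphism follows from the observation above about $f$.  For the second, one notes that $\LC_{\alpha\times\chi}$ factors as a tensor product over $\C$ of two commuting module structures: the $\Z$-factor acts by multiplication by $t$ on $\LC_t$, and the $\Z_m$-factor acts by multiplication by $\xi_m$ on $\C_{\xi_m}$.  A K\"unneth-type formula (available since the ground ring is $\C$) yields
\[
H_k\bigl(\Z\times\Z_m;\LC_t\otimes_\C\C_{\xi_m}\bigr)\cong\bigoplus_{i+j=k}H_i(\Z;\LC_t)\otimes_\C H_j(\Z_m;\C_{\xi_m}).
\]
Since $\xi_m\neq 1$ and $|\Z_m|=m$ is invertible in $\C$, Maschke's theorem gives $H_j(\Z_m;\C_{\xi_m})=0$ for all $j\geq 0$, so every summand vanishes.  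This is the most delicate step, though it is essentially standard for Casson--Gordon style arguments.

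Items~(b) and~(c) are then quick consequences.  For~(b), the universal coefficient theorem over the PID $\LC$ gives a short exact sequence
\[
0\to\Ext^1_{\LC}\bigl(H_1(V_n;\LC_{\alpha\times\chi}),\LC\bigr)\to H^2(V_n;\LC_{\alpha\times\chi})\xrightarrow{\operatorname{ev}\circ\kappa}\Hom_{\LC}\bigl(H_2(V_n;\LC_{\alpha\times\chi}),\LC\bigr)^{\#}\to 0,
\]
and the $\Ext^1$ term vanishes by~(a).  For~(c), Poincar\'e--Lefschetz duality and~(b) combine to give
\[
H_2(V_n,\partial V_n;\LC_{\alpha\times\chi})\cong H^2(V_n;\LC_{\alpha\times\chi})\cong\Hom_{\LC}\bigl(H_2(V_n;\LC_{\alpha\times\chi}),\LC\bigr)^{\#},
\]
and the right-hand side is a finitely generated torsion-free module over the PID $\LC$, hence free.

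For item~(d), the plan is to look at the long exact cohomology sequence of the pair $(V_n,\partial V_n)$ with $\C(t)_{\alpha\times\chi}$-coefficients.  Since $\partial V_n$ is a disjoint union of $r$ copies of $M_n$, Lemma~\ref{lem:CassonGordon} gives $H_*(\partial V_n;\C(t)_{\alpha\times\chi})=0$; because $\C(t)$ is a field, the universal coefficient theorem then forces $H^{*}(\partial V_n;\C(t)_{\alpha\times\chi})=0$ as well, and the long exact sequence collapses to the desired isomorphism $H^2(V_n,\partial V_n;\C(t)_{\alpha\times\chi})\cong H^2(V_n;\C(t)_{\alpha\times\chi})$.
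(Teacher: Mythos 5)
Your proof is correct, and for items (b), (c), and (d) it coincides with the paper's argument up to minor reformulation (in (d) you run the long exact sequence in cohomology and pass from homology vanishing on $\partial V_n$ to cohomology vanishing via the universal coefficient theorem over the field $\C(t)$, whereas the paper runs the homological sequence and then applies Poincar\'e duality; these are equivalent). A small remark: for (b) only $H_1(V_n;\LC_{\alpha\times\chi})=0$ enters the $\operatorname{Ext}^1$ term of the universal coefficient sequence for $H^2$, which you correctly isolate; the paper's additional appeal to $H_0=0$ is not actually needed there.

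The genuinely different step is (a). You reduce to the group homology $H_1(\Z\times\Z_m;\LC_{\alpha\times\chi})$ via the classifying map --- valid since $\pi_1(V_n)=\Z\times\Z_m$ after the surgeries, so the Cartan--Leray spectral sequence for the universal cover gives $H_1(V_n;M)\cong H_1(\pi_1(V_n);M)$ --- and then apply a K\"unneth decomposition for group homology together with Maschke's theorem to kill $H_j(\Z_m;\C_{\xi_m})$. The paper instead observes that $\LC_{\alpha\times\chi}$ is a direct summand of $\C[\Z\times\Z_m]$ as a $\C[\Z\times\Z_m]$-module (again Maschke applied to the $\Z_m$-factor, via $\C[\Z_m]=\bigoplus_j\C_{\xi_m^j}$), so $H_1(V_n;\LC_{\alpha\times\chi})$ is a direct summand of $H_1(V_n;\C[\Z\times\Z_m])$, which is the first homology of the simply connected universal cover and hence vanishes. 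The paper's route is shorter and avoids the group-homology K\"unneth formula; yours makes the underlying group-theoretic content more explicit. Both hinge on the semisimplicity of $\C[\Z_m]$ and the nontriviality of the character $\chi$.
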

\begin{proof}
  We start by proving (a).
  Set $G:=\Z_m \times \Z$ for brevity. Recalling the notation introduced below~\eqref{eq:ModuleStructure}, we claim that \(\LC_{G}\) is a projective \(\C[G]\)-module.
  To see this, first note that \(\C[G] = \C[t^{\pm 1}] \otimes_{\C} \C[\Z_m]\) and \(\C[\Z_m] = \bigoplus_{j = 0}^{m-1}~\C_{\xi_m^j}\), where~\(\C_{\xi_m^j}\) denotes the irreducible complex representation of \(\Z_m\) with the action of the cyclic group given by multiplication by the root of unity \(\xi_m^j\), for \(\xi_m = \exp\left(\frac{2 \pi i}{m}\right)\).
  Therefore, we have \(\C[G] = \bigoplus_{j=0}^{m-1} \C[t^{\pm 1}] \otimes_{\C}~\C_{\xi_m^j}\) and \(\LC_{G} = \C[t^{\pm 1}] \otimes_{\C}~\C_{\xi_m}\), concluding the proof of the claim.
  The claim now implies that $H_1(V_n; \LC_{\alpha \times \chi})$ is a summand of~$H_1(V_n; \C[G])$.
  Since $\pi_1(V_n)=G$, the corresponding $G$-cover of $V_n$ is simply-connected and thus~$H_1(V_n;\C[G])$ vanishes.
  Consequently \(H_1(V_n; \LC_{\alpha \times \chi})\) also vanishes since it is a direct summand of \(H_1(V_n; \C[G])\).
  This concludes the proof of (a).

  We now prove (b) according to which the evaluation map
  \[\ev \circ \kappa \colon H^2(V_n;\LC_{\alpha \times \chi}) \rightarrow \makeithash{\Hom_{\LC}(H_2(V_n;\LC_{\alpha \times \chi}),\LC)}\]
  is an isomorphism.
  Since $H_1(V_n;\LC_{\alpha \times \chi})=0$ by the first statement and \(H_0(V_n;\LC_{\alpha \times \chi})=~0\) (recall Lemma~\ref{lem:H0CG}), this follows immediately from the universal coefficient theorem.

  We move on to (c) which asserts that $H_2(V_n,\partial V_n;\LC_{\alpha \times \chi})$ is a free $\LC$-module.
  Using Poincar\'e duality and the second statement, we deduce that $H_2(V_n,\partial V_n;\LC_{\alpha \times \chi})$ is isomorphic to $\makeithash{\Hom_{\LC}(H_2(V_n;\LC_{\alpha \times \chi}),\LC)}$ which is free because $\LC$ is a PID.

  Finally, we deal with (d), that is we show that the inclusion induced map~$H^2(V_n,\partial V_n;\C(t)_{\alpha\times \chi}) \to H^2(V_n;\C(t)_{\alpha\times \chi})$ is an isomorphism.
  Since $H_*(M_n;\C(t)_{\alpha\times \chi})$ vanishes by Lemma~\ref{lem:CassonGordon} and $\partial V_n$ consists of~$r$ disjoint copies of $M_n$, we deduce that~$H_*(\partial V_n;\C(t)_{\alpha\times \chi})$ also vanishes.
  The long exact sequence of the pair~$(V_n,\partial V_n)$ implies that $H_2(V_n;\C(t)_{\alpha\times \chi}) \rightarrow H_2(V_n,\partial V_n;\C(t)_{\alpha\times \chi})$ is an isomorphism and the result follows by Poincar\'e duality.
  This concludes the proof of (d) and thus the proof of the lemma.
\end{proof}

Since $\LC$ is a PID, we can decompose any $\LC$-module $H$ as the direct sum of its free part~$FH$
  and its torsion part $TH$.
While $FH$ is typically defined as $H/TH$, here it is convenient to think of $FH$ as a submodule of $H$ such that $H  \cong FH \oplus TH$.
While such a submodule is not unique, different choices will not affect the remainder of the proof. 
In particular, it follows that
\[\makeithash{\Hom_{\LC}(H_2(V_n;\LC_{\alpha \times \chi}),\LC)}=\makeithash{\Hom_{\LC}(FH_2(V_n;\LC_{\alpha \times \chi}),\LC)}.\] 
Consequently, point (b) of Lemma~\ref{lem:ev} provides the following well-defined isomorphism:
\begin{equation}
  \label{eq:EvaluationFree}
  ev \circ \kappa \colon H^2(V_n;\LC_{\alpha \times \chi}) \rightarrow \makeithash{\Hom_{\LC}(FH_2(V_n;\LC_{\alpha \times \chi}),\LC)}.
\end{equation}
Composing the inclusion $FH_2(V_n;\LC_{\alpha \times \chi}) \to H_2(V_n;\LC_{\alpha \times \chi})$ with the inclusion induced map $H_2(V_n;\LC_{\alpha \times \chi}) \to H_2(V_n,\partial V_n;\LC_{\alpha \times \chi})$ gives rise to a well-defined map 
\[FH_2(V_n;\LC_{\alpha \times \chi}) \to H_2(V_n,\partial V_n;\LC_{\alpha \times \chi}).\] 
Combining these remarks, there is a well-defined intersection form
$$\lambda_{\LC_{\alpha \times \chi},V_n} \colon FH_2(V_n;\LC_{\alpha \times \chi}) \times FH_2(V_n;\LC_{\alpha \times \chi}) \to \LC.$$
Here,  note that the choice of the submodule of $FH$ does not  affect the isometry type of $\lambda_{\LC_{\alpha \times \chi},V_n}$.

Next, we describe the Blanchfield $\Bl_{\alpha \times \chi}(\partial V_n)$ on $H_1(\partial V_n;\LC_{\alpha \times \chi})$.
As $\partial V_n$ consists of $r$ copies of~$M_n$, we deduce that $H_1(\partial V_n;\LC_{\alpha \times \chi})$ is isomorphic to the direct sum of~$r$ copies of $H_1(M_n;\LC_{\alpha \times \chi})$.
Since the latter $\LC$-module is torsion, so is the former.
We deduce that $ \Bl_{\alpha\times \chi}(\partial V_n)$ decomposes as the direct sum $\Bl_{\alpha \times \chi}(M_n) \oplus \cdots \oplus \Bl_{\alpha \times \chi}(M_n)$, where there are $r$ direct summands.

The proof of the next result is postponed until the end of the proof of Theorem~\ref{thm:BlanchfieldCG}.
\begin{proposition}
  \label{prop:Boundary}
  Any matrix $A(t)$ representing $\lambda_{\LC_{\alpha \times \chi},V_n}$ also represents~$\lambda_{\C(t)_{\alpha\times \chi},V_n}$ and~$-\Bl_{\alpha \times \chi}(\partial V_n)$.
\end{proposition}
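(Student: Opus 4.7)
The plan is to handle the two assertions in turn: the first is essentially formal, while the second requires a diagram chase comparing the connecting map in the long exact sequence of $(V_n,\partial V_n)$ to the Bockstein used in the definition of $\Bl_{\alpha\times\chi}(\partial V_n)$.

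For the first assertion, fix a basis $e_1,\ldots,e_k$ of the free $\LC$-module $LH_2(V_n;\LC_{\alpha\times\chi})$ in which the intersection form is represented by the Hermitian matrix $A(t)$. Since $\C(t)$ is flat over $\LC$ and tensoring with $\C(t)$ kills the torsion, the natural map
\[
LH_2(V_n;\LC_{\alpha\times\chi})\otimes_{\LC}\C(t)\longrightarrow H_2(V_n;\C(t)_{\alpha\times\chi})
\]
is an isomorphism, and the $\C(t)$-valued intersection form is obtained by extension of scalars from the $\LC$-valued one. Hence $A(t)$ also represents $\lambda_{\C(t)_{\alpha\times\chi},V_n}$. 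By Lemma~\ref{lem:CassonGordon} this latter form is non-singular, so $\det A(t)\neq 0$, which is what is needed to make sense of ``$A(t)$ represents'' the linking form in the sense of Definition~\ref{def:RepresentBlanchfield}.

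For the second assertion, consider the fragment of the long exact sequence of the pair $(V_n,\partial V_n)$ with $\LC_{\alpha\times\chi}$-coefficients,
\[
H_2(V_n;\LC_{\alpha\times\chi})\xrightarrow{\;j_*\;} H_2(V_n,\partial V_n;\LC_{\alpha\times\chi})\xrightarrow{\;\partial_*\;} H_1(\partial V_n;\LC_{\alpha\times\chi})\longrightarrow H_1(V_n;\LC_{\alpha\times\chi}),
\]
whose last term vanishes by Lemma~\ref{lem:ev}(a). Combining Poincar\'e--Lefschetz duality with the evaluation isomorphism of~\eqref{eq:EvaluationFree} identifies $H_2(V_n,\partial V_n;\LC_{\alpha\times\chi})$ with $\Hom_{\LC}(LH_2(V_n;\LC_{\alpha\times\chi}),\LC)^{\#}$ (this module is free by Lemma~\ref{lem:ev}(c)), and the composition of $j_*$ with this identification, restricted to $LH_2$, is given in the dual basis by $A(t)^T$ (the torsion summand of $H_2(V_n;\LC_{\alpha\times\chi})$ maps to zero since the target is torsion-free). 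Exactness then yields a canonical isomorphism
\[
H_1(\partial V_n;\LC_{\alpha\times\chi})\cong \LC^k\big/A(t)^T\LC^k,
\]
so the underlying module of $\Bl_{\alpha\times\chi}(\partial V_n)$ agrees with that of $\lambda_A$.

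The remaining and main step is to check that, under this identification, the Blanchfield form is $-\lambda_A$. For this one sets up the standard commutative diagram relating the connecting map $\partial_*$ above to the Bockstein $\operatorname{BS}\colon H^1(\partial V_n;\C(t)/\LC)\to H^2(\partial V_n;\LC)$ used to define $\Bl_{\alpha\times\chi}(\partial V_n)$: namely, given $x\in H_1(\partial V_n;\LC)$, lift it through $\partial_*$ to a class $\widetilde x\in H_2(V_n,\partial V_n;\LC)$ and, via the duality/evaluation identification, to a cochain $\varphi\in \Hom_{\LC}(LH_2(V_n;\LC),\LC)^{\#}$; a lift of $\varphi$ over $\C(t)$ exists because $A(t)$ is invertible over $\C(t)$, and a direct chase through the snake lemma between the short exact sequence $0\to \LC\to \C(t)\to \C(t)/\LC\to 0$ and Poincar\'e duality shows that the Blanchfield pairing $\Bl_{\alpha\times\chi}(\partial V_n)(x,y)$ is computed by $\varphi(A(t)^{-1}\psi)\bmod \LC$ with a uniform sign, yielding $\lambda_A$ up to the sign predicted by the usual boundary formula in the Ranicki--Wall localization exact sequence. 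I expect the hard part to be exactly this bookkeeping: tracking the signs that come out of Poincar\'e--Lefschetz duality for a manifold with boundary, the Bockstein, and our sesquilinearity convention $\lambda_A([x],[y])=x^TA^{-1}y^{\#}$, and in particular pinning down the overall minus sign; the underlying algebra is the standard statement that the boundary map in the localization sequence $W(\C(t))\to W(\C(t),\LC)$ sends an intersection form on a $4$-manifold to minus the Blanchfield form of its boundary, already implicitly used in~\cite{BorodzikFriedl,BorodzikFriedl2}.
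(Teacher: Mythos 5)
Your overall strategy coincides with the paper's: exploit the long exact sequence of the pair $(V_n,\partial V_n)$, the vanishing of $H_1(V_n;\LC_{\alpha\times\chi})$, the freeness of $H_2(V_n,\partial V_n;\LC_{\alpha\times\chi})$, Poincar\'e--Lefschetz duality and the evaluation isomorphism to realize $H_1(\partial V_n;\LC_{\alpha\times\chi})$ as the cokernel of $A(t)^T$, and then track the Bockstein through a diagram chase to identify the linking form as $-\lambda_A$. Your treatment of the first assertion (flat base change killing torsion and identifying the $\C(t)$-intersection form as the extension of scalars of the restriction of the $\LC$-form to $LH_2$) is correct and in the same spirit as the paper. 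Your identification of the underlying module via $j_*|_{LH_2}\sim A(t)^T$ and the vanishing of $H_1(V_n;\LC_{\alpha\times\chi})$ is also correct.

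The genuine gap is precisely the one you flag yourself: you never actually carry out the sign computation. That computation is the mathematical content of the proposition, not a bookkeeping afterthought. In the paper the minus sign comes out of a large commutative diagram (their~\eqref{eq:LargeDiagram}) relating $\mathrm{PD}$, $\mathrm{BS}^{-1}$ and $\ev$, and the crucial fact is that the middle rectangle of that diagram \emph{anti}-commutes; this anti-commutativity is a nontrivial lemma (cited there as~\cite[Lemma~5.4]{ConwayFriedlToffoli}, with proof in~\cite[Appendix~A]{ConwayBlanchfield}), not something one gets for free from the snake lemma together with ``the usual boundary formula.'' Without either (i) constructing and chasing an explicit analogue of that diagram, with care about the sign in Poincar\'e--Lefschetz duality for the pair $(V_n,\partial V_n)$ and the convention $\lambda_A([x],[y])=x^TA^{-1}y^{\#}$, or (ii) citing the anti-commutativity lemma and then unwinding it in the chosen bases exactly as the paper does, the proof is an outline with a declared hole at its center. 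Appealing to ``the standard statement'' about the boundary map in the localization sequence begs the question, since that statement (with the sign) is essentially what is being proved here.
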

\begin{proof}[Proof of Theorem~\ref{thm:BlanchfieldCG} assuming Proposition~\ref{prop:Boundary}]
  We start by proving the second statement, namely the equality of signatures.
  Let $A(t)$ be a matrix representing $\lambda_{\LC_{\alpha \times \chi},V_n}$.
  Since $A(t)$ represents the twisted intersection form $\lambda_{\C(t)_{\alpha\times \chi},V_n}$, the definition of the Casson-Gordon signature and Remark~\ref{rem:CassonGordonSignature} imply that 
  \begin{equation}
    \label{eq:CassonGordonForProof}
    \sign_\omega^{\text{av}}(\tau(K,\chi))=\frac{1}{r}(\sign^{\text{av}}_\omega(A(t))-\sign(V_n)).
  \end{equation}
  Since Proposition~\ref{prop:Boundary} tells us that $A(t)$ also represents $-\Bl_{\alpha \times \chi}(\partial V_n)$, item~\ref{item:S5} of Proposition~\ref{prop:SignatureFunctionAlgebra} implies that
  $-\sign_\omega^{\text{av}}(\Bl_{\alpha \times \chi}(\partial V_n))=\sign^{\text{av}}_\omega(A(t))-\sign^{\text{av}}_1(A(t))$.
  On the other hand, since $\partial V_n$ consists of $r$ disjoint copies of $M_n$, we observed previously that $ \Bl_{\alpha \times \chi}(\partial V_n)=\Bl_{\alpha \times \chi}(M_n) \oplus \ldots \oplus \Bl_{\alpha \times \chi}(M_n)$ and consequently $\sign_\omega^{\text{av}}(\Bl_{\alpha \times \chi}(\partial V_n))=r \cdot  \sign_\omega^{\text{av}}(\Bl_{\alpha \times \chi}(M_n))$.
  Combining these two observations, we obtain the following equality:
  \begin{equation}
    \label{eq:BlanchfieldSignatureProof2}
    -\sign^{\text{av}}_\omega(\Bl_{\alpha \times \chi}( M_n))=\frac{1}{r}(\sign^{\text{av}}_\omega(A(t))-\sign^{\text{av}}_1(A(t))).
  \end{equation}
  Finally, adding and subtracting a $\text{sign}(V_n)$ term in~(\ref{eq:BlanchfieldSignatureProof2}) and combining the result with~(\ref{eq:CassonGordonForProof}), we can conclude the proof of the first assertion: 
  \begin{align*}
    -\sign^{\text{av}}_\omega(\Bl_{\alpha \times \chi}( M_n))
    &=\frac{1}{r}\left(\sign^{\text{av}}_\omega(A(t))-\sign(V_n)\right)-\frac{1}{r}\left(\sign^{\text{av}}_1(A(t))-\sign(V_n) \right) \\
    &=\sign_\omega^{\text{av}}(\tau(K,\chi))-\sign_1^{\text{av}}(\tau(K,\chi)).
  \end{align*}

  It remains to prove the first statement, namely that the twisted Blanchfield pairing $\Bl_{\alpha \times \chi}(M_n)$ is representable.
  In the previous paragraph, we showed that $ \Bl_{\alpha \times \chi}(\partial V_n)=\Bl_{\alpha \times \chi}(M_n) \oplus \ldots \oplus \Bl_{\alpha \times \chi}(M_n)$.
 Since we know from Proposition~\ref{prop:Boundary} that $\Bl_{\alpha \times \chi}(\partial V_n)$ is representable,
    $\Bl_{\alpha \times \chi}(M_n) $ is itself representable
     ($\Bl_{\alpha\times\chi}(\partial V_n)$ satisfies the signature condition of item~\ref{item:S6} of Proposition~\ref{prop:SignatureFunctionAlgebra} and thus so does~$\Bl_{\alpha\times\chi}(M_n)$),
  and so the theorem is proved.
\end{proof}

We now prove Proposition~\ref{prop:Boundary}; the strategy follows very closely~\cite[Section 5.2]{ConwayFriedlToffoli} which itself is based on the proof of~\cite[Theorem 2.6]{BorodzikFriedl}.

\begin{proof}[Proof of  Proposition~\ref{prop:Boundary}]
  Consider the following diagram in which all homomorphisms are understood to be homomorphisms of $\LC$-modules:
  \begin{equation}
    \label{eq:LargeDiagram}
    \xymatrix@R0.6cm@C-1cm{
      FH_2 \ar[rr] \ar[dd]^{\Theta} && H\ar[r]^-\partial \ar[d]^{PD} & H_1(\partial V_n;\Lambda_{\alpha \times \chi})\ar[d]^{PD}\ar[rr] \ar@/^/[dddrr]^{\Omega} &&0\\
      && H^2(V_n;\Lambda_{\alpha \times \chi}) \ar[r]\ar[d]\ar[lld]^{\ev \circ \kappa}& H^2(\partial V_n;\Lambda_{\alpha \times \chi})\ar[d]^{BS^{-1}} && \\
      \makeithash{\Hom_\Lambda(FH_2,\Lambda)} \ar[d] && H^2(V_n;\Omega_{\alpha \times \chi})\ar[lld]^{\ev \circ \kappa}\ar[d]^{\cong} & H^1(\partial V_n;\Omega_{\alpha \times \chi}/\Lambda_{\alpha \times \chi})\ar[rrd]^{\ev \circ \kappa} \ar[d]&&   \\ 
      \makeithash{ \Hom_\Lambda(FH_2,\Omega) } \ar[d]^{\cong }&& H^2(V_n,\partial V_n;\Omega_{\alpha \times \chi})\ar@<-2pt>`d[r] `[r] [r]\ar[lld]^{\ev \circ \kappa}\ \ & H^2(V_n,\partial V_n;\Omega_{\alpha \times \chi}/\Lambda_{\alpha \times \chi}) \ar[rrd]^{\ev \circ \kappa}&& \makeithash{ \Hom_\Lambda(H_1(\partial V_n;\Lambda_{\alpha \times \chi}),\Omega/\Lambda) } \ar[d]_{\partial^{\#}} \\
      \makeithash{\Hom_\Lambda(H,\Omega)} \ar[rrrr]^{} &&&&&  \makeithash{ \Hom_\Lambda(H,\Omega/\Lambda) }.
    }
  \end{equation}
  To make the diagram more concise we have used the following shorthands:
  \begin{itemize}
  \item $\Lambda=\LC$;
  \item $FH_2=FH_2(V_n;\LC_{\alpha \times \chi})$;
  \item $H=H_2(V_n,\partial V_n;\LC_{\alpha \times \chi})$;
  \item $\Omega=\C(t)$.
  \end{itemize}

  The top horizontal line of \eqref{eq:LargeDiagram} is exact thanks to the long exact sequence of the pair $(V_n,\partial V_n)$ together with Lemma~\ref{lem:ev}: exactness at the rightmost end is guaranteed by the first point of Lemma~\ref{lem:ev},  while exactness at the middle follows from the third point.
  In more details, since~$H:=H_2(V_n,\partial V_n;\LC_{\alpha \times \chi})$ is free, the image of $H_2(V_n;\LC_{\alpha \times \chi})$ in $H$ is equal to the image of~$FH_2(V_n;\LC_{\alpha \times \chi})$ in $H$. 

The commutativity of the top middle square is a consequence of the definition of the Poincar\'e duality isomorphism.
  All the squares involving evaluation maps clearly commute, while the upper left (resp. right) square commutes by definition of the intersection (resp. Blanchfield) pairing.
  Finally, the middle rectangle \textit{anti}-commutes thanks to~\cite[Lemma~5.4]{ConwayFriedlToffoli}, see~\cite[Appendix A]{ConwayBlanchfield} for a proof.

  Inspired by~(\ref{eq:LargeDiagram}), we define a pairing $\theta$ on $H_2(V_n,\partial V_n;\LC_{\alpha \times \chi})$ by the composition
  \begin{align*}
    H_2(V_n,\partial V_n;\LC_{\alpha \times \chi}) &\xrightarrow{\PD} H^2(V_n;\LC_{\alpha \times \chi}) \rightarrow H^2(V_n;\C(t)_{\alpha\times \chi}) \cong H^2(V_n,\partial V_n;\C(t)_{\alpha\times \chi}) \\
                              &\rightarrow \makeithash{\Hom_{\LC}(H_2(V_n,\partial V_n;\LC_{\alpha \times \chi}),\C(t))},
  \end{align*} 
  where the third map is an isomorphism thanks to the fourth point of Lemma~\ref{lem:ev}.
  The commutativity of the diagram in equation~\eqref{eq:LargeDiagram} immediately implies the commutativity of 
  \begin{equation}
    \label{eq:DiagramThreeLines}
    \xymatrix@C1.4cm@R0.5cm{ FH_2(V_n;\LC_{\alpha \times \chi}) \times FH_2(V_n;\LC_{\alpha \times \chi}) \ar[r]^{\hskip 1cm \ \ \ \  \ \ \ \ \ \ \ \ -\lambda_{\LC_{\alpha \times \chi},V_n}}\ar[d]^{} & \LC\ar[d]  \\
      H_2(V_n,\partial V_n;\LC_{\alpha \times \chi}) \times H_2(V_n,\partial V_n;\LC_{\alpha \times \chi}) \ar[r]^{\ \ \ \ \ \ \ \ \ \ \ \ \ \ \ \ \ \ \ \  -\theta}\ar[d]^{\partial \times \partial} & \C(t) \ar[d]   \\
      H_1(\partial V_n;\LC_{\alpha \times \chi}) \times H_1(\partial V_n;\LC_{\alpha \times \chi}) \ar[r]^{\hskip 1cm \ \ \ \ \ \ \ \ \  \ \ \Bl_{\alpha \times \chi}(\partial V_n)}& \C(t)/\LC.
    }
  \end{equation} 

  We now pick bases in order to obtain matrices for the intersection form.
  Namely, choose any basis~$\mathcal{C}$ of~$FH_2(V_n;\LC_{\alpha \times \chi})$ and endow $\makeithash{\Hom_{\LC}(FH_2(V_n;\LC_{\alpha \times \chi}),\LC)}$ with the corresponding dual basis $\mathcal{C}^*$.
  Let $A(t)$ denote the matrix of the $\LC$-intersection form $\lambda_{\LC_{\alpha \times \chi},V_n}$ with respect to these bases.

  Next, we use $\mathcal{C}$ and $\mathcal{C}^*$ to base $H_2(V_n;\C(t)_{\alpha\times \chi})$ and $\makeithash{\Hom_{\LC}(H_2(V_n;\C(t)_{\alpha\times \chi}),\C(t))}$.
  In other words, as claimed in the first part of Proposition~\ref{prop:Boundary}, there are bases with respect to which both~$\lambda_{\C(t)_{\alpha\times \chi},V_n}$ and $\lambda_{\LC_{\alpha \times \chi},V_n}$ are represented by $A(t)$.
  Therefore, to conclude the proof it only remains to show that $A(t)$ also represents the Blanchfield pairing on $\partial V_n$.

  With this aim in mind, we start by providing a basis for $H_2(V_n,\partial V_n;\LC_{\alpha \times \chi})$ (which is free by Lemma~\ref{lem:ev}(b)).
  This will allow us to represent the inclusion induced map $FH_2(V_n;\LC_{\alpha \times \chi}) \to H_2(V_n,\partial V_n;\LC_{\alpha \times \chi})$ by a matrix.
  First, consider the following commutative diagram of $\LC$-homomorphisms:
  \begin{equation}
    \label{eq:DiagramForBasis}
    \xymatrix@R0.5cm@C0.9cm{
      FH_2(V_n;\LC_{\alpha \times \chi})\ar[r]\ar[d]^-{PD}_{\cong} & H_2(V_n,\partial V_n;\LC_{\alpha \times \chi})\ar[d]^-{PD}_{\cong} \\
      FH^2(V_n,\partial V_n;\LC_{\alpha \times \chi})\ar[r] & H^2(V_n;\LC_{\alpha \times \chi})\ar[d]^-{\ev \circ \kappa}_{\cong}\\
      &\makeithash{\Hom_{\LC}(FH_2(V_n;\LC_{\alpha \times \chi}),\LC)}.}
  \end{equation}
  Here the bottom-right map is an isomorphism thanks to the second point of Lemma~\ref{lem:ev}.
  We now equip $H_2(V_n,\partial V_n;\LC_{\alpha \times \chi})$ with the basis induced from $\mathcal{C}^*$ and two isomorphisms in the right column of diagram in~(\ref{eq:DiagramForBasis}).
  Arguing as in~\cite[Claim in Section 5.3]{ConwayFriedlToffoli}, elementary linear algebra shows that with respect to these bases, the map $FH_2(V_n;\LC_{\alpha \times \chi}) \to H_2(V_n,\partial V_n;\LC_{\alpha \times \chi})$ is represented by the matrix $A(t)^T$.
  Rewriting the commutative diagram  of equation (\ref{eq:DiagramThreeLines}) in terms of these bases, we obtain
  \[ \xymatrix@C2.3cm@R0.5cm{\LC^n \times \LC^n \ar[r]^{(a,b) \mapsto -a^T A(t) \makeithash{b}}\ar[d]_{(a,b) \mapsto (A(t)^Ta,A(t)^Tb)} & \LC \ar[d]  \\
      \LC^n \times \LC^n\ar[d]\ar[r]^{(a,b) \mapsto -a^T A(t)^{-1} \makeithash{b}} & \C(t) \ar[d] \\
      H_1(\partial V_n;\LC_{\alpha \times \chi}) \times H_1(\partial V_n;\LC_{\alpha \times \chi})\ar[r]^{\hskip 1cm \ \ \ \ \ \ \ \ \ \ \ \Bl_{\alpha \times \chi}(\partial V_n)} & \C(t)/\LC.
    } \]
  Here the middle horizontal map is determined by the top horizontal map, the vertical maps and the commutativity.
  Since $H_1(V_n;\LC_{\alpha \times \chi})$ vanishes by the first point of Lemma~\ref{lem:ev}, we deduce that the Blanchfield pairing on~$H_1(\partial V_n; \LC_{\alpha \times \chi})$ is represented by $-A(t)$.
  This concludes the proof of Proposition~\ref{prop:Boundary} and thus the proof of Theorem~\ref{thm:BlanchfieldCG}.
\end{proof}

\subsection{Satellite formulas for metabelian Blanchfield forms}
\label{sub:SatelliteMetabelian}

Given two knots $K,P$ and an unknotted curve $\eta$ in the complement of~$P$, we use $P(K,\eta)$ to denote the resulting satellite knot.
As described in Subsection~\ref{sub:Metabelian}, for a character 
$\chi \colon H_1(\Sigma_n(P(K,\eta));\Z) \to \Z_{m}$, there is an associated metabelian representation $\alpha(n,\chi) \colon \pi_1(M_{P(K,\eta)})  \to GL_n(\C[t^{\pm 1}])$.
The goal of this subsection is to apply the satellite formula of Theorem~\ref{thm:CablingTheorem} to~$\alpha(n,\chi)$. On the level of signatures, the result is reminiscent of Litherland's description of the behavior of the Casson-Gordon invariants of satellite knots~\cite[Theorem 2]{LitherlandCobordism}, but differs in the winding number zero case.
\medbreak
If the representation $\alpha(n,\chi)$ is $\eta$-regular, then it gives rise to representations $\alpha(n,\chi)_P$ on~$\pi_1(M_P)$ and~$\alpha(n,\chi)_K$ on~$\pi_1(M_K)$.
The representation $\alpha(n,\chi)_P$ can be shown to agree with~$\alpha(n,\chi_P)$, where $\chi_P$ is the character induced by $\chi$ on~$H_1(\Sigma_n(P);\Z)$, see~\cite[Section 4]{LitherlandCobordism}.
In order order to state and prove the metabelian satellite formula, we need to better understand $\alpha(n,\chi)_K$.
                                                                                                                  
The composition $H_1(M_K;\Z) \cong H_1(X_K;\Z) \to H_1(M_{P(K,\eta)};\Z) \cong \Z$ is given by multiplication by~$w=\ell k(\eta,P)$.
Thus, the corresponding cover of $M_K$ is disconnected and has $|w|$ components (if~$w=0$, then the covering is trivial, and so we temporarily disregard this case).
Using $t_Q$ (resp.~$t_K$) to denote the generator of the deck transformation group of the infinite cyclic cover of~$M_{P(K,\eta)}$ (resp. $M_K$), we note that~$t_Q^w=t_K$ and consider the following inclusion induced map
\begin{equation}
  \label{eq:InclusionSatelliteAlexanderModule}
  \iota_* \colon \bigoplus_{i=1}^{|w|} t_Q^{i-1} H_1(M_K;\Z[t_K^{\pm 1}]) \cong H_1(M_K;\Z[t_Q^{\pm 1}]) \to H_1(M_{P(K,\eta)};\Z[t_Q^{\pm 1}]).
\end{equation}
In order to obtain maps on the branched covers, we shall quotient both sides of~\eqref{eq:InclusionSatelliteAlexanderModule} by \(t_{Q}^{n}-1\). On the right-hand side, the result is $H_1(\Sigma_n(P(K,\eta));\Z)$ and so we focus on the left-hand side.
Write $h:=\gcd(w,n)$
and observe that \(t_{K}^{n/h}=t_Q^{wn/h}=t_Q^{\operatorname{lcm}(n,w)}=1 \pmod{t_{Q}^{n}-1}\). Thus, the map~$i_*$ of~\eqref{eq:InclusionSatelliteAlexanderModule} descends to a map 
$$ \bigoplus_{i=1}^{h} t_Q^{i-1} H_1(\Sigma_{n/h}(K);\Z) \to H_1(\Sigma_n(P(K,\eta));\Z),$$
where we are thinking of $H_{1}(\Sigma_{n/h}(K);\Z)$ as $H_1(M_K;\Z[t_K^{\pm 1}])/(t_K^{n/h}-1)$. From now on, we fix a copy of $H_1(\Sigma_{n/h}(K);\Z) $ in this direct sum once and for all. Using this copy, we obtain a map
\begin{equation}
  \label{eq:Defin}
  \iota_n \colon H_1(\Sigma_{n/h}(K);\Z) \to H_1(\Sigma_n(P(K,\eta));\Z).
\end{equation}
Since $t_K^{n/h}-1=0$ mod $t_Q^n-1$, the character $\chi$ descends to a character on each $t_Q^{i-1} H_1(\Sigma_{n/h}(K);\Z)$. Thus, for $i=1,\ldots,h$, the character \(\chi\) gives rise to the following characters:
\begin{align*}
  \chi_{i} \colon H_{1}(\Sigma_{n/h}(K);\Z) &\to \Z_m \\
  v &\mapsto \chi(t_{Q}^{i-1} \iota_n(v)).
\end{align*}
We use $\mu_Q$ (resp. $\mu_K$) to denote the meridian of~$M_{P(K,\eta)}$ (resp. $M_K$). Then, just as in Subsection~\ref{sub:Metabelian}, we consider the following composition of canonical projections:
$$q_{Q} \colon \pi_{1}(M_{P(K,\eta)})^{(1)} \to H_{1}(M_{P(K,\eta)};\Z[t_Q^{\pm1}]) \to H_{1}(\Sigma_{n}(P(K,\eta));\Z).$$
Observe that~$\mu_Q^{-w}\eta$ has trivial abelianization and therefore belongs to $\pi_{1}(M_{P(K,\eta)})^{(1)}$.
Finally, given $\omega \in S^1$ and $m \geq 0$, we write $\Bl(J)(\omega t^m)$ for the twisted Blanchfield pairing associated to the representation $\pi_1(M_J) \to GL_1(\LC), \gamma \mapsto \omega t^{m \ell k(\gamma,\mu_J)}$. When $m=1$ and $\omega=1$, this reduces to the usual Blanchfield form $\Bl(J)$.
The main result of this section is the following.

\begin{theorem}\label{thm:metabelian-cabling-formula}
  Let \(K,P\) be two knots in \(S^{3}\), let \(\eta\) be an unknotted curve in the complement of~\(P\) with meridian $\mu_\eta$, let \(w=\lk(\eta,P)\), let \(n>1\) and set \(h = \gcd(n,w)\).
  \begin{itemize}
  \item If \(w\neq 0\), then for any character \(\chi \colon H_1(\Sigma_n(P(K,\eta));\Z) \to \Z_{m}\) of prime power order, the metabelian representation~\(\alpha(n,\chi)\) is \(\eta\)-regular.
    Moreover,
    \begin{enumerate}
    \item if \(w\) is divisible by \(n\), then there exists an isometry of linking forms
      \[\Bl_{\alpha(n,\chi)}(P(K,\eta)) \cong \Bl_{\alpha(n,\chi_P)}(P) \oplus \bigoplus_{i=1}^{n} \Bl(K)(\xi_{m}^{\chi_{i}(q_Q(\mu_Q^{-w}\eta))}t^{w/n});\]
    \item if \(w\) is not divisible by \(n\), then \(\Bl_{\alpha(n,\chi)}(P(K,\eta))\)  is Witt equivalent to
      \[\Bl_{\alpha(n,\chi_{P})} (P)\oplus \bigoplus_{i=1}^{h} \Bl_{\alpha(n/h,\chi_{i})}(K)(\xi_{m}^{\chi_{i}(q_Q(\mu_Q^{-w}\eta))}t^{w/h}).\]
    \end{enumerate}
  \item If $w=0$, then the representation \(\alpha(n,\chi)\) is 
    $\eta$-regular if and only if $\chi_i(q_Q(\mu_K^{-w}\eta)) \neq 0$ for each $i=1,\ldots,n$. In this case, there exists an isometry of linking forms  
    \[\Bl_{\alpha(n,\chi)}(P(K,\eta)) \cong \Bl_{\alpha(n,\chi_P)}(P).\]
  \end{itemize}
\end{theorem}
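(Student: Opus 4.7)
The plan is to apply Theorem~\ref{thm:CablingTheorem} (and Corollary~\ref{cor:cabling-formula-abelian} when possible) after a careful analysis of how the metabelian representation $\alpha(n,\chi)$ of $\pi_1(M_{P(K,\eta)})$ restricts along the two pieces of the JSJ-type decomposition \eqref{eq:DecompoSurgerySatellite}. First I would verify $\eta$-regularity. Since $\mu_\eta$ is identified with the zero-framed longitude of $K$, it is null-homologous in $S^{3}\setminus\mathcal{N}(K)$ and hence in $M_{P(K,\eta)}$, giving $\phi_Q(\mu_\eta)=0$ and $q_Q(\mu_\eta)=0$. Together with \eqref{eq:MapToSemiDirect} this forces $\alpha(n,\chi)(\mu_\eta)=\mathrm{id}$. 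Next, writing $\alpha(n,\chi)(\eta)=J^{w}D$ with $J$ the shift matrix from \eqref{eq:Matrix} and $D$ the diagonal matrix with entries $\xi_m^{\chi_i(q_Q(\mu_Q^{-w}\eta))}$, one computes $\det(\mathrm{id}-J^{w}D)$. When $w\neq 0$ the leading term in $t$ is $\pm t^{w}\prod_i\xi_m^{\chi_i(\cdot)}$, so the determinant is a non-zero Laurent polynomial; when $w=0$, the matrix $J^{0}D=D$ is constant and the determinant factors as $\prod_i(1-\xi_m^{\chi_i(q_Q(\eta))})$, which is non-zero precisely under the stated criterion. Acyclicity of $\alpha(n,\chi)$ follows from the prime-power hypothesis and \cite{FriedlPowellInjectivity} as in Subsection~\ref{sub:Metabelian}.

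Second, I would identify the restricted representations. The restriction to $\pi_1(M_P)$ is $\alpha(n,\chi_P)$ directly from the definition of $\chi_P$ via the inclusion $H_1(L_n(P);\Z)\hookrightarrow H_1(L_n(P(K,\eta));\Z)$. The restriction $\alpha(n,\chi)_K$ is the main technical step. Set $h=\gcd(n,w)$. Under the inclusion $\pi_1(S^{3}\setminus\mathcal{N}(K))\hookrightarrow\pi_1(M_{P(K,\eta)})$, the meridian $\mu_K$ maps to $\eta$, so $\alpha(n,\chi)(\mu_K)=J^{w}D$. The cyclic map $i\mapsto i-w\pmod n$ on the standard basis of $\C^{n}$ partitions it into $h$ orbits of cardinality $n/h$, yielding an $\alpha(n,\chi)_K$-invariant decomposition $\LC^{n}=V_1\oplus\cdots\oplus V_h$. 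On each $V_i\cong\LC^{n/h}$, the operator $J^{w}D$ acts as the $(n/h)$-dimensional shift matrix of the form appearing in \eqref{eq:Matrix} (for rank $n/h$) composed with a diagonal matrix, scaled by $t^{w/h}$ to account for the wrap-around. Using Remark~\ref{rem:InductionFunctor} together with the fact that $t_K^{n/h}=t_Q^{wn/h}\equiv 1\pmod{t_Q^{n}-1}$ (so that $\chi$ does descend to a character $\chi_i$ on $H_1(L_{n/h}(K);\Z)$), I would identify each block with the tensor product $\alpha(n/h,\chi_i)\otimes_{\C}\C_{\xi_m^{\chi_i(q_Q(\mu_Q^{-w}\eta))}\,t^{w/h}}$, where the second factor is the one-dimensional abelian twist substituting $t\mapsto\xi_m^{\chi_i(q_Q(\mu_Q^{-w}\eta))}\,t^{w/h}$ into the $\LC$-coefficients. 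When $n\mid w$ we have $h=n$ and $n/h=1$; since $L_1(K)=S^{3}$ has trivial first homology, the $\chi_i$ are trivial and each block collapses to an abelian character.

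Third, I would assemble the result. For $w\neq 0$, all hypotheses of Theorem~\ref{thm:CablingTheorem} are met, producing a Witt equivalence between $\Bl_{\alpha(n,\chi)}(P(K,\eta))$ and $\Bl_{\alpha(n,\chi_P)}(P)\oplus\Bl_{\alpha(n,\chi)_K}(K)$. Substituting the block decomposition of $\alpha(n,\chi)_K$ gives the Witt equivalence of item~(2). For item~(1), each block of $\alpha(n,\chi)_K$ is abelian, so Corollary~\ref{cor:cabling-formula-abelian} upgrades the Witt equivalence to an isometry and the block form of $\Bl_{\alpha(n,\chi)_K}(K)$ becomes the advertised direct sum of evaluations $\Bl(K)(\xi_m^{\chi_i(q_Q(\mu_Q^{-w}\eta))}t^{w/n})$. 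For $w=0$, the map $\mu_K\mapsto D$ has no $t$-dependence, so the twisted chain complex of $X_K$ with these coefficients reduces to that of a solid torus; acyclicity then follows from the invertibility of $\mathrm{id}-D$ guaranteed by $\eta$-regularity. Thus $\Bl_{\alpha(n,\chi)_K}(K)=0$, and by Corollary~\ref{cor:cabling-formula} the relevant submodule $L$ has order dividing the unit $\det(\mathrm{id}-D)\in\LC$ and is therefore trivial, yielding the stated isometry.

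The main obstacle will be the precise identification of each block in the decomposition of $\alpha(n,\chi)_K$ as $\alpha(n/h,\chi_i)$ twisted by the abelian character $t\mapsto\xi_m^{\chi_i(q_Q(\mu_Q^{-w}\eta))}t^{w/h}$. This demands careful bookkeeping of how standard basis vectors are permuted by $J^{w}$, of the correct normalization of the $t^{w/h}$ factors, and of the compatibility between the characters $\chi_i$ defined via the Alexander-module restriction map $\iota_n$ of \eqref{eq:Defin} and the naïve formula $v\mapsto\chi(t_Q^{i-1}\iota_n(v))$. The hypothesis $h=\gcd(n,w)$ enters exactly at the moment one checks that $\chi$ is well-defined modulo $t_K^{n/h}-1$, which is what guarantees the metabelian representation $\alpha(n/h,\chi_i)$ makes sense as a representation of $\pi_1(M_K)$.
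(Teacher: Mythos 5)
Your high-level architecture matches the paper's: verify $\eta$-regularity, decompose the restricted representation $\alpha(n,\chi)_K$, and then feed the result into Theorem~\ref{thm:CablingTheorem}/Corollary~\ref{cor:cabling-formula-abelian}. Where you differ is in the technical engine. The paper packages the decomposition of $\alpha(n,\chi)_K$ via Mackey's double-coset formula (Theorem~\ref{thm:mackey-thm}, Lemma~\ref{lem:DoubleCosets}, Proposition~\ref{prop:MackeyApplication}), applied to $\gamma_Q(n,\chi)=\ind^G_{H_2}\rho_Q(n,\chi)$ restricted along the subgroup $H_1=\im(\iota_n)$. You instead propose a direct combinatorial decomposition of $\C^n$ into orbits of the shift $i\mapsto i-w\pmod n$. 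This is a legitimate route that in principle reproves the same block structure without citing Mackey, and it has the pedagogical advantage of being elementary; the cost is exactly the bookkeeping you flag, and in particular the identification of the twist $t\mapsto\xi_m^{\chi_i(q_Q(\mu_Q^{-w}\eta))}t^{w/h}$ is precisely what the $\theta,\ell$ parameters in $\gamma_J(n,\chi,\theta,\ell)$ and $\rho_J(n,\chi,\theta,\ell)$ are introduced to track cleanly. You leave this step as an obstacle rather than resolving it, so it remains a gap, albeit one with a clear path to closure.

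Two more concrete problems. First, on $\eta$-regularity: from $\mu_\eta=\lambda_K$ being null-homologous you deduce $\phi_Q(\mu_\eta)=0$, which is fine, but you then assert $q_Q(\mu_\eta)=0$ as if this too followed from null-homology. It does not: $q_Q$ is defined on $\pi_1^{(1)}$ with target $H_1(L_n;\Z)$, and a generic element of $\pi_1^{(1)}$ has non-trivial image. The correct (and short) argument, used in Lemma~\ref{lemma:metabelian-eta-regularity}, is that the zero-framed longitude lies in $\pi_1(S^3\setminus\mathcal{N}(K))^{(2)}$, hence $\mu_\eta\in\pi_1(M_{P(K,\eta)})^{(2)}$, and every metabelian representation kills $\pi_1^{(2)}$.

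Second, and more seriously, your argument for acyclicity in the case $w=0$ is wrong. You claim that because $\alpha(n,\chi)_K(\mu_K)=D$ has no $t$-dependence, ``the twisted chain complex of $X_K$ with these coefficients reduces to that of a solid torus,'' and acyclicity follows from invertibility of $\id-D$. The twisted chain complex of $X_K$ is built from the entire knot group, not just from the meridian, and $H_1(X_K;\C_\zeta)$ for a constant character $\zeta$ is not zero merely because $\zeta\neq 1$: it is governed by the Alexander polynomial, with $\dim_\C H_1(X_K;\C_\zeta)>0$ exactly when $\Delta_K(\zeta)=0$. Invertibility of $\id-D$ only kills $H_0$. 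What actually makes $H_1(M_K;\LC^n_{\alpha(n,\chi)})$ vanish here is that each eigenvalue $\xi_m^{\chi_i(q_Q(\eta))}$ of $D$ is a prime-power root of unity, and $\Delta_K$ has no zeros among prime-power roots of unity (the paper cites the proof of Proposition~3.3 in \cite{FriedlEta}). This is the crucial place where the prime-power hypothesis on $m$ enters in the $w=0$ case, and your sketch omits it entirely.
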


On the level of averaged signatures, Theorem~\ref{thm:metabelian-cabling-formula} is reminiscent of Litherland's description of the behavior of the Casson-Gordon invariants of satellite knots~\cite[Theorem 2]{LitherlandCobordism}.

\begin{remark}
  \label{rem:ComparisonWithLitherland}
  Taking averaged signatures in Theorem~\ref{thm:metabelian-cabling-formula} (and applying item~\ref{item:S4} of Proposition~\ref{prop:SignatureFunctionAlgebra}),
  we see that when the winding number $w$ is non-zero, our metabelian signatures behave in the same way as the Casson-Gordon signatures do~\cite[Theorem 2]{LitherlandCobordism}. On the other hand, when $w=0$,
  the behaviors differ: namely we obtain $\sigma^{\text{av}}_{P(K,\eta),\alpha(n,\chi)}(\omega)=\sigma^{\text{av}}_{P,\alpha(n,\chi_P)}(\omega)$; the Levine-Tristram signatures of the companion knot do not contribute, contrarily do Litherland's
  formula for $\sign^{av}_{\omega} \tau(P(K,\eta),\chi)$. This can also be seen by combining Theorem~\ref{thm:BlanchfieldCG} with~\cite[Corollary~2]{LitherlandCobordism}.
  
We explain why, in the winding number zero case, it should not come as a surprise that Litherland's formula for $\tau(P(K,\eta),\chi)$ has a contribution from the companion knot $K$, while our formula for $\Bl_{\alpha(n,\chi)}(P(K,\eta))$ does not.
Litherland's result~\cite[Corollary~2]{LitherlandCobordism} implies that the difference \(f(\omega) = \sign^{av}_{\omega} \tau(P(K,\eta),\chi) - \sign^{av}_{\omega} \tau(P,\chi_P)\) is a constant function of \(\omega \in S^{1}\).
 In fact, the value of \(f\) is expressible in terms of a value of the Levine-Tristram signature of the companion knot~$K$.
    In particular, this show that $K$ does not contribute any signature jumps to \(\sign_{\omega}^{av}(\tau(P(K,\eta),\chi))\).
 Since the Witt class of $\Bl_{\alpha(n,\chi)}(P(K,\eta))$ precisely captures these signature jumps, this gives another explanation of why a contribution of~\(K\) should not be expected in our formula.
\end{remark}

Notice furthermore that Theorem~\ref{thm:metabelian-cabling-formula} takes a particularly simple form for connected sums.
Indeed, in this case, we have~$w=1$ (so~$h=1$) as well as $\eta=\mu_P$ so that we obtain the following corollary.
\begin{corollary}
  \label{cor:MetabelianConnectedSum}
  Let $K, P$ be two knots. If $\chi \colon H_1(\Sigma_n(K \# P);\Z) \to \Z_{m}$ is a character of prime power order, then  \(\Bl_{\alpha(n,\chi)}(K \# P)\) is Witt equivalent to $\Bl_{\alpha(n,\chi_P)}(P) \oplus \Bl_{\alpha(n,\chi_K)}(K)$.
\end{corollary}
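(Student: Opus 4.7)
The corollary is a direct specialization of Theorem~\ref{thm:metabelian-cabling-formula} to the case in which the satellite operation produces a connected sum. My plan is to realize $K \# P$ as the satellite knot $P(K, \mu_P)$, where the infection curve is taken to be $\eta = \mu_P$, the meridian of the pattern~$P$. This is the standard identification, and with this choice the winding number is $w = \lk(\mu_P, P) = 1$, whence $h = \gcd(n,w) = 1$. I would also briefly verify that $\alpha(n,\chi)$ is $\eta$-regular: the equality $\alpha(n,\chi)(\mu_\eta) = \id$ is automatic, since under the connected sum identification $\mu_\eta$ becomes the $0$-framed longitude of $K$ and hence is nullhomologous in $M_{K\#P}$; and the condition $\det(1 - \alpha(n,\chi)(\eta)) \neq 0$ is standard for prime power order characters as in the set-up of Subsection~\ref{sub:Metabelian}.

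Assuming $n>1$ (the case $n=1$ being vacuous since $H_1(L_1(K\#P);\Z) = 0$), the integer $w=1$ is not divisible by $n$, so case~(2) of Theorem~\ref{thm:metabelian-cabling-formula} applies. With $h = 1$, the direct sum collapses to a single summand, and the twist factor $\xi_m^{\chi_1(q_Q(\mu_Q^{-w}\eta))} t^{w/h}$ simplifies drastically: since $\eta = \mu_P$ is identified with the meridian $\mu_Q$ of $K\#P$, we have $\mu_Q^{-w}\eta = \mu_Q^{-1}\mu_Q = 1$, so $q_Q(\mu_Q^{-w}\eta) = 0$ and the root of unity is trivial, while $t^{w/h} = t$. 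Finally, unwinding the definition of $\chi_1$ from~\eqref{eq:Defin}, the map $\iota_n \colon H_1(L_n(K);\Z) \to H_1(L_n(K\#P);\Z)$ coincides with the natural inclusion induced by the connected sum decomposition $L_n(K \# P) = L_n(K) \# L_n(P)$, hence $\chi_1 = \chi_K$.

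The only potential obstacle is the careful bookkeeping of the identifications in the satellite setup, in particular verifying that the character $\chi_1$ produced by Theorem~\ref{thm:metabelian-cabling-formula} really does coincide with $\chi_K$ under the natural splitting of $H_1(L_n(K\#P);\Z)$. Once this is confirmed, the Witt equivalence $\Bl_{\alpha(n,\chi)}(K \# P) \sim \Bl_{\alpha(n,\chi_P)}(P) \oplus \Bl_{\alpha(n,\chi_K)}(K)$ is immediate from the cabling theorem; no further computation is required.
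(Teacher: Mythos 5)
Your proposal is correct and takes the same route as the paper: the corollary is simply Theorem~\ref{thm:metabelian-cabling-formula} specialized to $\eta = \mu_P$, giving $w = 1$, $h = 1$, and a trivial twist factor. One small imprecision: to justify $\alpha(n,\chi)(\mu_\eta) = \id$ you appeal to $\mu_\eta$ being nullhomologous, but since $\alpha(n,\chi)$ is metabelian (and factors through $\pi_1/\pi_1^{(2)}$) this is not enough; what is actually needed, and what Lemma~\ref{lemma:metabelian-eta-regularity} uses, is that $\mu_\eta$ is identified with the $0$-framed longitude of $K$, which lies in $\pi_1(S^3 \setminus \mathcal{N}(K))^{(2)}$ (not merely $\pi_1^{(1)}$) because it bounds a Seifert surface whose fundamental group maps into the commutator subgroup.
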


The remainder of this subsection is devoted to the proof of Theorem~\ref{thm:metabelian-cabling-formula}. We start by proving the $\eta$-regularity of $\alpha(n,\chi)$.

\begin{lemma}
  \label{lemma:metabelian-eta-regularity}
  Let $K,P$ be knots and let $\eta \subset S^3 \setminus P \subset M_P$ be a simple closed curve.
  Choose a character \(\chi \colon H_1(\Sigma_n(P(K,\eta));\Z) \to \Z_{m} \) of finite order and let \(w = \lk(\eta,P)\).
  \begin{enumerate}
  \item If \(w \neq 0\), then \(\alpha(n,\chi)\) is \(\eta\)-regular.
  \item  For \(w=0\),  \(\alpha(n,\chi)\) is \(\eta\)-regular if and only if $\chi_i(q_Q(\mu_K^{-w}\eta)) \neq 0$ for each $i=1,\ldots,n$.
  \end{enumerate}
\end{lemma}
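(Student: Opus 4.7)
The plan is to verify the two defining properties of $\eta$-regularity—namely \(\alpha(n,\chi)(\mu_\eta) = \id\) and \(\det(\id - \alpha(n,\chi)(\eta)) \neq 0\) in \(\LC\)—in two separate steps. The first condition holds uniformly in $w$, and I will establish it at the outset. Under the satellite identification, $\mu_\eta$ corresponds to the zero-framed longitude $\lambda_K \subset S^3 \setminus \mathcal{N}(K) \subset M_{P(K,\eta)}$. A Seifert surface $F_K$ for $K$ satisfies $\pi_1(F_K) \subset \pi_1(S^3 \setminus \mathcal{N}(K))^{(1)}$, and the composition $\pi_1(S^3 \setminus \mathcal{N}(K)) \hookrightarrow \pi_1(M_{P(K,\eta)}) \xrightarrow{\phi_{P(K,\eta)}} \Z$ equals $w\cdot \phi_K$ and hence vanishes on commutators. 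Therefore $F_K$ lifts to the infinite cyclic cover of $M_{P(K,\eta)}$, so $\mu_\eta = \lambda_K$ bounds in that cover, and consequently $q_Q(\mu_\eta) = 0$ in $H_1(L_n(P(K,\eta));\Z)$. Combined with $\phi_{P(K,\eta)}(\mu_\eta) = 0$, formula~\eqref{eq:Matrix} then gives $\alpha(n,\chi)(\mu_\eta) = \id$ in both cases (1) and (2).

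For the second condition, set $v := q_Q(\mu_Q^{-w}\eta)$. From~\eqref{eq:MapToSemiDirect} and~\eqref{eq:Matrix} one reads off
\[\alpha(n,\chi)(\eta) \;=\; S^w D,\]
where $S$ is the cyclic shift matrix (\(S e_i = e_{i-1}\) for $i>1$ and $S e_1 = t e_n$) and $D = \mathrm{diag}(\xi_m^{\chi_1(v)},\ldots,\xi_m^{\chi_n(v)})$ with $\chi_i(v) := \chi(t_Q^{i-1}v)$. The key observation is that $S^w D$ is a monomial matrix, so $\det(\id - S^w D)$ factors along the orbits $O_1,\ldots,O_h$ of the underlying permutation $\sigma \colon i \mapsto i - w \pmod{n}$; this permutation has exactly $h = \gcd(w,n)$ orbits, each of length $n/h$. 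I will reorder the basis so that $S^w D$ becomes block-diagonal with one block per orbit, and then invoke the elementary identity that a cyclic weighted-permutation block with total weight $c$ satisfies $\det(\id - M_{\mathrm{block}}) = 1 - c$.

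To pin down the total weight along an orbit $O_j$, note that going once around it amounts to applying $S^{wn/h} = t^{w/h}\cdot \id$ (using $S^n = t\cdot \id$), which contributes $t^{w/h}$ from $S$, while the $D$-contribution is a root of unity $\xi_m^{a_j}$ with $a_j \equiv \sum_{k \in O_j} \chi_k(v) \pmod{m}$. Assembling these pieces yields
\[\det\bigl(\id - \alpha(n,\chi)(\eta)\bigr) \;=\; \prod_{j=1}^{h}\bigl(1 - t^{w/h}\xi_m^{a_j}\bigr) \ \in \ \LC.\]
When $w \neq 0$, the exponent $w/h$ is a nonzero integer, so each factor is a nonzero Laurent polynomial in $t$ and the product is nonzero; this proves~(1). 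When $w = 0$, one has $h = n$ and every orbit is a singleton, so $w/h = 0$ and the product collapses to $\prod_{j=1}^{n}\bigl(1 - \xi_m^{\chi_j(q_Q(\eta))}\bigr)$, which is nonzero if and only if $\chi_j(q_Q(\eta)) \not\equiv 0 \pmod{m}$ for every $j$; since $\mu_K^{-w} = 1$ when $w = 0$, this matches the condition in~(2). The main potential pitfall is the bookkeeping in the cycle decomposition of $\det(\id - S^w D)$, but a careful orbit-by-orbit reordering of the basis—together with a small sanity check on a low-dimensional example such as $n=4$, $w=2$—makes the pattern transparent.
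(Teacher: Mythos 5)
Your proposal is correct and follows essentially the same strategy as the paper's proof: you establish \(\alpha(n,\chi)(\mu_\eta)=\id\) by showing the zero-framed longitude \(\lambda_K\) dies in the metabelian quotient (the paper quotes \(\lambda_K\in\pi_1(S^3\setminus\mathcal{N}(K))^{(2)}\) directly, which is exactly what your Seifert-surface-lift argument establishes), and you compute \(\det(\id-\alpha(n,\chi)(\eta))\) via the orbit decomposition of the monomial matrix \(S^wD\), which is the content of the paper's Claim (reduction to \(h=1\) by a permutation conjugation followed by row reduction). Your orbit-by-orbit cycle-weight formula \(\prod_{j=1}^{h}\bigl(1-t^{w/h}\xi_m^{a_j}\bigr)\) with \(a_j=\sum_{k\in O_j}\chi_k(v)\) is correct and in fact slightly cleaner bookkeeping than the paper's notation; the case analysis \(w\ne 0\) versus \(w=0\) then matches the lemma exactly.
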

\begin{proof}
  Notice that in $M_{P(K,\eta)}$, the meridian \(\mu_{\eta}\) of $\eta$ is identified with the zero-framed longitude of \(K\). In particular, since the longitude of~\(K\) is contained in \(\pi_{1}(S^{3} \setminus \mathcal{N}(K))^{(2)}\), it follows that~\(\mu_{\eta} \in \pi_{1}(M_{P(K,\eta)})^{(2)}\).
  Therefore, since the representation $\alpha(n,\chi)$ is metabelian, we deduce that \(\alpha(n,\chi)(\mu_{\eta}) =~1\).

  It remains to show that \(\det\left(\alpha(n,\chi)(\eta) - \id\right) \neq 0\). For \(a_{1},\ldots,a_{n}$ in~$\LC\), define \(\operatorname{diag}(a_{1},\ldots,a_{n})\) to be the diagonal matrix with the $a_i$ on its main diagonal. The second assertion will quickly follow from the following observation:
  \begin{claim*}
    Set \(D = \operatorname{diag}(a_{1},a_{2},\ldots,a_{n})\) and~$A_n(t)= \bsm
    0& 1 & \cdots &0 \\
    \vdots & \vdots & \ddots & \vdots  \\
    0 & 0 & \cdots & 1 \\
    t & 0 & \cdots & 0
    \esm$. For \(k\in \Z\) and \(h = \gcd(n,k)\), the following equality holds: 
    \[\det \left(A_{n}(t)^{k} D-\id\right) =
      \begin{cases}
        (a_1-1)\cdots(a_n-1)
        \quad  &\text{if } k = 0, \\
        \pm \prod_{i=1}^{h}(1-t^{k/h} a_{i}  a_{i+h} \cdots  a_{i+k-h})
        \quad &\text{if } k \neq 0.
      \end{cases}
    \]
  \end{claim*}
  \begin{proof}
    If $k = 0$, then the claim is clear. We therefore assume that $k \neq 0$.
    If \(h\neq 1\), then a computation using the definition of $A_n(t)$ implies that there is a permutation matrix \(B_{n,k}\)~such~that
    \[B_{n,k} \left(A_{n}(t)^{k}  D-\id\right) B_{n,k}^{-1} = \bigoplus_{i=1}^{h} \left(A_{n/h}(t)^{k/h}  \operatorname{diag}(a_{i},a_{i+h},\ldots)-\id\right).\]
    We can therefore restrict our attention to the case \(h=1\).
    In this case, elementary row operations can be used to kill the entries below the diagonal.
    Eventually, we end up with a matrix with zeros below the main diagonal and the following entries on the main diagonal:
    \[\underbrace{-1,-1,\ldots,-1}_{n-1}, -1 + a_{1}  a_{2} \cdots a_{n} t^{k}.\]
    Thus, the determinant is equal to \((-1)^{n-1}(a_{1}  a_{2} \cdots a_{n}t^{k}-1)\), concluding the proof of the claim.
  \end{proof}
  Use $\phi \colon \pi_1(M_{P(K,\eta)}) \to \Z$ to denote the abelianization map. We will apply the claim to~$k=\phi(\eta)=w$ and $a_i=\chi_i(q_Q(\mu_Q^{-w}\eta))$. If $w \neq 0$, then the claim implies that \(\det\left(\alpha(n,\chi)(\eta) - \id\right) \neq 0\), proving the first assertion. If $w=0$, then the claim implies that \(\det\left(\alpha(n,\chi)(\eta) - \id\right) \neq 0\) if and only if none of the $a_i$ is equal to $1$. This concludes the proof of the second assertion and therefore the proof of the lemma.
\end{proof}

We now suppose that $\alpha(n,\chi)$ is $\eta$-regular (this is automatic for $w \neq 0$). Thus, $\alpha(n,\chi)$ restricts to representations $\alpha(n,\chi)_K$ and $\alpha(n,\chi)_P=\alpha(n,\chi_P)$ on the fundamental groups of $M_P$ and~$M_K$.
Before applying Theorem~\ref{thm:CablingTheorem} to obtain a decomposition of~$\operatorname{Bl}_{\alpha(n,\chi)}(M_{P(K,\eta)})$, we start by studying~$\alpha(n,\chi)_K$. The next remark leads to the main idea in this process.

\begin{remark}
  \label{rem:MackeyMotivation}
  Set $G:=\Z \ltimes H_1(\Sigma_n(P(K,\eta));\Z)$. Recall from Subsection~\ref{sub:Metabelian} that the metabelian representation $\alpha(n,\chi) \colon \pi_1(M_{P(K,\eta)}) \to GL_n(\LC)$ is defined as the composition of the map $\widetilde{\rho}_{Q} \colon \pi_1(M_{P(K,\eta)}) \to~G$ of~\eqref{eq:MapToSemiDirect} with the representation $\gamma_Q(n,\chi) \colon G \to GL_n(\LC)$ described in~\eqref{eq:Matrix}. Recalling~the definition of $\iota_*$ from \eqref{eq:InclusionSatelliteAlexanderModule} as well as the definition of $\iota_n$ from~\eqref{eq:Defin}, the inclusion map $ S^3 \setminus \mathcal{N}(K) \to~M_{P(K,\eta)}$ gives rise to the following commutative diagram:
  \begin{equation}
    \label{eq:DiagramForCover}
    \xymatrix@R0.5cm{
      \pi_1(S^3\setminus \mathcal{N}(K)) \ar[r]^{\iota}\ar[d] \ar@/_7pc/[ddd]_{\widetilde{\rho}_{K,Q}}& \pi_1(M_{P(K,\eta)}) \ar[d]  \ar@/^5pc/[dd]^{\widetilde{\rho}_Q} \\
      \Z \ltimes H_1(M_K;\Z[t_K^{\pm 1}]) \ar[r]^{}\ar[d]& \Z \ltimes H_1(M_{P(K,\eta)};\Z[t_Q^{\pm 1}]) \ar[d] \\
      \Z \ltimes \bigoplus_{i=1}^{h} t_Q^{i-1}H_1(\Sigma_{n/h}(K);\Z)\ar[r]^{} 
      \ar@/^0.5pc/[d]^{\operatorname{proj}_0}& \Z \ltimes H_1(\Sigma_{n}(Q);\Z)  \\
      \Z \ltimes H_1(\Sigma_{n/h}(K);\Z),\ar[ru]_{\iota_n}  \ar@/^0.5pc/[u]^{\operatorname{incl}_0}&
    }
  \end{equation}
  where $\operatorname{proj}_{0}(t_{Q}^{\ell}, (x_{1}, t_{Q}x_{2},\ldots,t_{Q}^{h-1}x_{h})) = (t_{Q}^{\ell},x_{1})$ and $ \operatorname{incl}_{0}(t_{Q}^{\ell}, x) = (t_{Q}^{\ell}, (x, 0, \ldots, 0)).$
  Since 
  \[\alpha(n,\chi)_K=\alpha(n,\chi) \circ \iota=\gamma_Q(n,\chi) \circ \widetilde{\rho}_Q \circ \iota =\gamma_Q(n,\chi) \circ \iota_n \circ \widetilde{\rho}_{K,Q},\] 
  we must study the restriction of $\gamma_Q(n,\chi)$ to $H_1:=\im(\iota_n)$. We shall denote this restriction by $\res^{G}_{H_1} \gamma_Q(n,\chi)$. Next, set $H_2:=n\Z \times H_1(\Sigma_n(Q);\Z)$ and recall from Remark~\ref{rem:InductionFunctor} that $\gamma_Q(n,\chi)$ is isomorphic to the induced representation $\ind^{G}_{H} \rho_Q(n,\chi).$ Thus, in order to understand $\alpha(n,\chi)_K$, we must study
  $$\res^{G}_{H_1} \ind^{G}_{H_2} \rho_Q(n,\chi).$$
\end{remark}

Remark~\ref{rem:MackeyMotivation} calls for an application of Mackey's induction formula. In order to recall this result, we start with a group $G$ and two subgroups \(H_{1},H_{2} \subset G\).
For any \(a \in G\), there is a \emph{double \((H_{1},H_{2})\)-coset} \(H_{1} a H_{2} := \{h_{1}  a  h_{2} \mid h_{1} \in H_{1}, h_{2} \in H_{2}\} \subset G\).
It is easy to see that \(H_{1} a H_{2} = H_{1} b H_{2}\) if and only if there are \(h_{1} \in H_{1}\) and \(h_{2} \in H_{2}\) such that \(a = h_{1}  b  h_{2}\).
Denote the set of double \((H_{1},H_{2})\)-cosets by  \(H_{1} \backslash G / H_{2}\).
If \(a \in G\) and \(\rho\) is a representation of \(H_{2}\), then we will denote by \({}^{a}\rho\) the representation of \(aH_{2}a^{-1}\) given by the formula ${}^{a}\rho(x)= \rho(a^{-1}xa).$

With these notations, Mackey's formula reads as follows; see \cite[Theorem 10.13]{CurtisReiner}.

\begin{theorem}\label{thm:mackey-thm}
  Let $G$ be a group and let \(H_{1},H_{2} \subset G\) be finite index subgroups. If \(\rho\) is a representation of \(H_{2}\), then we have
  \[\res^{G}_{H_{1}} \ind^{G}_{H_{2}}(\rho) = \bigoplus_{H_{1} a H_{2}} \ind^{H_{1}}_{aH_{2}a^{-1} \cap H_{1}} \res^{aH_{2}a^{-1}}_{aH_{2}a^{-1} \cap H_{1}} ({}^{a} \rho),\]
  where the sum is taken over all \(H_{1} a H_{2} \in H_{1} \backslash G / H_{2}\).
\end{theorem}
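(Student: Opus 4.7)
The plan is to prove Mackey's formula by translating induction and restriction into tensor-product constructions over the relevant group algebras, then decomposing the group algebra of $G$ into a direct sum of $(\C[H_1],\C[H_2])$-bimodules indexed by double cosets.

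First, I would set up notation. Let $V_\rho$ denote the representation space of $\rho$, and realize the induced module as $\ind^G_{H_2}(\rho) = \C[G] \otimes_{\C[H_2]} V_\rho$, equipped with the left $G$-action $g\cdot(x\otimes v)=(gx)\otimes v$. Then $\res^G_{H_1} \ind^G_{H_2}(\rho)$ is the same underlying complex vector space, now considered only as a left $\C[H_1]$-module via $\C[H_1]\hookrightarrow \C[G]$. The entire proof will consist in decomposing the bimodule $\C[G]$ into a direct sum of simpler $(\C[H_1],\C[H_2])$-bimodules and recognizing each piece as the induction of a restriction of a conjugate.

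Next, I would use the double-coset decomposition $G = \bigsqcup_{H_1 a H_2} H_1 a H_2$ (where the index ranges over a set of representatives of $H_1\backslash G/H_2$) to obtain a direct sum decomposition
\[\C[G] = \bigoplus_{H_1 a H_2} \C[H_1 a H_2]\]
of $(\C[H_1],\C[H_2])$-bimodules. Tensoring on the right by $V_\rho$ over $\C[H_2]$ commutes with this direct sum, so it remains to identify each summand
\[\C[H_1 a H_2] \otimes_{\C[H_2]} V_\rho\]
as a left $\C[H_1]$-module with $\ind^{H_1}_{aH_2a^{-1}\cap H_1}\res^{aH_2a^{-1}}_{aH_2a^{-1}\cap H_1}({}^a\rho)$.

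For the key identification, I would fix a double coset $H_1 a H_2$ and consider the map
\[\C[H_1] \otimes_{\C[aH_2a^{-1}\cap H_1]} V_\rho \longrightarrow \C[H_1 a H_2] \otimes_{\C[H_2]} V_\rho, \qquad h\otimes v \longmapsto (ha)\otimes v,\]
where on the left the action of $aH_2a^{-1}\cap H_1$ on $V_\rho$ is through ${}^a\rho$, i.e.\ $x\cdot v = \rho(a^{-1}xa)v$. A direct check using the fact that every element of $H_1aH_2$ can be written in the form $ha h'$ with $h\in H_1$ and $h'\in H_2$, unique up to the simultaneous replacement $h\mapsto hx$, $h'\mapsto (a^{-1}xa)^{-1}h'$ for $x\in aH_2a^{-1}\cap H_1$, shows that this map is a well-defined $\C[H_1]$-linear isomorphism. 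Summing over double cosets then yields the formula in the theorem.

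The main obstacle is a bookkeeping one: one has to carefully track whether the conjugation action appears as ${}^a\rho$ or ${}^{a^{-1}}\rho$, and one must verify that the indexing set $H_1\backslash G/H_2$ rather than $H_2\backslash G/H_1$ appears on the right-hand side with the correct convention. Once the bimodule decomposition of $\C[G]$ is in hand, the rest is formal manipulation of tensor products, and since the statement is classical (see \cite[Theorem 10.13]{CurtisReiner}) the authors likely cite rather than reprove it; nothing in the proof uses the specific topological setup of Section~\ref{sub:SatelliteMetabelian}.
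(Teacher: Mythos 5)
Your guess at the end is right: the paper does not prove this theorem. It states it with the citation ``see \cite[Theorem 10.13]{CurtisReiner}'' and moves on, so there is no ``paper's own proof'' to compare against. Your sketch is a correct version of the standard bimodule argument: realize $\ind^G_{H_2}(\rho)$ as $\C[G]\otimes_{\C[H_2]}V_\rho$, decompose $\C[G]=\bigoplus_{H_1aH_2}\C[H_1aH_2]$ as $(\C[H_1],\C[H_2])$-bimodules, and identify each summand $\C[H_1aH_2]\otimes_{\C[H_2]}V_\rho$ with $\C[H_1]\otimes_{\C[aH_2a^{-1}\cap H_1]}V_{{}^a\rho}$ via $h\otimes v\mapsto ha\otimes v$. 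Well-definedness is exactly the check that for $x\in aH_2a^{-1}\cap H_1$ one has $hxa\otimes v=(ha)(a^{-1}xa)\otimes v=ha\otimes\rho(a^{-1}xa)v$, and bijectivity follows from the uniqueness of the factorization $h_1ah_2$ up to the twist you describe (equivalently, from the disjoint-union $H_1aH_2=\bigsqcup_i h_iaH_2$ over coset representatives $h_i$ of $H_1/(aH_2a^{-1}\cap H_1)$). The finite-index hypothesis is only used to keep inductions finitely generated and the double-coset sum finite; the formal bimodule manipulation needs none of it. In short, your proposal is correct and is the same proof one would find in Curtis--Reiner; the paper simply chose to cite rather than reproduce it.
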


We will now apply Mackey's formula to the group \(G = \Z \ltimes H_{1}(\Sigma_{n}(P(K,\eta));\Z)\) and to the subgroups  $H_2 = n\Z \times H_{1}(\Sigma_{n}(P(K,\eta));\Z)$ and $H_1 = \operatorname{Im}(\iota_{n}).$ In order to carry this out, we need slight generalizations of our metabelian representations. Let $J$ be an arbitrary knot. Given a complex number \(\theta \in S^{1}\) and an integer~\(\ell~\in~\Z\), we consider the representation
\begin{align}
  \label{eq:representation-beta}
  \gamma_J(n,\chi,\theta,\ell) \colon  &\Z \ltimes H_{1}(\Sigma_{n}(K);\Z) \to GL_n(\LC)  \nonumber \\
                            &(t_Q^{a},x)  \mapsto 
                              \begin{pmatrix}
                                0              & 1      & 0      & \cdots & 0 \\
                                0              & 0      & 1      & \cdots & 0 \\
                                \vdots         & \vdots & \vdots &        & \vdots \\
                                0              & 0      & 0      & \cdots & 1 \\
                                \theta t^{\ell} & 0      & 0      & \cdots & 0
                              \end{pmatrix}^{a}
                                                                     \begin{pmatrix}
                                                                       \xi_{m}^{\chi(x)} & 0                          & 0     & \cdots & 0 \\
                                                                       0               & \xi_{m}^{\chi(t_J \cdot x)} & 0      & \cdots & 0 \\
                                                                       \vdots          & \vdots                     & \vdots &        & \vdots \\
                                                                       0               & 0                          & 0      & 0      & \xi_{m}^{\chi(t_J^{n-1}\cdot x)} 
                                                                     \end{pmatrix}.
\end{align}
By definition of this representation, we have the equality $\gamma_J(n,\chi,1,1) = \gamma_J(n,\chi)$. Next, we generalize the representation $\rho_J(n,\chi)$ by considering the representation
\begin{align}
  \rho_J(n,\chi,\theta,\ell) \colon  n\Z \times H_1(\Sigma_n(J);\Z) &\to \LC \\
  (t_J^{nk},x) &\mapsto  (\theta t^{\ell})^{k} \cdot \xi_{m}^{\chi(x)}.    \nonumber
\end{align}
Just as for the representation $\gamma_J(n,\chi,\theta,\ell)$, we observe that $\rho_J(n,\chi,1,1)=\rho_J(n,\chi)$. Furthermore, generalizing Remark~\ref{rem:InductionFunctor}, we also have the induction formula 
$$\ind_H^G \rho_J(n,\chi,\theta,\ell)=\gamma_J(n,\chi,\theta,\ell).$$ 
The following lemma gathers some results for our application of Mackey's formula. 


\begin{lemma}
  \label{lem:DoubleCosets}
  The following assertions hold:
  \begin{enumerate}
  \item Every double \((H_1,H_2)\)-coset is equal to~\(H_1(t_{Q}^{i},0)H_2\), for some \(1 \leq i \leq h\).
  \item $H_2$ is a normal subgroup of $G$.
  \item For any $(t_Q^k,x) \in G$ and any  \(1 \leq i \leq h\), we have $${}^{t_{Q}^{i}} \rho_{Q}(n,\chi,\theta,\ell)(t_Q^k,x) = \rho_{Q}(n,\chi_{i},\theta,\ell)(t_Q^k,x).$$
  \item For any $(t_K^{n/h},x) \in H_1 \cap H_2$, we have 
    $$\res_{H_2 \cap H_1}^{H_2} \rho_{Q}(n,\chi,\theta,\ell)(t_K^{n/h},x) = \rho_{K}(n/h,\chi,\theta^{w/h} \xi_{m}^{\chi(q_Q(\mu_K^{-w}\eta))},\ell w/h)(t_K^{n/h},x).$$ 
  \end{enumerate}
\end{lemma}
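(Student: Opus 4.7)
The plan is to establish the four assertions in order, deriving (1) from (2).

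For (2), I would observe that $H_2$ is the kernel of the surjective homomorphism $G \to \Z/n\Z$ obtained by composing the projection $\pi \colon G \to \Z$, $(t_Q^k, x) \mapsto k$, with reduction modulo $n$. The semidirect structure on the subgroup $n\Z \ltimes H_1(L_n(P(K,\eta));\Z)$ degenerates to a direct product here because $t_Q^n$ acts trivially on $H_1(L_n(P(K,\eta));\Z) = H_1(M_{P(K,\eta)};\Z[t_Q^{\pm 1}])/(t_Q^n - 1)$ by definition of the branched cover, so this direct product presentation coincides with $H_2$, which is then normal as the kernel of a group homomorphism.

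With normality in hand, part (1) reduces to computing $G/(H_1 H_2)$. Any element of $H_1$ has the form $\iota_n(t_K^a, v)$ with first coordinate $t_Q^{aw}$, so $\pi(H_1) = w\Z$; combined with $\pi(H_2) = n\Z$ and $\ker\pi \subseteq H_2 \subseteq H_1 H_2$, this gives $H_1 H_2 = \pi^{-1}(h\Z)$ and hence $G/(H_1 H_2) \cong \Z/h\Z$, with coset representatives $(t_Q^i, 0)$ for $i = 1, \ldots, h$.

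Assertion (3) is a direct semidirect product computation. Using the multiplication law $(t_Q^a, u)(t_Q^b, v) = (t_Q^{a+b}, t_Q^{-b}u + v)$, one finds $(t_Q^i, 0)^{-1}(t_Q^k, x)(t_Q^i, 0) = (t_Q^k, t_Q^{-i}x)$. For $(t_Q^k, x) \in H_2$, applying $\rho_Q(n, \chi, \theta, \ell)$ to this conjugate yields $(\theta t^\ell)^{k/n}\xi_m^{\chi(t_Q^{-i}x)}$, which matches $\rho_Q(n, \chi_i, \theta, \ell)(t_Q^k, x)$ once $\chi_i$ is extended from $H_1(L_{n/h}(K);\Z)$ to $H_1(L_n(P(K,\eta));\Z)$ by the same twist.

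Finally, for (4), the technical core is an iterative computation of $\iota_n$ on powers of $t_K$. Because the commutative diagram in Remark~\ref{rem:MackeyMotivation} forces $\iota_n(t_K, 0) = (t_Q^w, q_Q(\mu_Q^{-w}\mu_K))$, the semidirect multiplication law yields inductively
$$\iota_n(t_K^{n/h}, v) = \Big(t_Q^{nw/h},\; \textstyle\sum_{j=0}^{n/h-1} t_Q^{-jw}\cdot q_Q(\mu_Q^{-w}\mu_K) + \iota_n(v)\Big).$$
Since $nw/h \in n\Z$, this element lies in $H_2$; evaluating $\rho_Q(n, \chi, \theta, \ell)$ contributes the factor $(\theta t^\ell)^{w/h} = \theta^{w/h}t^{\ell w/h}$ from the first coordinate, together with a $\xi_m$-character value from the second. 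Matching the resulting expression with $\rho_K(n/h, \chi, \theta^{w/h}\xi_m^{\chi(q_Q(\mu_K^{-w}\eta))}, \ell w/h)(t_K^{n/h}, v)$ reduces to identifying the character value coming from the sum with the prescribed $\chi(q_Q(\mu_K^{-w}\eta))$. This identification, which is the main obstacle, rests on the cocycle identity $q_Q(\mu_Q^{-aw}\mu_K^a) = \sum_{j=0}^{a-1} t_Q^{-jw}\cdot q_Q(\mu_Q^{-w}\mu_K)$ (derived from $q_Q(ghg^{-1}) = t_Q^{\phi_Q(g)}q_Q(h)$) and on the satellite identification of $\mu_K$ with a longitude of $\eta$ in $M_{P(K,\eta)}$; once in place, parts (1)--(3) amount to bookkeeping.
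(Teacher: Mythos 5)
Your treatment of parts (1) and (2) is correct and takes a genuinely cleaner route than the paper's. The paper proves (1) by writing an explicit three-fold product realizing each $(t_Q^d,x)$ inside $H_1(t_Q^r,0)H_2$, whereas you prove (2) first (observing $H_2 = \ker(G \to \Z \to \Z/n\Z)$, essentially the same trivial observation as in the paper) and then use normality of $H_2$ to convert double $(H_1,H_2)$-cosets into cosets of the subgroup $H_1H_2$; computing $\pi(H_1)=w\Z$, $\pi(H_2)=n\Z$ and using $\ker\pi\subseteq H_2$ gives $H_1H_2 = \pi^{-1}(h\Z)$, hence $h$ double cosets with the stated representatives. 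This buys you a conceptual argument free of the paper's bookkeeping with $\nu_p(t_Q)$. Part (3) is the same conjugation computation as in the paper (with the same mild index/sign ambiguity in the convention for $\chi_i$ that the paper also glosses over).

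Part (4) is where you have a genuine gap. Your computation $\iota_n(t_K^{n/h},v) = \bigl(t_Q^{wn/h},\ \sum_{j=0}^{n/h-1}t_Q^{-jw}\,q + \iota_n(v)\bigr)$, with $q = q_Q(\mu_Q^{-w}\eta)$, is correct and is forced by $\iota_n$ being a homomorphism. But applying $\rho_Q(n,\chi,\theta,\ell)$ to this produces the factor $\xi_m^{\chi\left(\sum_{j=0}^{n/h-1}t_Q^{-jw}q\right)}$, while the claimed right-hand side has $\xi_m^{\chi(q)}$. You acknowledge this as "the main obstacle" and say the identification "rests on the cocycle identity and the satellite identification," but you never actually produce the argument: the cocycle identity you quote only rewrites the sum as $q_Q(\mu_Q^{-(n/h)w}\mu_K^{n/h})$, it does not equate its $\chi$-value with $\chi(q_Q(\mu_Q^{-w}\eta))$, and the "satellite identification" is not spelled out. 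As it stands the step $\chi\bigl(\sum_j t_Q^{-jw}q\bigr) = \chi(q)$ is unproven, and without it the asserted formula for the restricted representation does not follow. (For what it is worth, the paper's own proof silently writes $\iota_n(t_K^{n/h},0) = (t_Q^{wn/h}, q)$, dropping the $\sum_j t_Q^{-jw}$ twist altogether; this is incompatible with $\iota_n(t_K^{n/h},0)=\iota_n(t_K,0)^{n/h}$ unless $n/h=1$, so you have correctly located a genuine pressure point in the argument rather than merely failing to reproduce it. But locating the difficulty is not the same as resolving it, and your proposal does not resolve it.)
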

\begin{proof}
  Given \(g = (t_{Q}^d,x) \) in $G$, we study the $(H_1,H_2)$-coset $H_1gH_2$. We write $h=\operatorname{gcd}(n,w)$ as \(h=an + bw \) as well as \(d = hs + r\), for some \(0 \leq r \leq h-1\). Furthermore, for \(x \in H_{1}(\Sigma_{n}(P(K,\eta));\Z)\), we set \(\nu_{p}(t_Q) = t_{Q}^{-p+1}+t_{Q}^{-p+2}+\cdots+t_{Q}^{-1}+1\). A computation now shows that
  \[(t_{Q}^{d},x) = (t_{Q}^{w},q_Q(\mu_K^{-w}\eta))^{bs} (t_{Q}^{r},0) (t_{Q}^{asn}, x-t_Q^{-r-asn}\nu_{wbs}(t_Q)q_Q(\mu_K^{-w}\eta)).\]
  To show that the right-hand side belongs to~\(H_1(t_{Q}^r,0)H_2\), we only need to prove that $(t_{Q}^{w},q_Q(\mu_K^{-w}\eta))$ belongs to $H_1=\im(i_n)$. This follows from the diagram in~\eqref{eq:DiagramForCover} which implies that
  $$ (t_{Q}^{w},q_Q(\mu_K^{-w}\eta))=\widetilde{\rho}_Q(\mu_K^{w} (\mu_K^{-w}\eta)=\widetilde{\rho}_Q(\iota (\mu_K))=\iota_n(\widetilde{\rho}_{K,Q}(\mu_K))=\iota_n(t_K,0). $$
  The second assertion is a consequence of the definition of the group law on $G$. We now prove the third assertion. Given $(t_Q^k,x) \in G$ and $0 \leq i \leq h-1$, we apply successively the definition of $ {}^{a} \rho$, the group law in $G$ and the definition of $\chi_i$ to obtain the desired equality:
  \begin{align*}
    {}^{t_{Q}^{i}} \rho_{Q}(n,\chi,\theta,\ell)(t_{Q}^{k},x)
    &= \rho_{Q}(n,\chi,\theta,\ell)((t_{Q}^{-i},0) (t_{Q}^{k},x)(t_{Q}^{i},0))  \\
    &= \rho_{Q}(n,\chi,\theta,\ell)(t_{Q}^{k},t_{Q}^{-i}\cdot x) \\
    &=\rho_{Q}(n,\chi_{i},\theta,\ell)(t_{Q}^{k},x) .
  \end{align*}
  To prove the fourth assertion, we first note that $H_2 \cap H_1=\im \left(\iota_{n}|_{(n/h)\Z \times H_{1}(\Sigma_{n/h}(K);\Z)}\right)$. Next, using the relation \(\iota_{n}(t_{K},0) = (t_{Q}^{w},q_Q(\mu_K^{-w}\eta))\) and the definition of $\rho(n,\chi,\theta,\ell)$, we obtain
  \begin{align*}
    \left(\res_{H_2 \cap H_1}^{H_2} \rho_{Q}(n,\chi,\theta,w)\right)(t_{K}^{n/h},0) 
    &= \rho_{Q}(n,\chi,\theta,\ell)(t_{Q}^{wn/h},q_Q(\mu_K^{-w}\eta))  \\
    &= (\theta t^{\ell})^{w/h} \xi_{m}^{\chi(q_Q(\mu_K^{-w}\eta))} \\
    &=\rho_{K}(n/h,\chi,\theta^{w/h} \xi_{m}^{\chi(q_Q(\mu_K^{-w}\eta))},\ell w/h))(t_{K}^{n/h},0) .
  \end{align*}
  To get the equality for arbitrary elements $(t_{K}^{n/h},x) \in H_1 \cap H_2$, note that the value of both homomorphisms on $(1,x)$ is $\xi_m^{\chi(\iota_n(x))}.$ This concludes the proof of the lemma. 
\end{proof}

Lemma~\ref{lem:DoubleCosets} is now used to apply Mackey's theorem to our setting.

\begin{proposition}
  \label{prop:MackeyApplication}
  Use $\gamma_Q(n,\chi)_{K}$ to denote the restriction of $\gamma_Q(n,\chi)$ to $\Z \ltimes H_1(\Sigma_{n/h}(K),\Z)$. There exists an isomorphism of representations 
  \[\gamma_{Q}(n,\chi)_{K} \cong \bigoplus_{i=1}^{h} \gamma_{K}(n/h,\chi_{i},\xi_{m}^{\chi_{i}(q_Q(\mu_K^{-w}\eta))},w/h).\]
\end{proposition}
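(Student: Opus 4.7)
The plan is to recognize the statement as a direct application of Mackey's decomposition theorem (Theorem~\ref{thm:mackey-thm}) to the setup that was already isolated in Remark~\ref{rem:MackeyMotivation} and Lemma~\ref{lem:DoubleCosets}. Specifically, I would take $G = \Z \ltimes H_1(L_n(P(K,\eta));\Z)$, $H_2 = n\Z \times H_1(L_n(P(K,\eta));\Z)$ (on which $\rho_Q(n,\chi)$ is defined) and $H_1 = \operatorname{Im}(\iota_n) \cong \Z \ltimes H_1(L_{n/h}(K);\Z)$. The starting point is the identity $\gamma_Q(n,\chi) \cong \ind_{H_2}^{G} \rho_Q(n,\chi)$ of Remark~\ref{rem:InductionFunctor}; by definition, $\gamma_Q(n,\chi)_K$ is obtained from $\gamma_Q(n,\chi)$ by restricting to $H_1$.

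Next, I would feed the four items of Lemma~\ref{lem:DoubleCosets} into Mackey's formula. Item (1) reduces the sum on the right-hand side of Theorem~\ref{thm:mackey-thm} to a sum over the representatives $a_i = (t_Q^{i},0)$ for $1 \leq i \leq h$. Item (2) says $H_2$ is normal in $G$, so $a_i H_2 a_i^{-1} \cap H_1 = H_2 \cap H_1$; moreover $H_2 \cap H_1$ coincides with the image under $\iota_n$ of $(n/h)\Z \times H_1(L_{n/h}(K);\Z)$. Item (3) identifies ${}^{a_i}\rho_Q(n,\chi)$ with $\rho_Q(n,\chi_i,1,1)$, and item (4), applied with $\theta=1$ and $\ell=1$, then identifies the further restriction $\res^{H_2}_{H_2 \cap H_1}\rho_Q(n,\chi_i,1,1)$ with $\rho_K(n/h,\chi_i,\xi_m^{\chi_i(q_Q(\mu_K^{-w}\eta))},w/h)$. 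Putting these three reductions together, Mackey's formula rewrites as
\[
\gamma_Q(n,\chi)_K \;\cong\; \bigoplus_{i=1}^{h} \ind^{H_1}_{H_2 \cap H_1} \rho_K\!\bigl(n/h,\chi_i,\xi_m^{\chi_i(q_Q(\mu_K^{-w}\eta))},w/h\bigr).
\]

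The last step is to observe that the induction formula $\ind^{H}_{(n/h)\Z \times H_1(L_{n/h}(K);\Z)} \rho_K(n/h,\chi,\theta,\ell) \cong \gamma_K(n/h,\chi,\theta,\ell)$ holds with the generalized parameters~$\theta,\ell$. This is the straightforward extension of Remark~\ref{rem:InductionFunctor}: the proof is formally identical, since the matrix appearing in~\eqref{eq:representation-beta} differs from $A_n(t)$ of Subsection~\ref{sub:Metabelian} only by substituting $\theta t^\ell$ for $t$ in the bottom-left entry, and this substitution commutes with the Shapiro-style identification used in~\cite{HeraldKirkLivingston}. Applying this to each summand concludes the proof.

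The only non-routine aspect is bookkeeping: one must carry the parameters $(\theta,\ell)$ through the chain \emph{induce, conjugate, restrict, re-induce} without error, and confirm that the parameters $\theta^{w/h}\xi_m^{\chi(q_Q(\mu_K^{-w}\eta))}$ and $\ell w/h$ produced by Lemma~\ref{lem:DoubleCosets}(4) (with $\theta=\ell=1$) are exactly those that appear in the target representation $\gamma_K(n/h,\chi_i,\xi_m^{\chi_i(q_Q(\mu_K^{-w}\eta))},w/h)$. I expect this parameter-tracking, rather than any conceptual difficulty, to be the main place where care is needed.
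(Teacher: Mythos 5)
Your proposal follows the paper's proof essentially verbatim: identify $\gamma_Q(n,\chi)_K = \res^G_{H_1}\ind^G_{H_2}\rho_Q(n,\chi)$ via Remark~\ref{rem:MackeyMotivation}, apply Theorem~\ref{thm:mackey-thm} while discharging the hypotheses with the four items of Lemma~\ref{lem:DoubleCosets}, and finish with the observation that the induction formula $\ind \rho_K(n/h,\chi,\theta,\ell) \cong \gamma_K(n/h,\chi,\theta,\ell)$ extends to arbitrary parameters. The only cosmetic difference is that you specialize $\theta=\ell=1$ at the outset while the paper carries the parameters symbolically and specializes at the end; otherwise this is the paper's argument.
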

\begin{proof}
  Since we saw in Remark~\ref{rem:MackeyMotivation} that $\gamma(n,\chi)_{K} = \res^{G}_{H_1} \ind^{G}_{H_2} \rho_{Q}(n,\chi),$ our goal is to apply Mackey's formula to $\rho_Q(n,\chi)=\rho_Q(n,\chi,1,1)$. For the remainder of the proof, we write $\rho$ as a shorthand for $\rho_Q(n,\chi,\theta,\chi)$.
  Using consecutively Theorem~\ref{thm:mackey-thm} (as well as the first item of Lemma~\ref{lem:DoubleCosets}), and then the second, third and fourth items of  Lemma~\ref{lem:DoubleCosets}, we obtain
  \begin{align*}
    \res^{G}_{H_{1}} \ind^{G}_{H_{2}}(\rho)
    &= \bigoplus_{i=1}^{h} \ind^{H_1}_{H_2 \cap H_1} \res^{H_2}_{H_2 \cap H_{1}} ({}^{t_Q^i} \rho)   \\
    &= \bigoplus_{i=1}^{h} \ind^{H_1}_{H_2 \cap H_1} \res^{H_2}_{H_2 \cap H_{1}} (\rho_{Q}(n,\chi_{i},\theta,\ell))   \\
    &= \bigoplus_{i=1}^{h}  \ind^{H_1}_{H_2 \cap H_1}\rho_{K}(n/h,\chi_i,\theta^{w/h} \xi_{m}^{\chi_i(q_Q(\mu_K^{-w}\eta))},\ell w/h)).
  \end{align*}
  Since $H_2 \cap H_1=\im \left(\iota_{n}|_{(n/h)\Z \times H_{1}(\Sigma_{n/h}(K);\Z)}\right)$, observe that   $\ind_{H_1 \cap H_2}^G \rho_K(n,\chi,\theta,\ell)=\gamma_K(n,\chi,\theta,\ell)$. Additionally, recall that $\gamma_Q(n,\chi)_{K} = \res^{G}_{H_1} \ind^{G}_{H_2} \rho_{Q}(n,\chi,1,1)$ and $\gamma_Q(n,\chi)=\gamma_Q(n,\chi,1,1)$. Therefore, taking $\theta=1$ and $\ell=1$ in the previous computation, we obtain
  \begin{align*}
    \gamma_Q(n,\chi)_{K} 
    =\res^{G}_{H_{1}} \ind^{G}_{H_{2}}(\rho_Q(n,\chi))
    =\bigoplus_{i=1}^{h} \gamma_{K}(n/h,\chi_{i},\xi_{m}^{\chi_{i}(q_Q(\mu_K^{-w}\eta))},w/h).
  \end{align*}
  This concludes the proof of the proposition.
\end{proof}


Next, we return to Blanchfield pairings. Namely, we discuss the effect of the representation~$\gamma_Q(n,\chi,\theta,\ell)$ on the variable $t$ of the metabelian Blanchfield pairing.

\begin{remark}
  \label{rem:BlanchfieldChangeVariable}
  Recall from Subsection~\ref{sub:Metabelian} that for any knot $J$, the representation $\alpha_J(n,\chi)$ is obtained by precomposing the representation $\gamma_J(n,\chi) \colon \Z \ltimes H_1(\Sigma_n(J);\Z) \to GL_n(\LC)$ with the map $\widetilde{\rho}_J \colon \pi_1(M_J) \to  \Z \ltimes H_1(\Sigma_n(J);\Z)$. We shall adopt the same convention for $\gamma_J(n,\chi,\theta,\ell)$, thus obtaining a representation $\alpha_J(n,\chi,\theta,\ell)$ of $\pi_1(M_J)$. Since $\gamma_J(n,\chi,\theta,\ell)$ can be obtained from~\(\gamma_J(n,\chi)\) via the substitution~\(t \mapsto \theta t^{\ell}\), it follows that $\Bl_{\alpha(n,\chi,\theta,\ell)}(J)(t)=\Bl_{\alpha(n,\chi)}(J)(\theta t^\ell)$.
\end{remark}

Returning to satellite knots, recall from Remark~\ref{rem:MackeyMotivation} that the Blanchfield pairing~$\Bl_{\alpha(n,\chi)_K}(K)$ is obtained by precomposing the representation $\gamma_Q(n,\chi)_K \colon \Z \ltimes H_1(\Sigma_{n/h}(K);\Z) \to GL_n(\LC)$ with the map $\widetilde{\rho}_{K,Q} \colon \pi_1(M_K) \to  \Z \ltimes H_1(\Sigma_{n/h}(K);\Z)$. Thus, applying Proposition~\ref{prop:MackeyApplication} and Remark~\ref{rem:BlanchfieldChangeVariable}, we obtain the following isometry of linking forms:
\begin{equation}
  \label{eq:MackeyBlanchfield}
  \Bl_{\alpha(n,\chi)_K}(K)(t)
  \cong  \bigoplus_{i=1}^{h} \Bl_{\alpha_K(n/h,\chi)}(K)(\chi_{i}(q_Q(\mu_K^{-w}\eta))t^{w/h}).
\end{equation}

Using this isometry,  we can now prove Theorem~\ref{thm:metabelian-cabling-formula}.

\begin{proof}[Proof of Theorem~\ref{thm:metabelian-cabling-formula}]
  First of all, note that Lemma~\ref{lemma:metabelian-eta-regularity} ensures that $\alpha(n,\chi)$ is $\eta$-regular. 
  We~can therefore apply the satellite formula of Theorem~\ref{thm:CablingTheorem} to each of the cases which we shall now~distinguish.

  We first assume that $w \neq~0$. As $\alpha(n,\chi)$ is $\eta$-regular, it restricts to representations $\alpha(n,\chi)_K$ and $\alpha(n,\chi)_P$ on $\pi_1(M_K)$ and~$\pi_1(M_P)$. We consider two cases.

  \emph{Case 1.}  If \(w\) is divisible by \(n\), then $\alpha(n,\chi)_K$ is abelian and is isomorphic to $\bigoplus_{i=1}^n \xi_m^{\chi_i} \otimes~\phi_K^{w/n}$, where $\xi_m^{\chi_i}$ is understood as the character mapping the meridian $\mu_K$ to $\xi_m^{\chi_i(q_Q(\mu_Q^{-w}\eta))}$. As a consequence,~\eqref{eq:MackeyBlanchfield} and Corollary~\ref{cor:cabling-formula-abelian} provide the desired isometry of linking forms:
  \[\Bl_{\alpha(n,\chi)}(P(K,\eta)) \cong \Bl_{\alpha(n,\chi_P)}(P) \oplus \bigoplus_{i=1}^{n} \Bl(K)(\xi_m^{\chi_{i}(q_Q(\mu_K^{-w}\eta))}t^{w/n}).\]
  
  \emph{Case 2.} If \(w\) is not divisible by \(n\), then~\eqref{eq:MackeyBlanchfield} and Theorem~\ref{thm:CablingTheorem} imply that 
  \(\Bl_{\alpha(n,\chi)}(P(K,\eta))\)
  is Witt equivalent to
  \[\Bl_{\alpha(n,\chi_{P})}(P) \oplus \bigoplus_{i=1}^{h} \Bl_{\alpha_K(n/h,\chi_{i})}(K)(\xi_m^{\chi_{i}(q_Q(\mu_K^{-w}\eta))}t^{w/h}).\]
  Next, we assume that \(w=0\).
  As the abelianization map $\phi \colon H_1(M_{P(K,\eta)};\Z) \to \Z$ restricts to the zero map on $H_1(M_K;\Z)$, it follows that $\alpha(n,\chi)_K$ is abelian.
  More precisely, \(\alpha(n,\chi)_{K} \cong \bigoplus_{i=1}^{n} \xi_m^{\chi_i}\), where $\xi_m^{\chi_i}$
  is understood as the character mapping $\mu_K$ to $\xi_m^{\chi_i(q_Q(\eta))}$.
  As a consequence, we obtain the isomorphism
  $$H_{\ast}(M_{K};\LC^{n}_{\alpha(n,\chi)}) \cong \bigoplus_{i=1}^n H_{1}(M_{K};\LC_{\xi_m^{\chi_i}})\cong  \bigoplus_{i=1}^n H_{1}(M_{K};\C_{\xi_m^{\chi_i}}) \otimes_\C\LC. $$
  The dimension of $H_{1}(M_{K};\C_{\xi_m^{\chi_i}}) $ is equal to $\eta_K(\xi_m^{\chi_i(q_Q(\eta))})$, the nullity of $K$ evaluated at the unit complex number $\xi_m^{\chi_i(q_Q(\eta))}=\xi_m^{\chi(t_Q^{i-1}q_Q(\eta))}$.
  This value is non-zero if and only if the Alexander polynomial satisfies $\Delta_K(\xi_m^{\chi(t_Q^{i-1}q_Q(\eta))})=0$.
  Since the character $\chi$ has prime power order; this can never happen~\cite[proof of Proposition 3.3]{FriedlEta} and we therefore deduce that  $H_{\ast}(M_{K};\LC^{n}_{\alpha(n,\chi)})=~0.$ Applying Corollary~\ref{cor:cabling-formula-abelian} now provides the desired isometry :
  $$\Bl_{\alpha(n,\chi)}(P(K,\eta)) \cong \Bl_{\alpha(n,\chi_{P})}(P).$$
  This concludes the proof of the theorem.
\end{proof}

\appendix
\section{Shapiro's lemma and twisted Blanchfield forms}
\label{sec:Appendix}

The goal of this appendix is to prove Corollary~\ref{cor:BlanchfieldDownstairs} which relates the Blanchfield form of the cover $\Bl_{\alpha \times \chi}(M_n)$ to the metabelian Blanchfield form~$\Bl_{\alpha(n,\chi)}(K)$.
Since the proof is somewhat technical, we will first need to collect a certain number of preliminaries.
Namely in Subsections~\ref{sub:BocksteinAppendix}, ~\ref{sub:EvaluationAppendix} and~\ref{sub:InducedCoinduced}, we respectively recall some facts about Bockstein homomorphisms, evaluation maps in twisted (co)homology,  and (co)induced modules.
Finally,  Subsection~\ref{sub:BlanchfieldAppendix} proves Proposition~\ref{prop:Shapiros-lemma-bl-forms} which is a generalisation of Corollary~\ref{cor:BlanchfieldDownstairs}.

\subsection{Twisted homology: functoriality and Bocksteins.}
\label{sub:BocksteinAppendix}

First, we briefly recall the sense in which twisted (co)homology is functorial. 
We then prove a lemma about the functoriality of Bockstein homomorphisms in twisted homology.

\begin{construction}
\label{cons:FunctorialityTwisted}
Fix a ring $R$,  path-connected CW complexes $X,Y$,  a $(R,\Z[\pi_1(X)])$-bimodule~$M$ and an $(R,\Z[\pi_1(Y)])$-bimodule~$N$. 
\begin{itemize}
\item Given a map $f \colon X \to Y$,  endow $N$ with the right $\Z[\pi_1(X)]$-module structure obtained by pulling back the right $\Z[\pi_1(Y)]$-module structure via the induced map \(f_{\ast} \colon \pi_{1}(X) \to \pi_{1}(Y)\).
Now $f$ and a $(R,\Z[\pi_1(X)])$-bimodule map $\alpha \colon M \to N$ induce an $R$-linear map
$$ (f,\alpha)_* \colon H_*(X;M) \to H_*(Y;N).$$ 
To see this one verifies that $\alpha$ and $f$ induce well defined chain maps
\[M \otimes_{\Z[\pi_{1}](X)} C_{\ast}(\widetilde{X}) \xrightarrow{\alpha \otimes \op{id}} N \otimes_{\Z[\pi_{1}(X)]} C_{\ast}(\widetilde{X}) \xrightarrow{\op{id} \otimes \widetilde{f}} N \otimes_{\Z[\pi_{1}(Y)]} C_{\ast}(\widetilde{Y}),\]
where \(\widetilde{f} \colon \widetilde{X} \to \widetilde{Y}\) is the lift of \(f\) to universal covers.
\item 
In a similar manner, if \(\alpha \colon N \to M\) is a homomorphism of \((R,\Z[\pi_{1}(X)])\)-bimodules, we obtain an \(R\)-linear map on cohomology
\[(f,\alpha)^{\ast} \colon H^{\ast}(Y;N) \to H^{\ast}(X,M)\]
induced by the composition of cochain maps
\begin{align*}
 \Hom_{\text{right-}\Z[\pi_1(Y)]}(C_*(\widetilde{Y})^\#,N) & \xrightarrow{\hom(\widetilde{f},\op{id}_{N})}  \Hom_{\text{right-}\Z[\pi_1(X)]}(C_*(\widetilde{X})^\#,N) \\ & \xrightarrow{\hom(\op{id},\alpha)} \Hom_{\text{right-}\Z[\pi_1(X)]}(C_*(\widetilde{X})^\#,M).
 \end{align*}
\end{itemize}
\end{construction}

The definition of the Blanchfield form involves a Bockstein homomorphism.
It should therefore not surprise the reader that to relate the Blanchfield pairing of the cover to the metabelian Blanchfield form, we  need to know how Bocksteins interact with induced maps.
To this effect,  recall that if we are given an exact sequence
\[0 \to M_{1} \to M_{2} \to M_{3} \to 0\]
 of \((R,\Z[\pi_{1}(X)])\)-bimodules, then, for any \(k \geq 0\), there are associated Bockstein homomorphisms
\[BS \colon H_{k+1}(X;M_{3}) \to H_{k}(X;M_{1}) \quad \text{and} \quad BS \colon H^{k}(X;M_{3}) \to H^{k+1}(X;M_{1}).\]
The following lemma shows that these Bockstein homomorphisms are natural.
\begin{lemma}\label{lemma:Bockstein-naturality}
  Let \(f \colon X \to Y\) be a 
   map of path-connected CW complexes.
  Suppose that we are given a commutative diagram
  \begin{equation}
    \label{eq:Bockstein-naturality}
    \begin{tikzcd}
      0 \ar[r] & M_{1} \ar[r] & M_{2} \ar[r] & M_{3} \ar[r] & 0 \\
      0 \ar[r] & N_{1} \ar[r] \ar[u, "{\alpha_{1}}"] & N_{2} \ar[r] \ar[u, "{\alpha_2}"] & N_{3} \ar[r] \ar[u, "\alpha_{3}"] & 0
    \end{tikzcd}
  \end{equation}
  where the first row is an exact sequence of \((R,\Z[\pi_{1}(X)])\)-bimodules, the second row is an exact sequence of \((R,\Z[\pi_{1}(Y)])\)-bimodules and the maps \(\alpha_{i}\), for \(i=1,2,3\), are homomorphisms of
  \((R,\Z[\pi_{1}(X)])\)-bimodules.
Recall that \(N_{i}\), for \(i=1,2,3\), is equipped with a right action of \(\Z[\pi_{1}(X)]\) by pulling back the right action of \(\Z[\pi_{1}(Y)]\) via the induced map \(f_{\ast} \colon \pi_{1}(X) \to \pi_{1}(Y)\).

  For any \(k \geq 0\), the following diagram is commutative
  \begin{center}
    \begin{tikzcd}
      H^{k}(X;M_{3}) \ar[r, "{BS}"] & H^{k+1}(X;M_{1}) \\
      H^{k}(Y;N_{3}) \ar[r, "{BS}"] \ar[u, "{(f,\alpha_3)^{\ast}}"] & H^{k+1}(Y,N_{1}), \ar[u, "{(f,\alpha_1)^{\ast}}"]
    \end{tikzcd}
  \end{center}
  where the maps denoted by \(BS\) are the respective Bockstein homomorphisms associated to the exact sequences in~\eqref{eq:Bockstein-naturality}.
\end{lemma}
\begin{proof}
  The reader can readily verify that the following diagram of chain complexes of left $R$-modules is commutative:
  \begin{center}
    \begin{tikzcd}
      0 \ar[r] & C^{\ast}(X;M_{1}) \ar[r] & C^{\ast}(X,M_{2}) \ar[r] & C^{\ast}(X;M_3) \ar[r] & 0 \\
      0 \ar[r] & C^{\ast}(Y;N_{1}) \ar[r] \ar[u, "{(f,\alpha_1)}"] & C^{\ast}(Y,N_{2}) \ar[r] \ar[u, "{(f,\alpha_2)}"] & C^{\ast}(Y;N_3) \ar[r] \ar[u, "{(f,\alpha_3)}"] & 0.
    \end{tikzcd}
  \end{center}
  The lemma now follows by taking induced maps on (co)homology.
\end{proof}
  
\subsection{The evaluation map: detailed definition and naturality}
\label{sub:EvaluationAppendix}

We recall some details regarding the evaluation map $\ev$ on twisted cohomology that was mentioned in Subsection~\ref{sub:TwistedHomology}.
We then prove that this evaluation map is natural.
\medbreak

Fix a CW pair $(X,A)$ and a ring $R$ that is endowed with an involution $x \mapsto x^\#$.
Let $M$ and $M'$ be $(R,\Z[\pi_1(X)])$-bimodules and let $S$ be an $(R,R)$-bimodule.
Furthermore, let $\langle -,-\rangle \colon M \times M' \to S$ be a non-singular
$\pi_1(X)$-invariant sesquilinear pairing.
\begin{construction}
\label{cons:Evaluation}
We construct the evaluation $\ev \colon H^i(X,A;M) \to \makeithash{\Hom_{\text{left-}R}(H_{i}(X,A;M'),S)}$.

The evaluation map can be defined in two steps. 
  Firstly,  the non-singularity of the pairing~$\langle-,-\rangle$ implies that the following map is an isomorphism of cochain complexes of left $R$-modules:
  \begin{align*}
    \kappa \colon \Hom_{\text{right-}\Z[\pi_{1}(X)]}(\makeithash{C_{\ast}(X,A)},M) &\to \makeithash{\Hom_{\text{left-}R}(M' \otimes C_{\ast}(X,A),S)}, \\
    f &\mapsto \left((m' \otimes \sigma) \mapsto \langle m',f(\sigma) \rangle \right).
  \end{align*}
  Secondly,  evaluating cocycles on homology classes yields a homomorphism of left $R$-modules
  \[E \colon H^{i}(\makeithash{\Hom_{\text{left-}R}(M' \otimes C_{\ast}(X,A),S)}) \xrightarrow{} \makeithash{\Hom_{\text{left-}R}(H_{i}(X,A;M'),S)}.\]
  Finally,  the evaluation map $\ev$, which is also left $R$-linear,  is obtained by composing $\kappa$ with $E$:
  \[ \ev \colon H^{i}(X,A;M) \xrightarrow{\kappa,\cong} H^{i}(\makeithash{\Hom_{\text{left-}R}(M' \otimes C_{\ast}(X,A),S)}) \xrightarrow{E} \makeithash{\Hom_{\text{left-}R}(H_{i}(X,A;M'),S)}.\]
Note that $E$ is the edge homomorphism in the universal coefficient spectral squence.
\end{construction}

We now prove a naturality statement for this evaluation map.
While this result also holds for pairs, we only state it for absolute homology since this is all we require in the sequel.

\begin{lemma}\label{lemma:eval-map-naturality}
  Let \(X,Y\) be path-connected CW-complexes.
  Let \(M\), \(M'\) be \((R,\Z[\pi_{1}(X)])\)-bimodules, let  \(N\), \(N'\) be
\((R,\Z[\pi_{1}(Y)])\)-bimodules and let $S,T$ be $(R,R)$-bimodules.
    Suppose also that we are given non-singular $\pi_1(X)$-equivariant (resp. $\pi_1(Y)$-equivariant) sesquilinear pairings
    \[\langle -,- \rangle_{X} \colon M' \times M \to S, \quad \langle -,- \rangle_{Y} \colon N' \times N \to T.\]
Recall that we can equip \(N\) and \(N'\) with a right action of \(\Z[\pi_{1}(X)]\) by pulling back the right \(\Z[\pi_{1}(Y)]\)-action by the induced homomorphism \(f_{\ast} \colon \pi_{1}(X) \to \pi_{1}(Y)\).
If there are \((R,\Z[\pi_{1}(X)])\)-linear maps $\alpha \colon N \to M$ and $ \alpha' \colon M' \to N' $
    as well as an \((R,R)\)-linear map \(\beta \colon T \to S\) such that for all \(m' \in M'\) and \(n \in N\)
    \[\langle m', \alpha(n)  \rangle_{X} = \beta(\langle\alpha'(m'),n  \rangle_{Y}),\]
    then, the following diagram of left $R$-modules commutes:
  \begin{center}
    \begin{tikzcd}
      H^{k}(X;M) \ar[r, "ev"] &\makeithash{ \Hom_{\text{left-}R}(H_{k}(X;M'),S)} \\
      H^{k}(Y;N) \ar[r, "ev"] \ar[u, "{(f,\alpha)^{\ast}}"] & \makeithash{\Hom_{\text{left-}R}(H_{k}(Y,N'),T)}. \ar[u, "{\Hom((f,\alpha')_{\ast},\beta)}"]
    \end{tikzcd}
  \end{center}
\end{lemma}
\begin{proof}
Naturality of the universal coefficient spectral sequence implies that it is sufficient to prove that the following diagram of cochain complexes of left $R$-modules is commutative:
  \begin{center}
    \begin{tikzcd}
      \Hom_{\text{right-}\Z[\pi_1(X)]}(\makeithash{C_{\ast}(\widetilde{X})},M) \ar[r, "\kappa"] &      \makeithash{ \Hom_{\text{left-}R}(M' \otimes_{\Z[\pi_1(X)]} C_{\ast}(\widetilde{X}), S)} \\
\Hom_{\text{right-}\Z[\pi_1(Y)]}(\makeithash{C_{\ast}(\widetilde{Y})},N) \ar[r, "\kappa"] \ar[u, "{(f,\alpha)}"] & \makeithash{\Hom_{\text{left-}R}(N' \otimes_{\Z[\pi_1(Y)]} C_{\ast}(\widetilde{Y}),T)}. \ar[u, "{\Hom((f,\alpha'),\beta)}"]
    \end{tikzcd}
  \end{center}
Starting with a \(\phi \in \Hom_{\text{right-}\Z[\pi_1(Y)]}(\makeithash{C_{\ast}(\widetilde{Y})},N)\) in the lower left corner of this diagram and going up,  we have \((f,\alpha)(\phi) = \alpha \circ \phi \circ f\).
    Similarly,  if we start with \(\phi \in \makeithash{\Hom_{\text{left-}R}(N' \otimes_{\Z[\pi_1(Y)]} C_{\ast}(\widetilde{Y}),T)}\) in the lower right corner and go up,  then we obtain
    \[\Hom((f,\alpha'),\beta)(\phi) = \beta \circ \phi \circ (f \otimes \alpha').\]
 With these observations at hand,  starting with \(\phi \in \Hom_{\text{right-}\Z[\pi_1(Y)]}(\makeithash{C_{\ast}(\widetilde{Y})},N)\) in the lower left corner, the up-right and right-up paths of the diagram respectively yield
    \begin{align*}
      (\kappa \circ (f,\alpha))(\phi)(m' \otimes \sigma) &= \langle m', (\alpha \circ \phi \circ f)(\sigma) \rangle_{X}, \\
      (\Hom((f,\alpha'),\beta) \circ \kappa)(\phi)(m' \otimes \sigma) &= \beta \left( \langle \alpha'(m'), (\phi \circ f)(\sigma)  \rangle_{Y} \right),
    \end{align*}
for all \(m' \in M'\) and all \(\sigma \in C_{\ast}(\widetilde{X})\).
Using the hypothesis of the lemma, both quantities are equal, and hence the required commutativity follows.
This concludes the proof of of the lemma.
\end{proof}

\subsection{Induced and coinduced representations}
\label{sub:InducedCoinduced}

We now need to introduce one last ingredient before delving into the proof of Corollary~\ref{cor:BlanchfieldDownstairs}.
Namely, we recall some basic facts about the induction and coinduction functors as these will play a role in expliciting the isometry between the Blanchfield form in the cover and the metabelian Blanchfield form.
As in the previous sections we fix a ring \(R\) with involution.
  
 \begin{definition}
 \label{def:Induced}
  Let \(G\) be a group and let \(H\) be a subgroup of \(G\).
  For any \((R,\Z[H])\)-bimodule~\(M\),  the \emph{induced \((R,\Z[G])\)-bimodule} refers to the $(R,\Z[G])$-bimodule
  \[\op{Ind}^{G}_{H}(M) := M \otimes_{\Z[H]} \Z[G].\]
  Similarly,  the \emph{coinduced \((R,\Z[G])\)-bimodule} refers to
  \[\op{Coind}^{G}_{H}(M) := \Hom_{\text{right-}\Z[H]}(\Z[G],M).\]
  The structure of a \((R,\Z[G])\)-bimodule on \(\op{Coind}^{G}_{H}(M)\) is given as follows:
for \(r \in R\), \(f \in \op{Coind}^{G}_{H}(M)\), and \(x,y \in \Z[G]\),  
  \begin{equation}
    \label{eq:action-on-coind}
    (r \cdot f \cdot x)(y) := r \cdot f(xy).
  \end{equation}

\end{definition}

\begin{example}\label{ex:induced-representation}
Consider a group \(G\) and a subgroup \(H \leq G\) of finite index \(n\).
Given a commutative ring $R$,
we use induction to explain how a representation $\beta \colon H \to GL_d(R)$ induces a representation 
$$\alpha:=\op{Ind}^{G}_{H}(\beta) \colon G \to GL_{dn}(R).$$
First some set-up.
Let \(g_{1}, g_{2}, \ldots, g_{n}\) be representatives of cosets \(H \backslash G\).
  As a left \(\Z[H]\)-module, \(\Z[G]\) is a free module of rank \(n\) and admits a decomposition
  \begin{equation}
    \label{eq:decomposition-ZG-as-ZH-mod}
    \Z[G] = \bigoplus_{i=1}^{n} \Z[Hg_{i}],
  \end{equation}
  hence, the set \( \lbrace 1_{G}=g_{1},g_{2},\ldots,g_{n} \rbrace  \) is a \(\Z[H]\)-basis of \(\Z[G]\) treated as a left \(\Z[H]\)-module.
  Observe that the above decomposition does not depend on the particular choice of coset representatives.

Next we describe the representation $\op{Ind}^{G}_{H}(\beta)$.
  The representation \(\beta\) equips the free \(R\)-module~\(R^{d}\) with the right action from \(H\), hence it becomes a \((R,\Z[H])\)-bimodule.
  Using~\eqref{eq:decomposition-ZG-as-ZH-mod},  the induced \((R,\Z[G])\)-bimodule \(\op{Ind}^{G}_{H}(R^{d}_{\beta})\) can be decomposed as
\begin{equation}
\label{eq:DirectSumInduced}
\op{Ind}^{G}_{H}(R^{d}_{\beta}) = R^{d}_{\beta} \otimes_{\Z[H]} \Z[G] = \bigoplus_{i=1}^{n} R^{d}_{\beta} \otimes_{\Z[H]} \Z[Hg_{i}].
  \end{equation}
  In order to shorten the notation,  we  set \(R^{d}_{\beta} \otimes g_{i} := R^{d}_{\beta} \otimes_{\Z[H]} \Z[Hg_{i}]\).
Note that as a left \(R\)-module, \(\op{Ind}^{G}_{H}(R^{d}_{\beta})\) is free of rank \(nd\).
  Since \(\op{Ind}^{G}_{H}(R^{d}_{\beta}) \cong R^{dn}\) is a right $\Z[G]$-module,  we therefore obtain the desired \emph{induced representation} $\alpha := \op{Ind}^{G}_{H}(\beta) \colon G \to GL_{dn}(R).$
\end{example}

\begin{construction}
\label{cons:Phibar}
Given a group \(G\),  a finite index subgroup \(H \leq G\) and a \((R,\Z[H])\)-bimodule~\(M\),  we construct a $(R,\Z[G])$-linear map $\overline{\phi} \colon \op{Ind}^{G}_{H}(M) \to \op{Coind}^{G}_{H}(M)$.

First, we consider the following homomorphism of \((R,\Z[H])\)-bimodules
  \[\phi \colon M \to \op{Coind}^{G}_{H}(M), \quad \phi(m)(g) =
    \begin{cases}
      m \cdot g, & g \in H, \\
      0, & g \not\in H,
    \end{cases}
  \]
where \(m \in M\) and \(g \in G\).
  We can uniquely extend \(\phi\) to a homomorphism of \((R,\Z[G])\)-bimodules 
  \begin{equation}
    \label{eq:nat-isomorphism-ind-coind}
    \overline{\phi} \colon \op{Ind}^{G}_{H}(M) \to \op{Coind}^{G}_{H}(M), \quad \overline{\phi}(m \otimes g)(g') = (\phi(m) \cdot g)(g') = \phi(m)(g \cdot g').
  \end{equation}
The reader can check that \(\overline{\phi}\) is natural in \(M\), i.e., it is a component of a natural transformation of the induction and coinduction functors.
\end{construction}

The map \(\overline{\phi}\) is known to be an isomorphism of \((R,\Z[G])\)-bimodules~\cite[Proposition III.5.9]{brownCohomologyGroups1982},  but we describe the inverse explicitly when $G$ is a finite group as we will need it in the
sequel.

\begin{lemma}
\label{lemma:ind-coind-isom}
Given a group \(G\),  a finite index subgroup \(H \leq G\) and a \((R,\Z[H])\)-bimodule \(M\),  the $(R,\Z[G])$-linear map $\overline{\phi}$ from Construction~\ref{cons:Phibar} defines a natural isomorphism of $(R,\Z[G])$-bimodules:
  \[ \overline{\phi} \colon \op{Ind}^{G}_{H}(M) \xrightarrow{\cong} \op{Coind}^{G}_{H}(M).\]
\end{lemma}
\begin{proof}
The reader can check that the inverse of \(\overline{\phi}\) is given by the formula
  \begin{equation}
    \label{eq:inv-nat-isomorphism-ind-coind}
    \overline{\psi} \colon \op{Coind}^{G}_{H}(M) \to \op{Ind}^{G}_{H}(M), \quad \overline{\psi}(f) = \sum_{g \in H \backslash G} f(g^{-1}) \otimes g.
  \end{equation}
  Here, the notation \(g \in H \backslash G\) means that we are summing over coset representatives of \(H \backslash G\).
  One readily verifies that \(\overline{\psi}\) does not depend on the specific choice of representatives.
\end{proof}

Next, we introduce the maps that we will use to construct the isometry between the Blanchfield form of the cover and the metabelian Blanchfield form.
Fix a \((R,\Z[H])\)-bimodule \(M\) and consider the following maps of \((R,\Z[H])\)-bimodules:
  \begin{align}
    \label{eq:structure-maps-ind-coind}
    i^{G}_{H}(M) &\colon M \to \op{Ind}^{G}_{H}(M), \quad M \owns m \mapsto m \otimes 1_{G} \in \op{Ind}^{G}_{H}(M), \\
    p^{G}_{H}(M) &\colon \op{Coind}^{G}_{H}(M) \to M, \quad f \in \op{Coind}^{G}_{H}(M) \mapsto f(1). \nonumber
  \end{align}
Using the decomposition~\eqref{eq:decomposition-ZG-as-ZH-mod}, we can see that \(M\) embedds in \(\op{Ind}^{G}_{H}(M)\) as the \((R,\Z[H])\)-subbimodule \(M \otimes 1_{G}\) and \(i^{G}_{H}(M)\) denotes the inclusion.
    Here, we used the same notation for the subbimodule \(M \otimes 1_{G}\) as in Example~\ref{ex:induced-representation}.
    Dually, if we apply the functor \(\hom_{\text{right}-\Z[H]}((-),M)\) to the inclusion \(\Z[H] \subset \Z[G]\) from Example~\ref{ex:induced-representation}, we obtain the map \(p^{G}_{H}(M)\).

The next lemma uses $\overline{\phi}$ and the maps from~\eqref{eq:structure-maps-ind-coind} to make more explicit the isomorphism from Proposition~\ref{prop:HeraldKirkLivingston}. 
\begin{lemma}[Shapiro's Lemma]\label{lemma:shapiros-lemma}
Suppose that \(X\) is a path-connected CW-complex and that \(q \colon Y \to X\) is a finite-sheeted covering with \(Y\) also path-connected.
For any \((R,\Z[\pi_{1}(Y)])\)-bimodule~\(M\), the following maps are isomorphisms of left $R$-modules:
\[H_{\ast}(Y;M) \xrightarrow{(q,i^{G}_{H}(M))} H_{\ast}(X;\op{Ind}^{G}_{H}(M)), \quad H^{\ast}(X;\op{Ind}^{G}_{H}(M)) \xrightarrow{(q,p^{G}_{H}(M) \circ \overline{\psi})} H^{\ast}(Y;M).\]
Here,  the maps \(i^{G}_{H}(M)\) and \(p^{G}_{H}(M)\) were defined in~\eqref{eq:structure-maps-ind-coind}, and the map \(\overline{\psi}\) was defined in the proof of Lemma~\ref{lemma:ind-coind-isom}.
\end{lemma}
\begin{proof}
First, observe that by~\cite[Proposition III.6.2]{brownCohomologyGroups1982} and~\cite[Exercise III.8.2]{brownCohomologyGroups1982}, the maps \((q,i^{G}_{H}(M))\) and \((q,p^{G}_{H}(M))\) induce isomorphisms of
  left \(R\)-modules, hence the first isomorphism follows.
  To obtain the second isomorphism, observe that the map \((q,p^{G}_{H}(M) \circ \overline{\psi})\) can be written as a composition
  \[H^{\ast}(X;\op{Ind}^{G}_{H}(M)) \xrightarrow{(\op{id}_{X},\overline{\psi})} H^{\ast}(X;\op{Coind}^{G}_{H}(M)) \xrightarrow{(q,p^{G}_{H}(M))} H^{\ast}(Y;M).\]
  The map \((\op{id},\overline{\psi})\) is an isomorphism by Lemma~\ref{lemma:ind-coind-isom}, hence the lemma follows.
\end{proof}

Before we end this section  we make one more observation concerning the interaction between induced representations and the Hermitian pairings defined in Example~\ref{ex:UsualCoeff}.
  This observation will be used in the proof of Lemma~\ref{prop:Shapiros-lemma-bl-forms}.

We recall the set-up: we fix a commutative domain \(R\) with involution \(r \mapsto r^{\#}\) and quotient field~$Q$ with the involution extended from \(R\).
    Recall, from Example~\ref{ex:UsualCoeff}, that for any \(d>0\) there  are pairings
    \[\langle -,- \rangle_{d} \colon (Q/R \otimes_{R} R^{d}) \times R^{d} \to Q/R, \quad \langle q \otimes v, w \rangle = v w^{\# T} \cdot q.\]
We additionally fix a group \(G\),  a normal subroup \(H \leq G\) of finite index \(n\) and coset representatives~\(1=g_{1},g_{2},\ldots,g_{n}\) of \(H \backslash G\).

\begin{lemma}\label{lemma:induced-hermitian-pairing}
Fix a unitary representation \(\beta \colon H \to GL_{d}(R)\).
    Under the identification $\op{Ind}^{G}_{H}(R^{d}_{\beta}) \cong R^{nd}_{\alpha}$ from Example~\ref{ex:induced-representation}, 
the pairing \(\langle -,- \rangle_{nd}\) $\colon (Q/R \otimes_R R^{nd}_\alpha) \times   R^{nd}_\alpha \to Q/R$ is \(G\)-equivariant and furthermore, for any \(x \in (Q/R \otimes_{R} R^{d}_{\beta})\) and \(y \in R^{d}_{\beta}\), we have
    \begin{equation}
      \label{eq:induced-hermitian-pairing-formula}
      \langle (q \otimes v) \otimes g_{i},  w \otimes g_{j} \rangle_{nd} =
      \begin{cases}
        \langle q \otimes v,w \rangle_{d}, & \text{if } i = j, \\
        0, & \text{otherwise}.
      \end{cases}
    \end{equation}
\end{lemma}
\begin{proof}
We first calculate the right hand side of~\eqref{eq:induced-hermitian-pairing-formula} and, more specifically, $ \langle q \otimes v,w \rangle_{d}$.
The unitarity of \(\beta\) guarantees that the pairing \(\langle -,- \rangle_{d}\) is \(H\)-equivariant: for any \(h \in H\), any \(q \in Q/R\), and any \(v,w \in R^{d}_\beta\), we have
\begin{align}
\label{eq:PairingLemma}
  \langle (q \otimes v) \cdot h, w \cdot h \rangle_{d}   & =
    \langle (q \otimes v) \beta(h), w \beta(h) \rangle_{d} = \beta(h)(\beta(h)^\#)^T      \langle q \otimes v, w \rangle_{d} \\  \nonumber
    &= \langle q \otimes v, w \rangle_{d}=q(vw^\#)^T.
    \end{align}
 Now we will calculate the left hand side of~\eqref{eq:induced-hermitian-pairing-formula}.

Next, we assert that under the identification $\op{Ind}^{G}_{H}(R^{d}_{\beta}) \cong R^{nd}_{\alpha}$ from Example~\ref{ex:induced-representation} (specifically the direct sum decomposition from~\eqref{eq:DirectSumInduced}), we have
    \[
\langle (q \otimes v) \otimes g_i,w \otimes g_j \rangle_{nd}=
    q( v
     \otimes g_{i}) (w \otimes g_{j})^{\# T} =
      \begin{cases}
        q(vw^{\# T})& \text{if } i=j, \\
        0& \text{otherwise},
      \end{cases}
    \]
    for any \(v,w \in R^{d}\) and $q \in Q/R$.
    More precisely, this assertion follows from the fact that if \(e_{1},e_{2},\ldots,e_{d}\) denotes the standard basis vectors of \(R^{d}\),  then the standard basis of \(R^{nd}\) is given by
    \[e_{1} \otimes g_{1},e_{2} \otimes g_{1},\ldots, e_{d} \otimes g_{1}, e_{1} \otimes g_{2}, e_{2} \otimes g_{2}, \ldots, e_{d} \otimes g_{n}, \ldots, e_{1} \otimes g_{n}, e_{e} \otimes g_{n} \ldots, e_{d} \otimes g_{n}.\] 
The formula~\eqref{eq:induced-hermitian-pairing-formula} then follows from the assertion and~\eqref{eq:PairingLemma}.
We leave  it to the reader to verify that
 \(\langle -,- \rangle_{nd}\) is \(G\)-equivariant.
\end{proof}


\subsection{Proof of Corollary~\ref{cor:BlanchfieldDownstairs}. }
\label{sub:BlanchfieldAppendix}

We are now ready to prove the main statement of this appendix which will in particular imply Corollary~\ref{cor:BlanchfieldDownstairs} by taking $M=M_K$.

\begin{proposition}[Shapiro's Lemma for Blanchfield forms]
\label{prop:Shapiros-lemma-bl-forms}
Let \(M\) be a closed, oriented, connected \(3\)-manifold and let \(q \colon M_{n} \to M\) be an \(n\)-sheeted covering, where \(M_{n}\) is connected.
  Suppose that \(\beta \colon \pi_{1}(M_{n}) \to GL_{d}(\LF)\) is an acyclic representation and denote \(\alpha = \op{Ind}^{\pi_{1}(M)}_{\pi_{1}(M_n)}(\beta) \colon \pi_{1}(M) \to GL_{nd}(\LF)\), see Example~\ref{ex:induced-representation}.
  The isomorphism \[H_{1}(M_{n};\LF^{d}_{\beta}) \cong H_{1}(M;\op{Ind}^{\pi_{1}(X)}_{\pi_{1}(Y)}(\LF^{d}_{\beta})) = H_{1}(M;\LF^{dn}_{\alpha})\] coming from Shapiro's Lemma~\ref{lemma:shapiros-lemma} yields an isometry of Blanchfield forms
  \[\Bl_{\beta}(M_{n}) \cong \Bl_{\alpha}(M).\]
\end{proposition}
\begin{proof}

Throughout this proof,  we set \(G:=\pi_{1}(M)\) and \(H:= \pi_{1}(M_{n})\).
Writing $\Bl_\beta^\bullet(M_n)$ and~$\Bl_\alpha^\bullet(M)$ for the adjoints of the Blanchfield pairings in the statement of the proposition, we must  show that the following  diagram  commutes: 
  \begin{equation}
    \label{eq:Blanchfield-Shapiro-lemma-comm-diag}
    \begin{tikzcd}
      H_1(M_{n};\F[t^{\pm 1}]^{d}_{\beta}) \ar[r, "{(q,i^G_{H})}"] \ar[d, "{\Bl_\beta^\bullet(M_n)}"] & H_1(M;\F[t^{\pm 1}]^{dn}_{\alpha}) \ar[d, "{\Bl_\alpha^\bullet(M)}"] \\
      \makeithash{\Hom_{\F[t^{\pm 1}]}(H_1(M_{n};\F[t^{\pm 1}]^{d}_{\beta}),\F(t)/\F[t^{\pm 1}])} & \makeithash{\Hom_{\F[t^{\pm 1}]}(H_1(M;\F[t^{\pm 1}]^{dn}_{\alpha}),\F(t)/\F[t^{\pm 1}])}. \ar[l, "{(q,i^{G}_{H})^{\bullet}}"]
    \end{tikzcd}
  \end{equation}
 In diagram~\eqref{eq:Blanchfield-Shapiro-lemma-comm-diag}, the upper horizontal isomorphism comes from Shapiro's Lemma~\ref{lemma:shapiros-lemma} and the bottom horizontal isomorphism is defined as
    \begin{equation}
      \label{eq:dual-Shapiro-map}
      (q,i^G_{H}(\beta))^{\bullet} := \Hom_{\LF}((q,i^{G}_{H}(\beta))_{\ast},\F(t) / \LF).
  \end{equation}
    In order to prove the commutativity of the above square, we rewrite it in a different form by expanding \(\Bl_\beta^\bullet(M_n)\) and \(\Bl_\alpha^\bullet(M)\), see diagram~\eqref{eq:commutative-diagram-blanchfield-downstairs}.
    In this diagram, the first three horizontal maps are the isomorphisms coming from Shapiro's Lemma~\ref{lemma:shapiros-lemma} and in the bottom horizontal line,
    we used the notation~\eqref{eq:dual-Shapiro-map}.
    Furthermore, in diagram~\eqref{eq:commutative-diagram-blanchfield-downstairs}, we identified \(\op{Ind}^{G}_{H}(\LF^{d}_{\beta}) = \LF^{nd}_{\alpha}\), as in Example~\ref{ex:induced-representation}:

\begin{equation}
\label{eq:commutative-diagram-blanchfield-downstairs}
    \begin{tikzcd}
      H_{1}(M_{n};\LF^{d}_{\beta}) \ar[r, "{(q,i^{G}_{H})}"] \ar[d,"PD"] & H_1(M;\LF^{nd}_{\alpha}) \ar[d, "PD"] \\
      H^2(M_{n};\LF^d_{\beta}) \ar[d, "BS^{-1}"] & H^{2}(M;\LF^{nd}_{\alpha}) \ar[l, "{(q,p^{G}_{H} \circ \overline{\phi})}" above] \ar[d, "BS^{-1}"] \\
      H^{1}(M_{n};(\F(t)/\LF)^{d}_{\beta}) \ar[d, "\ev"] & H^{1}(M;(\F(t) /\LF)^{nd}_{\alpha}) \ar[l, "{(q,p^{G}_{H} \circ \overline{\phi})}" above] \ar[d, "\ev"] \\
      \makeithash{\Hom_{\LF}(H_{1}(M_{n},\LF^d_{\beta}),\F(t)/\LF)} &       \makeithash{\Hom_{\LF}(H_{1}(M;\LF^{nd}_{\alpha}), \F(t)/\LF)}. \ar[l, "{(q,i^{G}_{H})^{\bullet}}"]
    \end{tikzcd}
    \end{equation}

    
Since each of the horizontal maps in the diagram are isomorphisms, we deduce that the conclusion of the proposition is equivalent to the commutativity of the diagram in~\eqref{eq:commutative-diagram-blanchfield-downstairs}.
We will now prove that each of the squares commutes, starting with the middle square before moving on to the upper square and concluding with the bottom square.

\begin{itemize}
\item  The commutativity of the middle square is a direct consequence of the naturality of the Bockstein map from Lemma~\ref{lemma:Bockstein-naturality}.
\item We prove the commutativity of the top square.
First we observe that this is equivalent to proving that the diagram in~\eqref{eq:smaller-commutative-diagram-blanchfield-downstairs} commutes, where \([M] \in H_3(M)\) and \([M_{n}] \in H_3(M_{n})\) respectively denote the fundamental classes of \(M\) and \(M_{n}\) and  \(\overline{\psi}\) was defined in the proof Lemma~\ref{lemma:ind-coind-isom}.

\begin{equation}
\label{eq:smaller-commutative-diagram-blanchfield-downstairs}
    \begin{tikzcd}
      H_{1}(M_{n};\LF^{d}_{\beta}) \ar[r, "{(q,i^{G}_{H}(\beta))}"] & H_{1}(M;\op{Ind}^G_H(\LF^{d}_{\beta})) \\
                                                & H^2(M;\op{Ind}^G_H(\LF^{d}_{\beta})) \ar[u, "{(-) \cap [M]}" right] \\
      H^{2}(M_{n};\LF^{d}_{\beta}) \ar[uu, "{(-) \cap [M_{n}] }" left] & H^{2}(M;\op{Coind}^{G}_{H}(\LF^{d}_{\beta})) \ar[l, "{(q,p^{G}_{H})^{\ast}}"] \ar[u, "{(\op{id}_{M},\overline{\psi})}" right] \\
    \end{tikzcd}
\end{equation}

The commutativity of the diagram in~\eqref{eq:smaller-commutative-diagram-blanchfield-downstairs} will be established by a strenuous but direct computation.
Fix \(\eta \in \Hom_{\text{right-}\Z[G]}(\makeithash{C_{2}(\widetilde{M})},\op{Coind}^{G}_{H}(\F[t^{\pm1}]^{d}_{\beta}))\) representing a cohomology class in \(H^{2}(M;\op{Coind}^{G}_{H}(\LF^{d}_{\beta}))\), i.e. an element  in the bottom right entry of the diagram.
We will first compute the image of \(\eta\) under the left-up-right composition and then by the up-up composition.
\begin{itemize}
\item We calculate the image of \(\eta\) under the left-up-right composition in~\eqref{eq:smaller-commutative-diagram-blanchfield-downstairs}.
In other words, we must calculate $(q,i_H^G)\left( \left( (q,p^{G}_{H})(\eta) \right) \cap \xi_{M_{n}} \right)$ where $\xi_{M_n} \in C_3(M_n)$ is a chain representing the fundamental class of $M_n$ that we will now describe.

Indeed, we wish to pick this $\xi_{M_n}$ in terms of a chain representing the fundamental class $[M]$ of $M$.
To achieve this,  we let \(\widetilde{M}\) be the universal covering of (both) \(M\) and \(M_{n}\) and let \(1=g_{1},g_{2},\ldots,g_{n}\) be coset representatives of \(H \backslash G\); observe that \(H\) is a normal subgroup of
\(G\), hence \(g_{1},g_{2},\ldots,g_{n}\) are also coset representatives for \(G / H\).
This way, if we choose a chain \(\xi_{M} \in C_{3}(\widetilde{M})\) representing the fundamental class in \(C_{3}(M) = \Z \otimes_{\Z[G]}C_{3}(\widetilde{M})\), the chain \(\xi_{M_{n}} = \sum_{i=1}^{n} g_{i} \cdot \xi_{M}\) represents the fundamental class of \(M_{n}\) in \(C_{3}(M_{n}) = \Z \otimes_{\Z[H]} C_{3}(\widetilde{M})\). 
Using this choice of $\xi_{M_n}$, we obtain that 
$$ \left( (q,p^{G}_{H})(\eta) \right) \cap \xi_{M_{n}} = \sum_{i=1}^{n} ((q,p^{G}_{H})(\eta)) \cap (g_{i} \cdot \xi_{M}).$$
For the next step in this calculation, recall that for any \((R,\Z[G])\)-bimodule \(A\), the cap product gives a chain map
\begin{align*}
  (-) \cap (-) \colon C^{i}(\widetilde{M};A) \times (\Z \otimes_{\Z[G]} C_{j}(\widetilde{M})) &\to C_{i-j}(\widetilde{M};A), \\
  f \cap (1 \otimes \sigma) &\mapsto f(\intprod \sigma) \otimes (\intprodr \sigma),
\end{align*}
where, for \(\sigma \in C_{j}(\widetilde{M})\), \(\intprod \sigma\) and \(\intprodr \sigma\) denote the respective front face and back face of \(\sigma\), respectively.
For more details concerning the twisted cap product,  refer to~\cite[Section~2.3]{ConwayNagelFibered}.
As a consequence,  we obtain
\begin{align*}  \left( (q,p^{G}_{H})(\eta) \right) \cap \xi_{M_{n}}
 =     \sum_{i=1}^{n} ((q,p^{G}_{H})(\eta)(\intprod (g_{i} \cdot \xi_{M}))) \otimes \intprodr (g_{i} \cdot \xi_{M}).
\end{align*}
But now,  by definition of the map \(p^{G}_{H}\) (see~\eqref{eq:structure-maps-ind-coind}),  we have
$(q,p^{G}_{H})(\eta)(\sigma) = \eta(\sigma)(1)$
for any \(\sigma \in C_{2}(\widetilde{M})\).
Using this, we obtain
\begin{align*}
\left( (q,p^{G}_{H})(\eta) \right) \cap \xi_{M_{n}}
  &= 
    \sum_{i=1}^{n} ((q,p^{G}_{H})(\eta)(\intprod (g_{i} \cdot \xi_{M}))) \otimes \intprodr (g_{i} \cdot \xi_{M}) \\
  &=
    \sum_{i=1}^{n} \eta(g_{i} (\intprod \xi_{M}))(1) \otimes g_{i} \cdot (\intprodr \xi_{M}) \\
  &=
    \sum_{i=1}^{n} (\eta \cdot g_{i}^{-1})(\intprod \xi_{M})(1) \otimes (g_{i} \cdot (\intprodr \xi_{M})) \\
  &=
    \sum_{i=1}^{n} \eta(\intprod \xi_{M})(g_{i}^{-1}) \otimes (g_{i} \cdot (\intprodr \xi_{M})),
\end{align*}
where the second equality follows from our discussion of the map $(q,p_H^G)$,  the third follows from the fact that \(\eta\) is right-\(\Z[G]\)-linear and the last follows from the definition of the right action of \(\Z[G]\) on \(\op{Coind}^{G}_{H}(\LF^{d}_{\beta})\), see~\eqref{eq:action-on-coind}.
    
Finally, using the definition of $(q,i_H^G)$, we deduce that the image of \(\eta\) under the left-up-right composition is given by
\begin{equation}\label{eq:left-up-right-composition}
(q,i^{G}_{H})(\left( (q,p^{G}_{H})^{\ast}(\eta) \right) \cap \xi_{M_{n}}) = \sum_{i=1}^{n} (\eta(\intprod \xi_{M})(g_{i}^{-1}) \otimes 1_{G}) \otimes (g_{i} \cdot (\intprodr \xi_{M})).
\end{equation}
    
\item We show that the image of \(\eta\) under the up-up composition in~\eqref{eq:smaller-commutative-diagram-blanchfield-downstairs}, namely $(\id,\overline{\psi})(\eta) \cap \xi_{M}$,  agrees with~\eqref{eq:left-up-right-composition}.
  By definition of $\overline{\psi}$ (recall~\eqref{eq:inv-nat-isomorphism-ind-coind}), for any chain~\(\sigma \in C_{2}(\widetilde{M})\), we have
  \[(\id,\overline{\psi})(\eta)(\sigma) = \sum_{i=1}^{n} \eta(\sigma)(g_{i}^{-1}) \otimes g_{i}.\]
  Consequently,  using the definition of the cap product,  we deduce that the image of \(\eta\) under the up-up composition is given by
  \begin{align}
    \label{eq:up-up-composition}
(\id,\overline{\psi})(\eta) \cap \xi_{M}
    &= \sum_{i=1}^{n} (\eta(\intprod \xi_{M})(g^{-1}_{i}) \otimes g_{i}) \otimes (\intprodr \xi_{M})  \nonumber \\
    &= \sum_{i=1}^{n} (\eta(\intprod \xi_{M})(g_{i}^{-1}) \otimes 1_{G}) \otimes  g_{i} \cdot (\intprodr \xi_{M}),
  \end{align}
  where in the last equality, we used the fact that the outer tensor product is taken over the ring \(\Z[G]\).
\end{itemize}
Comparing~\eqref{eq:left-up-right-composition} with~\eqref{eq:up-up-composition} shows that the diagram in~\eqref{eq:smaller-commutative-diagram-blanchfield-downstairs} is commutative, hence the second step of the proof is finished.
 \item We prove the commutativity of the bottom square in~\eqref{eq:commutative-diagram-blanchfield-downstairs}.
 In fact,  by Lemma~\ref{lemma:eval-map-naturality} (which  concerned the naturality of evaluation maps), it suffices to prove that
\begin{equation}
  \label{eq:WantForBottomSquare}
\langle x, (p^{G}_{H} \circ \overline{\phi})(y) \rangle_{d} = \langle i^{G}_{H}(x), y  \rangle_{nd}
\end{equation}
for every $x \in (\F(t) / \LF)^{d}_{\beta}$ and every $y \in \LF^{nd}_{\alpha}$,
  where the pairings $\langle -,- \rangle_{d}$ and $\langle -,- \rangle_{nd}$ were considered in Example~\ref{lemma:induced-hermitian-pairing}.
  Our plan is to expand the expression $ \langle x, (p^{G}_{H} \circ \overline{\phi})(y) \rangle_{d}$.

As a first step, we get a better grasp on the map $(p^{G}_{H} \circ \overline{\phi})$.
For that, recall that since~\(1=g_{1}, g_{2}, \ldots , g_{n}\) are coset representative of \(H \backslash G\),  we can write as in Example~\ref{ex:induced-representation}
\[\op{Ind}^{G}_{H}(\LF^{d}_{\beta}) = \bigoplus_{i=1}^{n} \LF^{d}_{\beta} \otimes g_{i} = \LF^{nd}_{\alpha}.\]
Observe that equations~\eqref{eq:nat-isomorphism-ind-coind} and~\eqref{eq:structure-maps-ind-coind}, imply that the map \(p^{G}_{H} \circ \overline{\phi} \colon \LF^{nd}_{\alpha} \to \LF^{d}_{\beta}\) is given by the formula
  \[p^{G}_{H}(v \otimes g_{i}) =
    \begin{cases}
      v& \text{if } i=1, \\
      0& \text{otherwise},
    \end{cases}
  \]
  where \(v \in \LF^{d}\) is arbitrary.

Using the claim, we take our first step in expanding $\langle x, (p^{G}_{H} \circ \overline{\phi})(y) \rangle_{d}$.
Indeed, observe that for any \(x \in (\F(t) / \LF)^{d}_{\beta} \) and any \(y = \sum_{i=1}^{n} y_{i} \otimes g_{i} \in \LF^{nd}_{\alpha}\), the above formula implies that
\begin{equation}
\label{eq:IntermediateBracket}
\langle x, (p^{G}_{H} \circ \overline{\phi})(y) \rangle_{d} = \langle x, y_{1} \rangle_{d}. 
\end{equation}
We can expand \(\langle i^{G}_{H}(x),y \rangle_{nd}\) using Lemma~\ref{lemma:induced-hermitian-pairing}:
  \[\langle i^{G}_{H}(x),y \rangle_{nd} = \langle x \otimes g_{1}, y \rangle_{nd} = \sum_{i=1}^{n} \langle x \otimes g_{1}, y_{i} \otimes g_{i}\rangle_{nd} = \langle x \otimes g_{1}, y_{1} \otimes g_{1} \rangle_{nd} = \langle x,y_{1} \rangle_{d}.\]
  Consequently, the formula~\eqref{eq:WantForBottomSquare} is satisfied.

This was the required equality and as we mentioned above,  we can now apply Lemma~\ref{lemma:eval-map-naturality} (with $\beta=\id$,  $\alpha=p_H^G \circ \overline{\phi}$ and $\alpha'=i^G_H$)
to conclude that the bottom square in the diagram in~\eqref{eq:commutative-diagram-blanchfield-downstairs} is commutative.
 \end{itemize}
 We have therefore proved that the three squares in~\eqref{eq:commutative-diagram-blanchfield-downstairs} are commutative and as we explained at the beginning of the proof, this implies that $\Bl_{\beta}(M_{n}) \cong \Bl_{\alpha}(M)$, thus establishing the proposition.
\end{proof}

\bibliographystyle{plain}
\def\MR#1{}
\bibliography{research}
\end{document}